\newtheorem{theorem}{Theorem}[section]
\newtheorem{lemma}[theorem]{Lemma}
\newtheorem{definition}[theorem]{Definition}
\newtheorem{proposition}[theorem]{Proposition}
\newtheorem{corol}[theorem]{Corollary}
\newtheorem{remark}[theorem]{Remark}
\newcommand{\PCM}{\mbox{\bf PCM}}
\newcommand{\Comp}{{\circ}}
\newcommand{\Nat}{{\mathbb N}}
\newcommand{\CC}{{\mathcal C}}
\newcommand{\D}{{\mathcal D}}
\newcommand{\E}{{\mathcal E}}
\newcommand{\F}{{\mathcal F}}
\newcommand{\HH}{{\mathcal H}}
\newcommand{\K}{{\mathcal K}}
\newcommand{\CS}{\mbox{\bf Cat}_\Sigma}
\newcommand{\CAT}{\mbox{\bf cCat}}
\newcommand{\cat}{\mbox{\bf Cat}}
\title{A categorical analogue of the monoid semiring construction} 
\author{Peter M. Hines}
\begin{document}

\maketitle

\begin{abstract}
This paper introduces and studies a categorical analogue of the familiar monoid semiring construction. By introducing an axiomatisation of summation that unifies  notions of summation from algebraic program semantics with various notions of summation from the theory of analysis, we demonstrate that the monoid semiring construction generalises to cases where both the monoid and the semiring are categories.  This construction has many interesting and natural categorical properties, and natural computational interpretations.

\end{abstract}

\section{Introduction}\label{intro}
The monoid semiring construction -- in particular, the special case of the group ring construction --  is one of the most familiar and useful algebraic constructions. This paper places this construction within a significantly more general categorical setting, where the monoid is generalised to a category (with a suitable smallness condition) and the semiring is replaced by a category equipped with a suitable notion of partial summation on hom-sets.

\subsection{Cauchy products and monoid semirings}
In the formal theory of power series, an infinite power series over some complex variable $z$, given as 
$P=\alpha_0 + \alpha_1z+\alpha_2z^2+\ldots$ 
may be treated as simply a function $P:{\mathbb N}\rightarrow {\mathbb C}$. Given another formal power series $Q:{\Bbb N}\rightarrow {\Bbb C}$ over the same variable, their {\em convolution}, or {\em Cauchy product}, is the formal power series $(Q*P):{\Bbb N}\rightarrow {\Bbb C}$ given by $(Q*P)(n) = \sum_{n=y+x} q(y)p(x)$.

A formal power series $P:{\mathbb N}\rightarrow {\mathbb C}$ converges absolutely within the unit disk $\{ \| z\| \leq 1\}$ when the sum $\sum_{n\in {\Bbb N}} P(n)$ converges absolutely and it is a straightforward result of analysis \cite{T} that the Cauchy product of two formal power series that converge absolutely within the unit disk itself converges absolutely within the unit disk (and much more general conditions may also lead to convergence --- we again refer to \cite{T}).

When restricting formal power series to the case where only a finite number of coefficients are non-zero (i.e. polynomials in some complex variable  $z$), convergence is guaranteed not only within the unit disk, but for all $z\in {\Bbb C}$. Algebraically, this naturally generalises to the familiar theory of monoid semirings \cite{JG}.
\begin{definition}{\em Monoid semirings}\label{msr}\\
Let $(M,\cdot)$ be a monoid, and $(R,\times,+,1_R,0_R)$ be a unital semiring. The {\bf monoid semiring} $R[M]$ is the unital semiring whose elements are functions 
 $\eta :M\rightarrow R$ such that $\| \{m: \eta(m)\neq 0  \}_{m\in M} \|<\infty$.
The multiplication and addition in this semiring are given by 
\begin{itemize}
\item $(\eta \times \mu)(m)  = \sum_{m=qp} \eta(q)\mu(p)$, 
\item $(\eta + \mu)(m) = \eta(m)+\mu(m)$.
 \end{itemize}
The additive identity is the function $0(m)=0_R \ \forall m\in M$, and the multiplicative identity is the function $1(m)= \left\{ \begin{array}{lr} 1_R & m=1_M\  \\ 0_R & \mbox{otherwise.} \end{array}\right.$
When $(M,\cdot)$ is a group, $R[M]$ is called the {\bf group semiring}; similarly, when $R$ is a ring, then $R[M]$ is called the {\bf monoid (or group) ring}.
\end{definition}
This paper generalises the above construction of monoid semirings in two ways: 
\begin{itemize}
\item The finite sums of a semiring are replaced by a more general axiomatic summation of (possibly infinite) indexed families.
\item The monoids $(M,\cdot)$ and $(R,\times )$ are replaced by categories. Thus, the unital ring $R$ is replaced by a category with some appropriate notion of summation on homsets. 
\end{itemize}

\section{An axiomatic notion of summation}\label{PCM-start}
For the programme outlined above, we replace semirings with categories equipped with a partial summation on hom-sets. The overall intention of this paper is to provide a categorical analogue of the monoid semiring construction that generalises the usual theory, but is also applicable to categories used in algebraic program semantics.  As discussed in Appendix \ref{appendix}, axiomatisations of summation commonly used within algebraic program semantics have properties that rule out various analytic notions of summation such as absolute convergence of real or complex sums.

We therefore introduce a very general axiomatisation of  summation that includes, as special cases, various notions of summation from both theoretical computer science (in particular, algebraic program semantics) and analysis. By comparison with other notions of summation discussed in Appendix \ref{appendix}, this is a very weak axiomatisation --- in particular, the expressive power we require comes from both the axioms we now present, and the axioms for the interaction of summation and composition given in Section \ref{summcats}.

\begin{definition}\label{PCM}{\em Partial Commutative Monoids (PCMs)}\\ 
Given sets  $M$ and $I$, an {\bf $I$-indexed family of elements} of $M$  is defined to be a function $x:I\rightarrow M$. For simplicity, we denote this by $\{ x_i \}_{i\in I}$. Throughout this paper, we restrict ourselves to countable (i.e. either finite or denumerably infinite) indexing sets, and hence   {\bf countably indexed families}.

A {\bf partial commutative monoid} or {\bf PCM} is  a non-empty set $M$ together with a  
partial function $\Sigma$ from indexed families of $M$ to elements of $M$. An indexed family of elements of $M$ is called {\bf summable} when it is in the domain of $\Sigma$, and summation is required to satisfy the following two axioms:
\begin{enumerate}
\item The {\bf Unary Sum axiom} Any family $\{ x_i \}_{i\in I}$, where 
$I = \{i'\}$ is a singleton
set, is summable, and $\sum_{i\in I}  x_i  = x_{i'}$.
\item The {\bf Weak Partition-Associativity axiom} Let $\{x_i\}_{i\in I}$ be a 
{\em summable} family, and let $\{I_j\}_{j\in J}$ be a countable partition\footnote{Following \cite{MA}, we also allow countably many $I_j$ to be empty.} of $I$. Then 
$\{ x_i\}_{i\in I_j}$ is summable for every $j\in J$, as is $\{ \sum_{i\in I_j}x_i
\}_{j\in J}$, and 
\[ \sum_{i\in I} x_i = \sum_{j\in J} \left(\sum_{i\in I_j} x_i \right) \]
\end{enumerate}  
$ $ \\
Given a summable family $x = \{x_i\}_{i\in I}$, we may  write
$\Sigma (x)$  (unambiguously) as $\sum_{i\in I}x_i$.  In particular, 
if $I = \{1, \ldots , n\}$, we  write $\Sigma (x) = x_1 + x_2 + x_3 +
\cdots + x_n$, and if $I = \Nat$, $\Sigma (x)   =  x_1 + x_2 + x_3 +
\cdots $. Notice that by Weak Partition Associativity, we may equate
different partitions of a summable family $x$, for example:
\begin{eqnarray*}
x_1 + x_2 + x_3 + \cdots  & = & x_1 + (x_2 + x_3 + \cdots + x_n + \cdots)\\
& = & (x_1 + x_2) + (x_3 + x_4) + \cdots + (x_n + x_{n+1}) + \cdots \\
& = & (x_1+x_3 +x_5 + \cdots) + (x_2+x_4 + x_6 + \cdots) \\
\end{eqnarray*}
\end{definition}

\begin{remark}{\bf The WPA axiom}\label{hilleQuote}
The above Weak Partition Associativity axiom is clearly a weakening of the usual Partition Associativity Axiom from algebraic program semantics \cite{MA,HA}, where the two-sided implication is weakened to a one-sided version (see Appendix \ref{appendix} for more details). However, in this weakened form it is also familiar from traditional analysis. For example, in \cite{EiHi}, p. 108, the following property of absolute convergence of real number is stated and proved: 
\begin{quotation}
``An absolutely convergent series may be split into mutually exclusive subseries, finite or infinite in number. The sum of these subseries is equal to the sum of the original series.''
\end{quotation}
(Note that the prior assumption of an absolutely convergent series in this quotation means that this statement is not equivalent to the usual Partition Associativity axiom described in  Definition \ref{sigmamon}).
In Appendix \ref{appendix} we give various examples of PCMs from both analysis and algebraic program semantics, and compare this formalism to other axiomatisations of summation used in various fields. 
\end{remark}

We first demonstrate that the indexed summation of a PCM is invariant under isomorphism of indexing sets.

\begin{proposition}\label{welldefined}
Let  $x:I\rightarrow M$ and $y:J\rightarrow M$ be countably indexed families and let $\phi:I\rightarrow J$  be a bijection satisfying $y\Comp \phi = x$. Then 
$\Sigma(y)$
is defined exactly when $\Sigma(x)$ is defined, in which case they are equal.  
\end{proposition}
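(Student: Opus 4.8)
The plan is to deduce everything from the Weak Partition-Associativity axiom, using the bijection $\phi$ to build a partition of one indexing set whose blocks are relabelled singletons of the other. The one mild subtlety is that WPA is only a one-sided implication, so a single application yields just one direction of the claimed equivalence; the reverse direction will follow by re-running the same argument with $\phi^{-1}$ in place of $\phi$.

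Concretely, I would first suppose that $\Sigma(y)$ is defined. Set $J_i := \{\phi(i)\}$ for each $i\in I$. Since $\phi$ is a bijection and $I$ is countable by our standing hypothesis, $\{J_i\}_{i\in I}$ is a countable partition of $J$ indexed by $I$ (disjointness from injectivity of $\phi$, covering from surjectivity). Applying WPA to the summable family $\{y_j\}_{j\in J}$ with this partition: each block family $\{y_j\}_{j\in J_i}$ is a singleton family, hence summable with $\sum_{j\in J_i} y_j = y_{\phi(i)}$ by the Unary Sum axiom; moreover the family $\{\sum_{j\in J_i} y_j\}_{i\in I}$ is summable and its sum equals $\sum_{j\in J} y_j$. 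But $\sum_{j\in J_i} y_j = y_{\phi(i)} = (y\Comp\phi)(i) = x_i$, so $\{x_i\}_{i\in I}$ is literally the family $\{\sum_{j\in J_i} y_j\}_{i\in I}$; hence $\Sigma(x)$ is defined and $\Sigma(x) = \sum_{j\in J} y_j = \Sigma(y)$.

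For the converse, note that $\phi^{-1}:J\rightarrow I$ is again a bijection and $x\Comp\phi^{-1} = (y\Comp\phi)\Comp\phi^{-1} = y$, so the hypotheses of the proposition hold verbatim with the roles of $x$ and $y$ exchanged. The argument just given then shows that if $\Sigma(x)$ is defined then so is $\Sigma(y)$, with the same value, which completes the proof.

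I do not anticipate a genuine obstacle: the proof is essentially a bookkeeping exercise. The only two points that warrant care are (i) checking that $\{J_i\}_{i\in I}$ really is an admissible partition in the sense of the WPA axiom — which, as noted, reduces to bijectivity of $\phi$ together with countability of $I$ — and (ii) remembering that one genuinely needs to invoke both $\phi$ and $\phi^{-1}$, since WPA in its weak (one-sided) form does not by itself deliver the ``defined exactly when'' biconditional.
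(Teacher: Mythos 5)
Your proof is correct and matches the paper's argument: one direction via WPA applied to the singleton partition $\{\{\phi(i)\}\}_{i\in I}$ of $J$ together with the Unary Sum axiom, and the other direction by symmetry, interchanging the roles of $x$ and $y$ (the paper phrases this as a contradiction, which is equivalent to your direct use of $\phi^{-1}$). No gaps.
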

\begin{proof}   
For arbitrary $i\in I$, the set  $J_i = \{\phi(i)\}$ is a singleton, and hence, by the unary sum axiom, is summable. As indexing families are countable,  $\{J_i \ | \ i\in I\}$ is a countable partition of $J$.  Let us now assume that $\{y_j\} _{j\in J}$ is summable: we deduce that:
\begin{eqnarray*}
\sum_{j\in J} y_j  & =  & \sum_{i\in I}\sum_{j\in J_i} y_j \qquad
\mbox{Weak partition associativity}\\
& = & \sum_{i\in I}y_{\phi(i)} \qquad \mbox{Defn, and Unary sum axiom}\\
& = & \sum_{i\in I}x_i  \qquad \mbox{Defn}
\end{eqnarray*} 
Alternatively, let us assume that $\{y_j\}_{j\in J}$ is not summable. Then the assumption that $\{x_i\}_{i\in I}$ is summable will (by interchanging the roles of $x$ and $y$ in the above argument) imply the summability of $ \{y_j\}_{j\in J}$ -- a contradiction. Therefore, $\{y_j\}_{j\in J}$ is summable exactly when $\{x_i\}_{i\in I}$ is summable, in which case their sums are equal.
\end{proof}

Note the similarity of this notion with either the permutation-independence of {\em absolute convergence} of real sums (Definition \ref{realsums}), or the notion of {\em unconditional convergence} of sums in Banach space (Definition \ref{banachsums}). \\

\noindent The following basic properties of PCMs will be used throughout:
\begin{proposition}\label{basics} Let $(M,\Sigma )$ be a PCM. Then 
\begin{enumerate}
\item {\em (Summable Subfamilies)} Let $\{ x_i \}_{i\in I}$ be a summable family of
$M$. Then any subfamily $\{x_i\}_{i\in K}$,
where $K \subseteq I$, is also summable.
\item {\em (Existence of Zero)} The empty set is summable, and $x+ \{ \} = x = \{ \} +
x$ for all $x\in M$. Hence it is a zero for $M$, and we write $0 = \sum \{ \}$.
\item {\em (Sums of Zeros)} For any index set $I$, let $0_I: I\rightarrow M$
denote the constantly zero family (so $0_I(i) = 0$, for all $i\in I$).
Then $0_I$ is summable, and $\Sigma_I 0_I = 0$.  More generally, for
any element $x \in M$, $x + 0 + 0 + 0 + \cdots  = x ~  $ (where $0 + 0 + \cdots$ 
denotes (the sum of) either a finite or  infinite sequence of $0$'s). 
\end{enumerate}
\label{pcmsums}
\end{proposition}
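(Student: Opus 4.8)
The plan is to derive all three statements from the two PCM axioms alone, exploiting two features: that the Unary Sum axiom (together with non\-emptiness of $M$) supplies at least one summable family to work from, and that the Weak Partition\-Associativity axiom admits partitions in which countably many blocks are empty. For part~1, given a summable family $\{x_i\}_{i\in I}$ and a subset $K\subseteq I$, I would apply Weak Partition\-Associativity to the two\-block partition $I = K \sqcup (I\setminus K)$ (so $J=\{1,2\}$, one block possibly empty). The conclusion of the axiom includes that $\{x_i\}_{i\in I_j}$ is summable for every $j$, and taking $I_1 = K$ gives the claim.

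For part~2, I first note the empty family is summable: choosing any $m\in M$, the Unary Sum axiom makes the singleton family $\{m\}$ summable, and the empty family is a subfamily of it, so it is summable by part~1. Write $0 = \sum\{\}$. To see $x + 0 = x$, apply Weak Partition\-Associativity to the singleton family $\{x\}$ on $I=\{i'\}$ with the partition $I_1 = \{i'\}$, $I_2 = \emptyset$: the resulting family of partial sums is exactly $(x,0)$, it is summable, and its sum equals $\sum_{i\in I}x_i = x$ by the Unary Sum axiom. The identity $0 + x = x$ follows either from the symmetric partition $I_1=\emptyset$, $I_2=\{i'\}$, or at once from Proposition~\ref{welldefined} applied to the transposition of the two indices.

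For part~3, the key observation is that when $I=\emptyset$, for any countable set $J$ the family all of whose blocks are empty is a (valid) partition of $\emptyset$. Applying Weak Partition\-Associativity to the empty family with this partition shows that $\{\sum_{i\in\emptyset}x_i\}_{j\in J} = 0_J$ is summable with sum $\sum_{i\in\emptyset}x_i = 0$; renaming $J$ as $I$ gives the first assertion. For the refinement $x + 0 + 0 + \cdots = x$, I would take the singleton family $\{x\}$ on $\{i'\}$ and partition it into one distinguished block equal to $\{i'\}$ together with countably (finitely or infinitely) many empty blocks; Weak Partition\-Associativity then shows that the family of partial sums --- namely $x$ in one coordinate and $0$ in all others --- is summable with sum $x$.

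The only real subtlety is that Weak Partition\-Associativity is a one\-sided implication: it lets us pass from summability of a whole family to summability of its blocks and of its family of partial sums, but not conversely. Hence every summability assertion above has to be obtained by exhibiting the family in question as a subfamily, a block, or the family of partial sums of a partition of a family already known to be summable --- and it is precisely the admissibility of empty blocks that makes the constant\-zero families and the ``$x+0+\cdots$'' families reachable in this way. I expect the verification to be otherwise routine once the partitions are chosen correctly.
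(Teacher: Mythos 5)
Your proposal is correct and follows essentially the same route as the paper's own proof: part~1 via the two-block partition $\{K, I\setminus K\}$, part~2 by obtaining the empty family as a subfamily of a singleton family guaranteed by the Unary Sum axiom, and part~3 by exploiting partitions with empty blocks applied to the empty and singleton families. The only difference is that you spell out the choice of partitions (and the role of Proposition~\ref{welldefined}) slightly more explicitly than the paper does, which is harmless.
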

\begin{proof} {\em The  proofs of (1) and (2) below are based on very similar proofs
(for  the  special case of partially additive monoids -- see Appendix A)
presented in \cite{MA}.}\\
\begin{enumerate}
\item {\em (Summable Subfamilies)} Any subset $K\subseteq I$ defines a partition of $I$, namely  
$\{K, I\setminus K\}$.   By Weak
Partition Associativity,  $\sum_{i\in K}x_i$ exists. 
\item {\em (Existence of Zero)} As $M$ is by definition non-empty, the unary sum axiom
implies that the set of summable families is also non-empty. The empty family is a 
subfamily of any summable family; hence letting $K = \emptyset$ in the partition above,
we see that the empty family
$\{ \}$ is summable. It is then immediate that $\sum \{\} =0$ is a zero for the
summation operation, and $0+x=x=x+0$ exists for arbitrary $x\in M$.
\item {\em (Sums of Zeros)} Pick any partition of $I$ whose first cell is $I$
itself, and the remaining cells are empty (the number of empty cells is either
finite or infinite,  depending upon whether one wishes a    finite (resp. infinite) sum
of
$0$'s.  For example, write $I = I_1
\uplus (\uplus_{n>1} I_n)$, where
$I_1 = I$,
$I_i =
\emptyset$, if $i> 1$.  
  If $x =
\{x_i\}_{i\in I}$ is an
$I$-indexed {\em summable} family, then by Weak Partition Associativity we have:
$\sum_{i\in I}x_i =
\sum_{i\in I_1}x_i +
\sum_{n>1}(\sum_{i\in I_n}x_i) = \sum_{i\in I_1}x_i + 0 + 0 + \cdots$.
Now pick a singleton family $\{x\}$,  so $\Sigma(x) = x$.  The result
follows. 
\end{enumerate}

\end{proof}

\noindent
We now define homomorphisms of PCMs, and show that the class of all PCMs, together with this notion of homomorphism, forms a category:
\begin{definition}{\em PCM homomorphisms, the category of PCMs}\\
A {\bf homomorphism} of PCMs is a function $f:(M,\Sigma )\rightarrow (N,\Sigma')$ satisfying the following natural property: \\

Given a summable family $\{ m_i \}_{i\in I}$ of $(M,\Sigma )$, then $\{ f(m_i) \}_{i\in I}$ is a summable family of $(N,\Sigma')$, and $f\left( \Sigma_{i\in I} m_i  \right) \ = \ \Sigma'_{i\in I} f(m_i)$. \\
\end{definition}

\begin{proposition}
The class of all PCMs, together with the above notion of homomorphism, forms a category that we denote $\PCM$.
\end{proposition}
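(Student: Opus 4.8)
The plan is to verify the three defining pieces of a category in turn: composability, associativity, and identities. First I would check that the composite of two PCM homomorphisms is again a PCM homomorphism. Given $f:(M,\Sigma)\rightarrow(N,\Sigma')$ and $g:(N,\Sigma')\rightarrow(P,\Sigma'')$ and a summable family $\{m_i\}_{i\in I}$ of $M$, the homomorphism property of $f$ gives that $\{f(m_i)\}_{i\in I}$ is summable in $N$ with $f(\Sigma_{i\in I}m_i)=\Sigma'_{i\in I}f(m_i)$; applying the homomorphism property of $g$ to this summable family then yields that $\{g(f(m_i))\}_{i\in I}$ is summable in $P$ and $g(\Sigma'_{i\in I}f(m_i))=\Sigma''_{i\in I}g(f(m_i))$. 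Chaining these two equalities shows $(g\circ f)(\Sigma_{i\in I}m_i)=\Sigma''_{i\in I}(g\circ f)(m_i)$, so $g\circ f$ is a PCM homomorphism.

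Next I would handle the identity morphisms: for each PCM $(M,\Sigma)$ the set-function $\mathrm{id}_M$ is trivially a PCM homomorphism, since any summable family $\{m_i\}_{i\in I}$ is sent to itself and $\mathrm{id}_M(\Sigma_{i\in I}m_i)=\Sigma_{i\in I}m_i=\Sigma_{i\in I}\mathrm{id}_M(m_i)$. The unit laws $f\circ\mathrm{id}_M=f=\mathrm{id}_N\circ f$ and associativity $(h\circ g)\circ f=h\circ(g\circ f)$ then follow immediately, because composition of PCM homomorphisms is just composition of the underlying functions, which is associative and unital in $\mathbf{Set}$; one only needs to note that the underlying-function assignment is faithful so these identities are inherited. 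A brief remark that hom-classes are well-defined (PCM homomorphisms between two fixed PCMs form a set, being a subset of the function set) completes the check.

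There is essentially no hard part here — the proposition is a routine bookkeeping verification, and the only thing worth stating carefully is the closure-under-composition step, since it is the one place where the homomorphism axiom is actually used (twice, in sequence). I would present the proof as a short paragraph: establish closure under composition as above, observe that identities are homomorphisms, and then invoke the fact that the remaining category axioms hold because they already hold for the underlying functions in $\mathbf{Set}$.
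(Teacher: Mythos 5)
Your proof is correct and follows essentially the same route as the paper's: verify that identities are PCM homomorphisms, that the composite of two homomorphisms is again one (applying the homomorphism axiom twice in sequence, exactly as the paper does), and that associativity and the unit laws are inherited from composition of underlying functions. The extra remark about hom-classes being sets is harmless and not needed for the paper's argument.
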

\begin{proof}
First note that for a PCM $(M,\Sigma^M)$ the identity function $1_M:M\rightarrow M$ is a PCM homomorphism. Next, consider PCM homomorphisms $f:(A,\Sigma^A)\rightarrow (B,\Sigma^B)$ and $g:(B,\Sigma^B)\rightarrow (C,\Sigma^C)$, together with a summable family $\{ a_i\}_{i\in I}$ of $A$.  Then the function $gf:A\rightarrow C$ satisfies $g\left(f\left(\Sigma^A_{i\in I} a_i\right)\right) = g\left( \Sigma^B_{i\in I} f(a_i) \right) = \Sigma^C_{i\in I} gf(a_i)$. 
(Note these sums are required to exist, by the definition of PCM homomorphism).  Thus $gf$ is a PCM homomorphism from $(A,\Sigma^A)$ to $(C,\Sigma^C)$. Finally, associativity of composition follows from the associativity of composition for functions.
\end{proof}
Examples of PCMs are given in Appendix \ref{appendix}.  For the program outlined in Section \ref{intro}, we now require categories whose hom-sets are PCMs, together with a specified interaction between summation and composition.

\section{Categories with a notion of summation on hom-sets}\label{summcats}
We now introduce a certain class of categories whose hom-sets are PCMs, together with axioms for the interaction of summation and composition:

\begin{definition}\label{PCMcats}{\em PCM-categories}\\
We define a {\bf PCM-category} to be a locally small\footnote{i.e. we allow for a proper class of objects, but require that all homsets are indeed sets.} category $\CC$, together with, for all $X,Y\in Ob(\CC)$, a partial function $\Sigma^{(X,Y)}$ from countably indexed families over $\CC(X,Y)$ to $\CC(X,Y)$ (we will often omit the superscript, when this is clear from the context). 

This class of partial functions is required to satisfy the following axioms:
\begin{enumerate}
\item {\bf (PCM-structure on hom-sets)}\\
$\left( \CC(X,Y),\Sigma^{(X,Y)}\right) $ is a PCM, for all $X,Y\in Ob(\CC)$.
\item {\bf (Strong distributivity)}\\
Given summable families $\{ f_i\in \CC(X,Y) \}_{i\in I}$ and $\{ g_j \in \CC(Y,Z)\}_{j\in J}$, then $\{ g_jf_i \in \CC(X,Z)\}_{(j,i)\in J\times I}$ is a summable family satisfying
\[ \sum_{(j,i)\in J\times I }^{\ \ \ \ \ \ \ \ \ (X,Z)} g_jf_i \ = \ \left( \sum_{j\in J}^{\ \ \ \ \ \ \ \ \ (Y,Z)} g_j \right) \left( \sum_{i\in I}^{\ \ \ \ \ \ \ \ \ (X,Y)} f_i \right) \]
\end{enumerate}
\end{definition}
We consider examples of PCM-categories in Appendix \ref{appendix}, and properties of PCM-categories in Section \ref{PCMcatprops} below.

\paragraph*{\bf PCM-categories, and categorical enrichment} A very natural question at this point is whether a ``PCM-category'' is a category enriched over some suitable (monoidal, or closed) category of PCMs. We refer to Section \ref{neverending} for this question.\\

\subsection{Properties of PCM-categories}\label{PCMcatprops}
As may be expected, the strong distributivity property, together with the unary sum axiom, implies the usual left- and right- distributivity laws:
\begin{proposition}\label{distrib}
Let $\left( \CC , \Sigma^{(\underline{ \ } , \underline{ \ })} \right)$ be a PCM category, and let $\{ g_i \in \CC(Y,Z) \}_{i\in I}$ be a summable family. Then, for all arrows $f\in \CC(X,Y)$ and $h\in \CC(Z,T)$, 
\[ \{ hg_i \in \CC(Y,T) \}_{i\in I} \ \mbox{ and } \ \{ g_if \in \CC(X,Z) \}_{i\in I} \]
are summable families, and 
\[ h\left( \sum_{i\in I} g_i \right) \ =\  \sum_{i\in I} (hg_i) \ \ \mbox{ and } \ \ \left( \sum_{i\in I} g_i \right)   f \ = \ \sum_{i\in I} (g_if) \]
\end{proposition}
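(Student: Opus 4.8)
The plan is to derive both distributivity laws from the Strong Distributivity axiom by specialising one of the two summable families to a singleton, and then tidying up the resulting index set using Proposition~\ref{welldefined} together with the Unary Sum axiom.

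For the right-hand identity, I would fix $h\in\CC(Z,T)$ and view it as a one-element family $\{h_j\in\CC(Z,T)\}_{j\in J}$ indexed by a singleton $J=\{\ast\}$, which is summable by the Unary Sum axiom with $\sum_{j\in J}h_j=h$. Applying Strong Distributivity to this family and the given summable family $\{g_i\in\CC(Y,Z)\}_{i\in I}$ shows that $\{h_jg_i\in\CC(Y,T)\}_{(j,i)\in J\times I}$ is summable, with
\[ \sum_{(j,i)\in J\times I} h_jg_i \ = \ \left(\sum_{j\in J}h_j\right)\left(\sum_{i\in I}g_i\right) \ = \ h\left(\sum_{i\in I}g_i\right). \]
The map $\phi\colon I\to J\times I$, $i\mapsto(\ast,i)$, is a bijection, and the $(J\times I)$-indexed family above, precomposed with $\phi$, is exactly $\{hg_i\}_{i\in I}$. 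By Proposition~\ref{welldefined}, $\{hg_i\}_{i\in I}$ is therefore summable and has the same sum, which gives $h\left(\sum_{i\in I}g_i\right)=\sum_{i\in I}(hg_i)$.

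The left-hand identity is proved symmetrically: take $f\in\CC(X,Y)$ as a singleton-indexed family $\{f_j\}_{j\in J}$, apply Strong Distributivity to $\{f_j\}_{j\in J}$ and $\{g_i\}_{i\in I}$ in the appropriate order so that the product family is $\{g_if_j\in\CC(X,Z)\}_{(i,j)\in I\times J}$, use the Unary Sum axiom to evaluate $\sum_{j\in J}f_j=f$, and invoke Proposition~\ref{welldefined} along the bijection $I\to I\times J$, $i\mapsto(i,\ast)$, to replace the index set $I\times J$ by $I$. There is no genuine obstacle here; the only points requiring care are the bookkeeping of which family plays the left-hand (codomain-side) role and which the right-hand (domain-side) role in each invocation of Strong Distributivity, and the explicit appeal to Proposition~\ref{welldefined} to justify that reindexing a summable family along a bijection preserves both summability and the value of the sum.
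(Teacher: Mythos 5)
Your proposal is correct and follows essentially the same route as the paper's own proof: treat $h$ (respectively $f$) as a singleton-indexed family via the Unary Sum axiom, apply Strong Distributivity, and then use Proposition~\ref{welldefined} to reindex along the bijection between $J\times I$ (or $I\times J$) and $I$. Your explicit description of the bijection and of which family occupies the codomain-side slot is merely a more careful spelling-out of the same argument.
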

\begin{proof}
Consider the index set $A=\{ a'\}$, and the indexed family $\{ h_a \}_{a\in A}$ given by $h_{a'}=h$. By the  unitary sum axiom $h=\sum_{a\in A} h_a$, and so 
\[ h\sum_{i\in I} g_i = \left( \sum_{a\in A} h_a \right) \left( \sum_{j\in J}  g_j \right) \]
By strong distributivity 
\[ \left( \sum_{a\in A} h_a \right) \left( \sum_{j\in J}  g_j \right)  = \sum_{(a,j) \in A\times J } h_a g_j \]
As $A$ is a single element set, $A \times J \cong J$, and $h_a=h$. Therefore, by Proposition \ref{welldefined}, 
\[ h\sum_{j\in J} g_j  \ = \ \sum_{j \in J } h g_j \]
The proof for the opposite distributive law is almost identical.
\end{proof}

\begin{corol}\label{zeroarrows}
Every PCM-category has zero arrows.
\end{corol}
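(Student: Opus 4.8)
The plan is to exhibit, for each ordered pair of objects $X,Y\in Ob(\CC)$, a canonical candidate for a zero arrow and then to verify the required absorption property directly from the distributivity results already in hand. The natural candidate is the empty sum: by Proposition~\ref{basics}(2) applied to the PCM $\left(\CC(X,Y),\Sigma^{(X,Y)}\right)$, the empty family is summable, so we may set $0_{X,Y} := \sum^{(X,Y)}\{\,\}\in\CC(X,Y)$. Thus the data of a zero-arrow system is available for formal reasons; the content is that this system is absorbed by composition.

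Concretely, it remains to check that for every $f\in\CC(W,X)$ and every $g\in\CC(Y,Z)$ one has $0_{X,Y}\circ f = 0_{W,Y}$ and $g\circ 0_{X,Y} = 0_{X,Z}$. For the first identity I would apply Proposition~\ref{distrib} to the empty (hence, by Proposition~\ref{basics}(2), summable) family $\{g_i\in\CC(X,Y)\}_{i\in\emptyset}$ together with the arrow $f$: this yields $\left(\sum_{i\in\emptyset} g_i\right)f = \sum_{i\in\emptyset}(g_i f)$, whose left-hand side is $0_{X,Y}\circ f$ by definition of $0_{X,Y}$ and whose right-hand side is the empty sum in $\CC(W,Y)$, i.e.\ $0_{W,Y}$. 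The second identity follows in exactly the same way from the opposite distributive law of Proposition~\ref{distrib}, now using the empty family $\{g_i\in\CC(X,Y)\}_{i\in\emptyset}$ and the arrow $g$. (One could equally invoke the Strong Distributivity axiom directly, taking one of its two families to be the empty family.)

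Finally, composing the two absorption identities gives $0_{Y,Z}\circ 0_{X,Y} = 0_{X,Z}$ and, more generally, $g\circ 0_{X,Y}\circ f = 0_{W,Z}$, so $\{0_{X,Y}\}_{X,Y\in Ob(\CC)}$ is a genuine system of zero arrows and $\CC$ has zero arrows. I do not expect any serious obstacle here: the only point requiring a moment's care is to confirm that Proposition~\ref{distrib} — and the Strong Distributivity axiom underlying it — really does apply when the summable family is indexed by $\emptyset$, which it does, since the empty set is a legitimate countable index set and the empty family is summable by Proposition~\ref{basics}(2).
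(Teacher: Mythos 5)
Your proof is correct and follows essentially the same route as the paper: define $0_{X,Y}$ as the empty sum in the PCM $\CC(X,Y)$ and use the left/right distributive laws of Proposition \ref{distrib} applied to the empty (summable) family to obtain the absorption identities. Your extra remark that Proposition \ref{distrib} legitimately applies when $I=\emptyset$ is a worthwhile point of care, but it is the same argument the paper gives, just spelled out more explicitly.
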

\begin{proof}
Let  $\left( \CC , \Sigma^{(\underline{\ },\underline{\ })} \right)$ be a PCM-category. For all $X,Y\in Ob(\CC)$ we define $0_{XY}$ to be the sum of the empty set $ \{ \}_{XY} \subseteq \CC(X,Y)$. Then by the above distributive laws, for all $f\in \CC(Y,Z)$, $f0_{XY} = f\left(\sum\{ \}_{XY}\right)$, and hence $f0_{XY}=\left(\sum \{\}_{XZ}\right)=0_{XZ}$. Similarly $0_{XY}g=0_{WY}$, for all $g\in \CC(W,X)$.
\end{proof}

\begin{remark}{The usual treatment of distributivity}
The usual approach in algebraic program semantics is to take the above left- and right- distributivity laws as  axiomatic, and use the (much stronger) notion of summation to prove an analogue of strong distributivity. This is described in Appendix \ref{appendix}.  We do not take this approach, because the axiomatisation of summation this requires is too strong  --- it imposes the {\em positivity property} that $x+y=0 \ \Rightarrow \ x=0=y$. Were we to have taken this approach, it would have meant ruling out many of the motivating examples for the Cauchy product construction, including Group Rings, convergent polynomials over real and complex variables, etc. 

Instead, as we demonstrate by example in Appendix \ref{appendix}, neither the PCM axiomatisation, nor strong distributivity, implies  the positivity property.\\
\end{remark}

We now consider some implications of strong distributivity:

\begin{proposition}\label{reorderingsummations}
Let $\left( \CC , \Sigma^{(\underline{\ },\underline{\ })} \right)$ be a PCM-cat. and let $\{ g_j \in \CC(Y,Z) \}_{j\in J}$ and $\{ f_i \in \CC(X,Y) \}_{i\in I}$ be summable families. Then $\sum_{j\in J} \left( \sum_{i\in I} g_jf_i \right)$ and $\sum_{i\in I}\left( \sum_{j\in J} g_jf_i\right)$ are both defined, and 
\[ \sum_{i\in I} \left( \sum_{j\in J} g_jf_i \right) = \sum_{(i,j)\in I \times J} g_jf_i  = \sum_{j\in J} \left( \sum_{i\in I} g_jf_i \right) \]
\end{proposition}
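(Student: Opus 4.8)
The plan is to derive everything from the strong distributivity axiom, Weak Partition Associativity, and the isomorphism-invariance of summation (Proposition \ref{welldefined}); no new idea is needed beyond careful bookkeeping of index sets. First I would apply strong distributivity to the two given summable families to conclude that $\{g_j f_i\}_{(j,i)\in J\times I}$ is a summable family of $\CC(X,Z)$. Since the coordinate-swap map $J\times I\to I\times J$ is a bijection, Proposition \ref{welldefined} then shows $\{g_j f_i\}_{(i,j)\in I\times J}$ is summable with the same sum, and this common value is the quantity $\sum_{(i,j)\in I\times J} g_j f_i$ occurring in the statement.

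Next I would obtain the two iterated sums by partitioning $J\times I$ in the two obvious ways. For each $i\in I$ put $P_i=\{(j,i):j\in J\}\subseteq J\times I$; then $\{P_i\}_{i\in I}$ is a countable partition of $J\times I$ (countability holds since $I$ and $J$ are countable, and empty cells are allowed by Definition \ref{PCM}). Weak Partition Associativity applied to the summable family $\{g_j f_i\}_{(j,i)\in J\times I}$ gives that each $\{g_j f_i\}_{(j,i)\in P_i}$ is summable, that $\{\sum_{(j,i)\in P_i} g_j f_i\}_{i\in I}$ is summable, and that $\sum_{(j,i)\in J\times I} g_j f_i = \sum_{i\in I}\left(\sum_{(j,i)\in P_i} g_j f_i\right)$. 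Since $P_i\cong J$ via $(j,i)\mapsto j$, Proposition \ref{welldefined} identifies $\sum_{(j,i)\in P_i} g_j f_i$ with $\sum_{j\in J} g_j f_i$ (equivalently, this inner sum is $\left(\sum_{j\in J} g_j\right) f_i$ by the right-distributivity of Proposition \ref{distrib}). Hence $\sum_{i\in I}\left(\sum_{j\in J} g_j f_i\right)$ is defined and equals $\sum_{(i,j)\in I\times J} g_j f_i$. Running the symmetric argument with the partition $Q_j=\{j\}\times I$ for $j\in J$ yields, in the same way, that $\sum_{j\in J}\left(\sum_{i\in I} g_j f_i\right)$ is defined and equals the same value, which gives the proposition.

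The only place requiring care — and the nearest thing to an obstacle — is the index-set bookkeeping: verifying that $\{P_i\}_{i\in I}$ and $\{Q_j\}_{j\in J}$ genuinely are partitions in the sense of Definition \ref{PCM}, and tracking the coordinate bijection between $J\times I$ and $I\times J$ so that all three displayed expressions refer to the same summable family up to the reindexings covered by Proposition \ref{welldefined}. Once these are in place the conclusion is immediate.
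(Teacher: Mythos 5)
Your proof is correct and follows essentially the same route as the paper: strong distributivity gives summability of $\{g_jf_i\}_{(j,i)\in J\times I}$, and Weak Partition Associativity applied to the two coordinate partitions yields the two iterated sums. Your explicit appeals to Proposition \ref{welldefined} for the reindexings (the swap $J\times I\cong I\times J$ and $P_i\cong J$) only make precise what the paper leaves implicit, so there is no substantive difference.
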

\begin{proof}
By the strong distributivity property, the family $\{ g_jf_i \in \CC(X,Z) \}_{(j,i)\in J\times I}$ is summable. Now consider the partition of $J\times I$ given by $\{ \{ (j,i) \}_{j\in J }\}_{i\in I}$. By the weak partition-associativity axiom, for arbitrary fixed $i\in I$ the family $\{ g_jf_i \}_{j\in J}$ is summable, as is $\left\{ \sum_{j\in J} g_jf_i \right\}_{i\in I}$ and 
\[ \sum_{i\in I} \left( \sum_{j\in J} g_jf_i \right) = \sum_{(i,j)\in I \times J} g_jf_i \]
The dual identity
\[ \sum_{j\in J} \left( \sum_{i\in I} g_jf_i \right) = \sum_{(i,j)\in I \times J} g_jf_i \]
follows by partitioning $J\times I$ as $\{ \{  (j,i) \}_{i\in I} \}_{j\in J}$, and therefore 
\[ \sum_{i\in I} \left( \sum_{j\in J} g_jf_i \right) = \sum_{(i,j)\in I \times J} g_jf_i  = \sum_{j\in J} \left( \sum_{i\in I} g_jf_i \right) \]
\end{proof}

\begin{proposition}\label{composingsums}
Let $\left( \CC , \Sigma^{(\underline{\ },\underline{\ })} \right)$ be a PCM-cat. and let $\{ s_i \in \CC(X,X) \}_{i\in I}$ be a summable family. Then for all $n>0$, the family 
\[ \{ s_{i_n}s_{i_{n-1}} \ldots s_{i_2}s_{i_1} \in \CC(X,X) \}_{(i_n,\ldots i_1)\in I^n } \]
is summable, as are all its subfamilies.
\end{proposition}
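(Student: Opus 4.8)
The plan is to induct on $n$, using \textbf{strong distributivity} (axiom 2 of Definition \ref{PCMcats}) to pass from exponent $n$ to exponent $n+1$, and deferring the claim about subfamilies to the Summable Subfamilies property (Proposition \ref{basics}(1)), which makes it automatic once the full family indexed by $I^n$ is known to be summable. So the real content is just: for every $n>0$, the family $\{ s_{i_n}\ldots s_{i_1} \}_{(i_n,\ldots,i_1)\in I^n}$ is summable.

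For the base case $n=1$, the family $\{ s_{i_1} \}_{i_1\in I}$ is summable by hypothesis. For the inductive step, suppose $\{ s_{i_n}\ldots s_{i_1} \}_{(i_n,\ldots,i_1)\in I^n}$ is summable. This family and the original family $\{ s_j \}_{j\in I}$ are both summable families of arrows in $\CC(X,X)$, so strong distributivity applies with $g_j = s_j$ and $f_{(i_n,\ldots,i_1)} = s_{i_n}\ldots s_{i_1}$: the family $\{ s_j\,(s_{i_n}\ldots s_{i_1}) \}_{(j,(i_n,\ldots,i_1))\in I\times I^n}$ is summable. Now the evident bijection $\phi : I^{n+1} \rightarrow I\times I^n$, $(i_{n+1},i_n,\ldots,i_1)\mapsto (i_{n+1},(i_n,\ldots,i_1))$, carries the family $\{ s_{i_{n+1}}s_{i_n}\ldots s_{i_1} \}_{(i_{n+1},\ldots,i_1)\in I^{n+1}}$ to the family just shown summable — here one uses that composition in $\CC$ is associative, so $s_{i_{n+1}}s_{i_n}\ldots s_{i_1}$ is unambiguous and equals $s_{i_{n+1}}(s_{i_n}\ldots s_{i_1})$. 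By Proposition \ref{welldefined}, $\{ s_{i_{n+1}}\ldots s_{i_1} \}_{(i_{n+1},\ldots,i_1)\in I^{n+1}}$ is therefore summable, closing the induction.

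Finally, for any fixed $n>0$, every subfamily $\{ s_{i_n}\ldots s_{i_1} \}_{(i_n,\ldots,i_1)\in K}$ with $K\subseteq I^n$ is summable by Proposition \ref{basics}(1). I do not expect any genuine obstacle here: the only thing to be careful about is the bookkeeping of the re-indexing isomorphism $I^{n+1}\cong I\times I^n$ and checking it matches the composites correctly, which is exactly what Proposition \ref{welldefined} together with associativity of composition delivers. In effect the statement is a clean iteration of strong distributivity, and the proof is little more than its repackaging by induction.
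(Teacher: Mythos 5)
Your proof is correct and follows essentially the same route as the paper: induction on $n$ with strong distributivity for the inductive step and the Summable Subfamilies property of Proposition \ref{basics} for the final claim. The only difference is that you make explicit the re-indexing $I^{n+1}\cong I\times I^n$ via Proposition \ref{welldefined}, a bookkeeping step the paper leaves implicit.
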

\begin{proof}
{\em (By induction) }  The result is trivially true for $n=1$. Now assume it holds for some $k>0$. Then 
by strong distributivity, 
\[ \{ s_i s_{i_k} \ldots s_{i_1} \in \CC(X,X) \}_{(i,(i_k,\ldots i_1))\in I\times I^k } \]
is also summable, and our result follows by induction. Finally, recall the summable subfamilies property (Proposition \ref{basics}).
\end{proof}

\begin{corol}\label{monoidsums}
Let $\left( \CC , \Sigma^{(\underline{\ },\underline{\ })} \right)$ be a PCM-cat. and let $F= \{ f_i \in \CC(X,X) \}_{i\in I}$ be a summable family containing the identity. Then
\begin{enumerate} 
\item Arbitrary finite subsets of the submonoid of $\CC(X,X)$ generated by $F$ are summable.
\item Let $F'$ denote the indexed subfamily given by removing all occurrences of $1_X$ from $F$. When there exists some word $w$ in the subsemigroup generated by $F'$ satisfying $w=1_X$, then 
\begin{enumerate}
\item The sum $\sum_{i=1}^M 1_X$ exists, for all $M\in {\Bbb N}$.
\item For all $f\in \CC(X,Y)$ and $g\in \CC(W,X)$, the sums 
\[ \sum_{i=1}^M f \in \CC(X,Y) \ \mbox{ and } \ \sum_{i=1}^M g\in \CC(W,X) \]
exist, for all $M\in {\Bbb N}$.
\end{enumerate}
\end{enumerate}
\end{corol}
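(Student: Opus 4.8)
The plan is to prove the three parts in order, in each case exhibiting the desired (possibly repeated) family as, up to reindexing, a subfamily of one of the summable ``word families'' $\{ s_{i_n}\cdots s_{i_1}\}_{(i_n,\ldots,i_1)\in I^n}$ from Proposition~\ref{composingsums}, and then appealing to the reindexing invariance of Proposition~\ref{welldefined}. Throughout I fix an index $i_0\in I$ with $f_{i_0}=1_X$, which exists since $F$ contains the identity.

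For part (1), given a nonempty finite subset $S$ of the submonoid generated by $F$ (the empty subset being summable by Proposition~\ref{basics}), I would choose for each $s\in S$ a representation as a product $f_{k_{\ell}}\cdots f_{k_1}$ of length $\ell=\ell(s)\geq 1$ --- always possible since $1_X=f_{i_0}$ --- and set $n=\max_{s\in S}\ell(s)$. Using $f_{i_0}=1_X$, I would prepend $n-\ell(s)$ copies of $i_0$ to get a length-$n$ tuple $t_s\in I^n$ whose associated product is still $s$. Distinct $s$ give distinct $t_s$ (they have distinct images under the product map), so $\{t_s:s\in S\}\subseteq I^n$ is in bijection with $S$; Proposition~\ref{composingsums} (applied to $F$) makes the corresponding subfamily of the length-$n$ word family summable, and since that subfamily is the family $S$ itself up to the bijection $s\mapsto t_s$, Proposition~\ref{welldefined} gives that $S$ is summable.

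For part (2a) --- the crux --- let $w=f_{j_m}\cdots f_{j_1}=1_X$ be a word as in the hypothesis, with every $j_k\in I'$, hence $f_{j_k}\neq 1_X$ and so $j_k\neq i_0$. Fix $M\geq 1$, put $N=mM$, and view a tuple in $I^N$ as $M$ consecutive blocks of length $m$. For $p=1,\ldots,M$ I would take $t_p\in I^N$ to be the tuple whose $p$-th block is $(j_m,\ldots,j_1)$ and all of whose other blocks are $(i_0,\ldots,i_0)$. Each block then contributes $1_X$ to the product (the $p$-th because $w=1_X$, the others because $f_{i_0}=1_X$), so every $t_p$ has product $1_X$; and since $(j_m,\ldots,j_1)$ contains no $i_0$, the $t_p$ are pairwise distinct. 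Thus $\{t_1,\ldots,t_M\}$ is an $M$-element subset of $I^N$ on which the length-$N$ word family (summable by Proposition~\ref{composingsums}) is constantly $1_X$, so by Proposition~\ref{welldefined} the $M$-fold family of copies of $1_X$ is summable, i.e. $\sum_{i=1}^M 1_X$ exists.

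Part (2b) would then follow immediately from distributivity: applying Proposition~\ref{distrib} to the summable family $\{1_X\}_{i=1}^M$ in $\CC(X,X)$ shows that $\{f\,1_X\}_{i=1}^M=\{f\}_{i=1}^M$ is summable for every $f\in\CC(X,Y)$ (with sum $f\,(\sum_{i=1}^M 1_X)$), and symmetrically on the other side for $g\in\CC(W,X)$. I expect the only real difficulty to be the bookkeeping in (2a): one must manufacture $M$ genuinely \emph{distinct} index tuples all evaluating to $1_X$, and this is exactly what the hypothesis ``$w=1_X$ for some word $w$ in $F'$'' buys --- it guarantees a block pattern that differs from the all-$i_0$ padding, so the candidate tuples do not collapse to one.
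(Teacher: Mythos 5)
Your proposal is correct and follows essentially the same route as the paper: part (1) pads each element to a word of a common length in $F$ using the identity and invokes Proposition~\ref{composingsums} together with summable subfamilies; part (2a) slides the block $w$ through identity padding to manufacture $M$ distinct index tuples all evaluating to $1_X$ (the paper's family $\{1_X^{K(M-N)}w1_X^{KN}\}_{N=1..M}$); and part (2b) is the same appeal to Proposition~\ref{distrib}. Your explicit use of Proposition~\ref{welldefined} just makes the reindexing step the paper leaves implicit.
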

\begin{proof}
\begin{enumerate}
\item Consider a finite subset $T\subseteq F^*\subseteq \CC(X,X)$, where $F^*$ denotes the free monoid on $F$. As $T$ is finite, there exists some $K\in {\Bbb N}$ such that each $t\in T$ may be written as a distinct word of no more than $K$ members of $F$. However, since $F$  contains the identity, each word of $T$ may be written as a distinct word of exactly $K$ members of $\{ f_i\}_{i\in I}$. Thus, our result follows by Proposition \ref{composingsums} above and the summable subfamilies property (Proposition \ref{basics}).
\item We now assume the additional condition on $F$ given above:
\begin{enumerate}
\item Let us write $w=1_X$ as a word of $K$ elements of $F'$. Then by Proposition \ref{composingsums} above, the family
\[ \{  1_X^{K(M-N)} w 1_X^{KN}   \}_{N=1..M}  \]
is summable. However, $1_X^{K(M-N)} w 1_X^{KN}=1_X$, for all $N=1..M$. Therefore $\sum_{N=1}^M 1_X$ exists, as does $\sum_{N=1}^{M'}1_X$, for all $0<M'<M$, by the summable subfamilies property (Proposition \ref{basics}).
\item By distributivity (Proposition \ref{distrib} above) $\{  f1_X \ \in \CC(X,Y) \}_{i=1..M}$ exists, and hence $\sum_{i=1}^M f$ exists. The proof for arbitrary $g\in \CC(W,X)$ is similar.
\end{enumerate}
\end{enumerate}
\end{proof}

\subsection{The category of PCM-categories}\label{PCM-end}
The class of all PCM-categories is itself a category:
\begin{definition}{\em PCM-functors, the category $\CS$}\\
Given PCM-categories $\CC , \D$, we say that a functor $\Gamma : \CC\rightarrow \D$ is a {\bf PCM-functor} when:
\begin{itemize}
\item Given a summable family $\{ f_i \in \CC(X,Y) \}_{i\in I}$, then $\{ \Gamma(f_i) \in \D(\Gamma(X),\Gamma(Y)) \}$ is a summable family, and 
\[ \Gamma  \left( \sum_{i\in I} f_i \right)  = \sum_{i\in I} \Gamma (f_i)  \]
\end{itemize}
We denote the category of all PCM-categories and PCM-functors by $\CS$. 
\end{definition}

\begin{proposition}
$\CS$ is well-defined.
\end{proposition}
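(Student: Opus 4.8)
The plan is to verify the three things required for $\CS$ to be a category: that identity functors are PCM-functors, that the composite of two PCM-functors is again a PCM-functor, and that composition is associative with the identities acting as units. The last point is immediate, since PCM-functors are in particular ordinary functors and composition of functors is already known to be associative and unital; so the real content is closure under identities and composition.

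First I would check that for any PCM-category $\CC$ the identity functor $1_\CC:\CC\to\CC$ is a PCM-functor. Given a summable family $\{f_i\in\CC(X,Y)\}_{i\in I}$, the family $\{1_\CC(f_i)\}_{i\in I}$ is literally the same family $\{f_i\}_{i\in I}$, hence summable, and $1_\CC\left(\sum_{i\in I}f_i\right)=\sum_{i\in I}f_i=\sum_{i\in I}1_\CC(f_i)$. So the defining condition holds trivially.

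Next I would show closure under composition. Let $\Gamma:\CC\to\D$ and $\Delta:\D\to\E$ be PCM-functors, and let $\{f_i\in\CC(X,Y)\}_{i\in I}$ be a summable family. Since $\Gamma$ is a PCM-functor, $\{\Gamma(f_i)\in\D(\Gamma X,\Gamma Y)\}_{i\in I}$ is summable and $\Gamma\left(\sum_{i\in I}f_i\right)=\sum_{i\in I}\Gamma(f_i)$. Applying $\Delta$, which is a PCM-functor, to this summable family of $\D$, we get that $\{\Delta(\Gamma(f_i))\in\E(\Delta\Gamma X,\Delta\Gamma Y)\}_{i\in I}$ is summable and $\Delta\left(\sum_{i\in I}\Gamma(f_i)\right)=\sum_{i\in I}\Delta(\Gamma(f_i))$. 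Chaining these equalities,
\[ (\Delta\Gamma)\left(\sum_{i\in I}f_i\right) = \Delta\left(\Gamma\left(\sum_{i\in I}f_i\right)\right) = \Delta\left(\sum_{i\in I}\Gamma(f_i)\right) = \sum_{i\in I}(\Delta\Gamma)(f_i), \]
so $\Delta\Gamma$ is a PCM-functor. Finally, since objects of $\CS$ are PCM-categories (a subclass of categories) and morphisms are PCM-functors (a subclass of functors), and we have just seen this subclass contains identities and is closed under composition, the associativity and unit laws are inherited from $\cat$, so $\CS$ is a well-defined category.

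I do not expect any genuine obstacle here; the statement is essentially a bookkeeping check entirely parallel to the earlier proof that $\PCM$ is a category. The only thing worth being slightly careful about is that in the composition step one must invoke the PCM-functor property of $\Delta$ on the family $\{\Gamma(f_i)\}_{i\in I}$, which is legitimate precisely because the PCM-functor property of $\Gamma$ already guarantees that family is summable in $\D$ — so the two conditions have to be applied in the right order, but there is nothing deeper to it.
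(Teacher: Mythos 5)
Your proposal is correct and follows essentially the same route as the paper: check identities are PCM-functors, check closure of PCM-functors under composition by applying the two summation-preservation conditions in sequence, and inherit associativity and unit laws from ordinary functor composition. The only difference is that you spell out the identity-functor and inheritance steps slightly more explicitly than the paper does.
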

\begin{proof}
First note that identity functors on PCM-categories are trivially PCM-functors. To prove compositionality, consider two PCM-functors $\Gamma\in \CS (\CC,\D)$ and $\Delta \in \CS(\D,\E)$. By definition, for any summable family $\{ f_i\in \CC(X,Y) \}_{i\in I}$, the family $\{ \Gamma(f_i ) \in \D(\Gamma(X),\Gamma(Y)) \}_{i\in I}$ is summable, as is $\{ \Delta \Gamma (f_i) \in \E (\Delta \Gamma (X) ,\Delta \Gamma (Y)) \}_{i\in I}$. Then, also by definition of PCM-functors, 
\[ \Delta \left( \Gamma \left( \sum_{i\in I} f_i \right) \right) \ = \ 
\Delta \left( \sum_{i\in I} \Gamma (f_i) \right) = \sum_{i\in I} \Delta\Gamma (f_i) \]
and hence $\Delta\Gamma$ is also a PCM-functor.
Finally,  associativity follows from the usual associative property for functors, and thus $\CS$ is well-defined. 
\end{proof}

We also have finite products of PCM-categories:
\begin{proposition}\label{finprod}
The category $\CS$ has finite products.
\end{proposition}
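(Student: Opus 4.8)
The plan is to exhibit an explicit terminal object and explicit binary products, since a category has all finite products as soon as it has these two.

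For the terminal object I would take the one-object PCM-category $\mathbf 1$ with $\mathbf 1(\star,\star)=\{1_\star\}$, equipped with the \emph{total} summation in which every countably indexed family over the singleton is summable with sum $1_\star$. Checking that $\left(\mathbf 1(\star,\star),\Sigma\right)$ is a PCM is immediate: the unary sum axiom and weak partition-associativity both collapse to the equation $1_\star=1_\star$. Strong distributivity is equally trivial, the only composite being $1_\star 1_\star=1_\star$. For any PCM-category $\CC$ the unique functor $\CC\to\mathbf 1$ of $\cat$ is automatically a PCM-functor, because every family of $\mathbf 1$ is summable and all sums in $\mathbf 1$ coincide; and it is unique as a PCM-functor because it is already unique in $\cat$.

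For binary products, given PCM-categories $\CC$ and $\D$ I would equip the product category $\CC\times\D$ of $\cat$ (objects and morphisms are pairs, composition taken coordinatewise) with the summation that declares a family $\{(f_i,g_i)\}_{i\in I}$ of $(\CC\times\D)\big((X,X'),(Y,Y')\big)$ summable exactly when $\{f_i\}_{i\in I}$ is summable in $\CC(X,Y)$ \emph{and} $\{g_i\}_{i\in I}$ is summable in $\D(X',Y')$, and in that case sets $\sum_{i\in I}(f_i,g_i)=\left(\sum_{i\in I}f_i,\sum_{i\in I}g_i\right)$. The verification then splits into three routine parts: (i) each hom-set is a PCM, since the unary sum axiom and weak partition-associativity hold coordinatewise in $\CC$ and $\D$; (ii) strong distributivity holds, obtained by unpacking a pair of summable families of $\CC\times\D$ into their two coordinates, applying strong distributivity in $\CC$ and in $\D$ separately, and recombining; (iii) the two projection functors $\pi_\CC$ and $\pi_\D$ are PCM-functors, which is immediate from the definition of summability in $\CC\times\D$.

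Finally I would check the universal property. Given PCM-functors $\Gamma\colon\E\to\CC$ and $\Delta\colon\E\to\D$, the induced functor $\langle\Gamma,\Delta\rangle\colon\E\to\CC\times\D$ of $\cat$ sends a summable family $\{h_i\}_{i\in I}$ of $\E$ to $\{(\Gamma h_i,\Delta h_i)\}_{i\in I}$, which is summable in $\CC\times\D$ precisely because $\{\Gamma h_i\}_{i\in I}$ and $\{\Delta h_i\}_{i\in I}$ are summable; and $\langle\Gamma,\Delta\rangle$ preserves the sum coordinatewise because $\Gamma$ and $\Delta$ do. Hence $\langle\Gamma,\Delta\rangle$ is a PCM-functor, and it is the unique such since it is the unique mediating functor already in $\cat$. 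No step presents a genuine obstacle; the only points that require care are pinning down that summability in the product must be \emph{coordinatewise} summability, so that $\Sigma$ is a well-defined partial function on each hom-set, and observing that strong distributivity in $\CC\times\D$ really reduces to strong distributivity in each factor rather than to mere bilinearity.
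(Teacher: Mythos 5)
Your construction of binary products is essentially the paper's: the product category of $\cat$ with coordinatewise summability and coordinatewise sums, projections and $\langle\Gamma,\Delta\rangle$ checked to be PCM-functors, uniqueness inherited from $\cat$. Two points where you go beyond the paper's own proof, both to your credit: you explicitly verify that the hom-sets of $\CC\times\D$ are PCMs and that strong distributivity holds coordinatewise (the paper only defines the summation and then passes to the universal property, leaving these checks implicit), and you supply the terminal object, i.e.\ the empty product, which the paper's proof does not address at all even though ``finite products'' includes it; your degenerate one-object PCM-category $\mathbf 1$ with the total summation (in which the zero morphism coincides with $1_\star$) does the job, and the unique functor into it is trivially a PCM-functor. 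One further small difference: the paper treats the empty indexing set $I$ as a special case, decreeing $\sum_{i\in\emptyset}f_i=(0_{AB},0_{XY})$, whereas your uniform coordinatewise definition already yields this, since by Proposition \ref{basics} the empty family is summable with sum $0$ in every PCM; so the special case is subsumed rather than omitted. In short, the proposal is correct, follows the paper's route for the binary case, and is somewhat more complete than the printed proof.
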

\begin{proof}
Consider $\CC , \D \in Ob(\CS)$. We define their product $\CC\times \D$ in a similar way to the usual product of categories: objects are pairs $(A,X)$, where $A\in Ob(\CC)$ and $X\in Ob(\D)$. The homset $(\CC\times \D)((A,X),(B,Y))$ is exactly the Cartesian product $\CC(A,B) \times \D(X,Y)$, with the usual  component-wise composition. 

It remains to consider summation on homsets. The projections $\pi_1 : \CC \times \D \rightarrow \CC$, and $\pi_2 : \CC\times \D \rightarrow \D$ are defined exactly as in the usual product of categories. For non-empty $I$, a family $\{ f_i \in (\CC\times \D) ((A,X),(B,Y)) \}_{i\in I}$ is summable exactly when 
\[ \{ \pi_1(f_i) \in \CC(A,B) \}_{i\in I} \ \mbox{ and } \ \{ \pi_2 (f_i ) \in \D(X,Y) \}_{i\in I} \]
are summable, in which case 
\[ \sum_{i\in I} f_i \ = \ \left( \sum_{i\in I} \pi_1(f_i) \ , \ \sum_{i\in I} \pi_2 (f_i) \right)  \in (\CC \times \D) ((A,X),(B,Y)) \]
When $I$ is empty, we simply take $\sum_{i\in I} f_i = (0_{AB},0_{XY})$.

We now demonstrate that this definition satisfies the required universal property for a categorical product: Given PCM-functors $\Gamma_1\in \CS({\mathcal X},\CC)$ and $\Gamma_2\in \CS({\mathcal X},\D)$, we define  $\langle \Gamma_1 , \Gamma_2 \rangle : {\mathcal X} \rightarrow \CC \times \D$
 by 
 \begin{itemize}
 \item {\em On objects}  $\langle \Gamma_1 , \Gamma_2 \rangle (R) = ( \Gamma_1(R),\Gamma_2(R))$, for all $R\in Ob({\mathcal X})$.
 \item {\em On arrows}  $\langle \Gamma_1 , \Gamma_2 \rangle (f) = ( \Gamma_1(f),\Gamma_2(f))\in (\CC\times \D)((\Gamma_1(R),\Gamma_2(R)),(\Gamma_1(S),\Gamma_2(S))$, for all $f\in {\mathcal X}(R,S)$.
 \end{itemize}
 Functoriality of $\langle \Gamma_1 , \Gamma_2 \rangle$ is immediate. To demonstrate that it is also a PCM-functor, consider a summable family $\{ f_i \in {\mathcal X}(R,S) \}_{i\in I}$. Then 
 \[ \{ \langle \Gamma_1 , \Gamma_2 \rangle (f_i) \}_{i\in I} \ = \ \{ ( \Gamma_1 (f_i) , \Gamma_2 (f_i))  \}_{i\in I} \]
 which is summable by definition of summability in $\CC\times \D$. By the definition of summation in $\CC\times \D$,
 \[ \sum_{i\in I} \langle \Gamma_1 , \Gamma_2 \rangle (f_i) \ = \ \left( \sum_{i\in I} \Gamma_1(f_i) \ , \ \sum_{i\in I} \Gamma_2 (f_i) \right) \]
 and thus $\langle \Gamma_1 , \Gamma_2 \rangle$ is also a PCM-functor.
Finally, by the usual theory of product categories, the following diagram commutes:
\[ \diagram
& {\mathcal X} \dlto_{\Gamma_1} \drto^{\Gamma_2} \dto|{\langle \Gamma_1,\Gamma_2 \rangle} \\
\CC & \CC\times \D \lto^{\pi_1} \rto_{\pi_2} & \D \\
\enddiagram \]
\end{proof}

\section{The categorical Cauchy product}
We are now in a position to introduce a categorical analogue of the monoid semiring construction of Definition \ref{msr}. In honour of the original axiomatisation of such products in the theory of formal power series, we refer to this as the {\em (categorical) Cauchy product\footnote{Some new terminology is certainly needed. Starting from the theory of monoid semirings, we will replace both monoids and semirings with categories. However, we wish to avoid replacing the term `monoid-semiring' by `category-category'.}}. However, we first require the following preliminary definition:

\begin{definition}\label{countablecat}{\em Locally countable categories}\\
We say that a category $\D$ is {\bf locally countable} when, for all $U,V\in Ob({\D})$, the homset $\D(U,V)$ is a countable set. We denote the full subcategory of {\bf Cat}, whose objects are locally countable categories, by $\CAT$. 
\end{definition}

\begin{definition}\label{catcauchy}
Given a PCM-category $\CC \in Ob(\CS )$ and a locally countable category $\D \in Ob(\CAT)$, we define their {\bf Cauchy product} $\CC [\D ]\in Ob(\CS )$ as follows:

\begin{itemize}
\item {\bf Objects} $Ob(\CC [\D ]) = Ob(\mathcal C)\times Ob(\mathcal D)$
\item {\bf Arrows} The homset $\CC [\D ]((X,U),(Y,V))$ consists of all functions 
\[ f : {\mathcal D}(U,V)\rightarrow {\mathcal C}(X,Y) \]
such that $\{ f(a) \in {\mathcal C}(X,Y) \}_{a\in {\mathcal D}(U,V)}$ is a summable family.
\item {\bf Composition} Given $g\in \CC [\D ] ((Y,V),(Z,W))$ and $f\in \CC [\D ]((X,U),(Y,V))$
as functions 
\[ f:\D(U,V)\rightarrow {\CC}(X,Y) \ \mbox{ and } \ g:\D(V,W)\rightarrow {\CC}(Y,Z) \]
then $gf\in \CC [\D ]((X,U),(Z,W))$ is the function from $\D(U,W)$ to ${\CC}(X,Z)$ given by:
\[ gf (c) = \sum_{\{ (b,a) :c=ba \} \subseteq {\D(V,W)\times \D(U,V)} } g(b)f(a)  \]
For clarity, we will often use the shorthand notation 
\[ gf (c) = \sum_{c=ba  } g(b)f(a)  \]
\item {\bf Summation} An indexed family $\{ f_i \in \CC [\D ] ((X,U),(Y,V))\}_{i\in I}$ is summable exactly when 
\[ \{ f_i (h) \in {\CC}(X,Y) \}_{(i,h)\in I \times \D(U,V)} \ \mbox{ is summable in } {\CC} \]
in which case 
\[ \left( \sum_{i\in I}f_i \right)(h) \ \stackrel{def.}{ = } \ \sum_{i\in I}  f_i(h) \ \in \ {\CC}(X,Y) \]
\end{itemize}
\end{definition}

\paragraph*{\bf Terminology} In the above definition of the Cauchy product $\CC [\D ]$, we refer to the PCM-category ${\CC}\in Ob(\CS)$ as the {\bf base category} and the locally countable category $\D\in Ob(\CAT)$ as the {\bf index category}.\\

\noindent
We now prove that the above construction is well-defined: 
\begin{theorem}\label{cauchyproof}The Cauchy product $\CC [\D ]$ defined above is a {\bf PCM}-category.
\end{theorem}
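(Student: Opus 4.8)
The plan is to verify the two defining axioms of a PCM-category (Definition \ref{PCMcats}) for the structure $\CC[\D]$, namely that every hom-set carries a PCM structure and that strong distributivity holds, while also checking along the way that composition is well-defined and associative so that $\CC[\D]$ is genuinely a category. Throughout, the key technical device is Proposition \ref{reorderingsummations} together with the weak partition-associativity axiom, which let me re-index and re-partition the doubly- (or triply-) indexed families of arrows in $\CC$ that arise when unravelling composition in $\CC[\D]$.

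First I would check that composition is well-defined: given $f\in\CC[\D]((X,U),(Y,V))$ and $g\in\CC[\D]((Y,V),(Z,W))$, the family $\{g(b)f(a)\}_{(b,a)}$ indexed over $\D(V,W)\times\D(U,V)$ is summable by strong distributivity (applied to the summable families $\{f(a)\}_a$ and $\{g(b)\}_b$), hence so is each subfamily indexed by a fixed composite $c=ba$ (Proposition \ref{basics}(1)), so $gf(c)$ is defined; and the whole family $\{gf(c)\}_{c\in\D(U,W)}$ is summable because the sets $\{(b,a):ba=c\}$ for $c\in\D(U,W)$ form a countable partition of $\D(V,W)\times\D(U,V)$, so weak partition-associativity applies. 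The identity arrow at $(X,U)$ is the function sending $1_U\mapsto 1_X$ and every other arrow of $\D(U,U)$ to $0$; that this is a unit follows from Proposition \ref{basics}(3) (sums of zeros) and the unary sum axiom. Associativity $(hg)f=h(gf)$ reduces, after expanding both sides, to comparing two sums of the triply-indexed family $\{h(c)g(b)f(a)\}$ over triples with $dcba$-type constraints; both equal the single sum over $\{(c,b,a):cba=e\}$ by Proposition \ref{reorderingsummations} and weak partition-associativity, using that composition in $\D$ is associative.

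Next, the PCM axioms on $\CC[\D]((X,U),(Y,V))$: the unary sum axiom is immediate from the pointwise definition $(\sum_i f_i)(h)=\sum_i f_i(h)$ and the unary sum axiom in $\CC$. For weak partition-associativity, given a summable family $\{f_i\}_{i\in I}$ and a countable partition $\{I_j\}_{j\in J}$, summability of $\{f_i(h)\}_{(i,h)\in I\times\D(U,V)}$ in $\CC$ together with the induced partition of $I\times\D(U,V)$ gives, via weak partition-associativity in $\CC$, that each $\{f_i\}_{i\in I_j}$ is summable, that $\{\sum_{i\in I_j}f_i\}_{j\in J}$ is summable (one checks the defining pointwise-summability condition holds), and that the sums agree pointwise, hence agree. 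Finally, strong distributivity: given summable families $\{f_i\}_{i\in I}$ in $\CC[\D]((X,U),(Y,V))$ and $\{g_j\}_{j\in J}$ in $\CC[\D]((Y,V),(Z,W))$, I must show $\{g_jf_i\}_{(j,i)}$ is summable with sum $(\sum_j g_j)(\sum_i f_i)$; expanding via the composition formula, $(g_jf_i)(c)=\sum_{c=ba}g_j(b)f_i(a)$, and both sides, evaluated at $c$, become sums of the family $\{g_j(b)f_i(a)\}$ indexed over $\{(j,i,b,a):ba=c\}$ — the equality then follows from strong distributivity in $\CC$ (to get summability of the full four-fold family) and repeated use of Proposition \ref{reorderingsummations} and weak partition-associativity to match the two bracketings.

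The main obstacle I anticipate is purely bookkeeping: keeping the nested indexing sets straight when proving associativity of composition and strong distributivity, where one has three or four indices tied together by composition constraints in $\D$, and one must justify each re-partition and re-ordering by an explicit appeal to weak partition-associativity or Proposition \ref{reorderingsummations}. There is no conceptual difficulty — every summability assertion is furnished by strong distributivity in $\CC$ plus the summable-subfamilies property — but care is needed to ensure that at each step the family one is partitioning has already been shown summable, since weak partition-associativity is only a one-sided implication and cannot be used to \emph{deduce} summability of a whole family from summability of its blocks.
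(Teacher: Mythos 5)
Your proposal is correct and follows essentially the same route as the paper's proof: strong distributivity in $\CC$ plus the summable-subfamilies property to establish well-definedness of composition, distributivity/re-indexing and weak partition-associativity applied to the triply-indexed family $\{h(c)g(b)f(a)\}$ (partitioned two ways over $\{(c,b,a):cba=d\}$) for associativity, the same identity arrows, and the same pointwise verification of the PCM axioms and strong distributivity on hom-sets via the four-fold family and its partition by composites $c\in\D(U,W)$. Your closing caution about the one-sidedness of weak partition-associativity is exactly the care the paper's argument exercises.
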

\begin{proof}{\em We first show that $\CC [\D ]$ is a category, and then consider the indexed summation on homsets.}\\
We demonstrate that the composition of $\CC [\D ]$ is well-defined, associative, and has identities:
\begin{enumerate}
\item {\bf Composition is well-defined} Given arrows 
\[ g\in \CC [\D ]((Y,V),(Z,W))\  \ \mbox{ and } \ \ f\in \CC [\D ]((X,U),(Y,V)) \] 
(i.e. functions $f : {\mathcal D}(U,V)\rightarrow {\mathcal C}(X,Y)$ and $g : {\mathcal D}(V,W)\rightarrow {\mathcal C}(Y,Z)$
where $\{ f(a) \in {\mathcal C}(X,Y) \}_{a\in {\mathcal D}(U,V)}$ and $\{ g(b) \in {\CC}(Y,Z) \}_{b\in \D(V,W)}$ are summable), we need to show that $(gf)(c)\in{\CC}(X,Z)$ exists, for all $c\in \D(U,W)$, and 
\[ \left\{ gf (c) = \sum_{c=ba} g(b)f(a) \right\} _{c\in \CC(X,Z)} \] 
is also a summable family.
By definition, $\sum_{a\in \D(U,V)} f(a) \in {\mathcal C}(X,Y)$ exists, as does
$\sum_{b\in \D(V,W)} g(b)\in {\mathcal C}(Y,Z)$.  The strong distributivity property thus implies the summability of the indexed family
\[ P = \{   g(b) f(a)\}_{(b,a)\in \D(V,W) \times  \D(U,V)} \]
together with the identity
\[ \left(\sum_{b\in \D(V,W)}  g(b) \right) \left(\sum_{a\in \D(U,V)} f(a)\right) = \sum \left( P \right)  \] 
Given some arbitrary $c\in \D(U,W)$, consider the  (possibly empty) subfamily of $P_c$ of $P$ given by $\{ g(b)f(a) \}_{ ba=c }$. This is a subfamily of $P$,  and thus is itself a summable family, by the subfamilies property of Proposition \ref{basics}. Therefore $(gf)(c)\in{\CC}(X,Z)$ is well-defined, for all $c\in \D(U,W)$. 

Finally, consider the family $\{ P_c \}_{c\in \D(U,W)}$. Observe that, for distinct $x\neq y \in \D(U,W)$, the intersection of $P_x$ and $P_y$ is empty. Thus, $\{ P_c \}_{c\in \D(U,W)}$ is a partition of the summable family $P$, and by the weak partition-associativity axiom is itself a summable family satisfying 
\[ \sum_{c\in \D(U,W)} P_c = \sum_{(b,a)\in \D(V,W)\times \D(U,V)} g(b)f(a) \]
\item {\bf Associativity of composition} \\
Consider arrows
\begin{itemize}
\item $h\in \CC[\D ]((Z,W),(T,P))$, 
\item $g\in \CC[\D ] ((Y,V),(Z,W))$, 
\item $f\in \CC[\D ]((X,U),(Y,V))$.  
\end{itemize}
By definition, 
\[ (hg)(r) = \sum_{\{(q,p):r=qp \}\subseteq \D (W,P)\times \D(V,W)} h(q)g(p) \] 
and similarly, 
\[ (gf)(c) = \sum_{\{ (b,a) : c=ba \}\subseteq \D(V,W)\times \D(U,V)} g(b)f(a). \]
Therefore, for all $\gamma\in \D(U,P)$, 
\[ \left( h(gf) \right) (\gamma) = \sum_{ \{ (\beta,\alpha) : \gamma=\beta\alpha \}\subseteq \D (W,P)\times \D(U,W) 
}  
h(\beta)\left( gf\right) (\alpha)  \]
which, by definition of the composite $gf\in \CC[\D]((X,U),(Z,W))$ is given by 
\[  \left( h(gf) \right) (\gamma) = \sum_{ \{ (\beta,\alpha) : \gamma=\beta\alpha \}\subseteq \D (W,P)\times \D(U,W) 
}  
h(\beta)\left( \sum _{\{ (b,a) : \alpha = ba \} \subseteq \D (V,W)\times \D (U,V) }  g(b)f(a) \right) .   \]
By distributivity (Proposition \ref{distrib}) we may write this as 
\[  \left( h(gf) \right) (\gamma) = \sum_{ \{ (\beta,\alpha) : \gamma=\beta\alpha \}\subseteq \D (W,P)\times \D(U,W) 
}  
\left( \sum _{\{ (b,a) : \alpha = ba \} \subseteq \D (V,W)\times \D (U,V) }  h(\beta)g(b)f(a) \right) .  \]

\[ \]
Conversely, $((hg)f)\in \CC[\D ]((X,U),(T,P))$ is given by, for all $\nu\in \D(U,P)$,
\[ ((hg)f)(\nu) = \sum_{ \{ (\mu,\lambda ):\nu=\mu\lambda \}\subseteq \D(V,P)\times \D(U,V) } (hg)(\mu)f(\lambda) \]
which, by definition of the composite $hg\in \CC[\D]((Y,V),(T,P))$ is given by 
\[  ((hg)f)(\nu) = \sum_{\{ (\mu,\lambda ):\nu=\mu\lambda \}\subseteq \D(V,P)\times \D(U,V) } \left(
\sum_{ \{ (c,b) : \mu=cb \}\subseteq \D (W,P)\times \D(V,W) } h(c)g(b) 
\right)f(\lambda) .
\]
Again by distributivity (Proposition \ref{distrib}) this may be written as 
\[  ((hg)f)(\nu) = \sum_{\{ (\mu,\lambda ):\nu=\mu\lambda \}\subseteq \D(V,P)\times \D(U,V) } \left(
\sum_{ \{ (c,b) : \mu=cb \}\subseteq \D (W,P)\times \D(V,W) } h(c)g(b) 
f(\lambda) \right)
\]

Now observe that, by definition of arrows of $\CC[\D]$, the families 
\begin{itemize}
\item $\{ h(c) \in \CC(Z,T) \}_{c\in \D(W,P)} $
\item $\{ g(b)\in \CC(Y,Z)  \}_{b\in \D(V,W)}$
\item $\{ f(a) \in \CC((X,Y) \}_{a\in \D(U,V)}$
\end{itemize}
are all summable. Therefore, by the strong distributivity property, the family
\[ \{ h(c)g(b)f(a) \} _{(c,b,a)\in \D(W,P)\times \D(V,W)\times \D(U,V)} \]
is summable. Given arbitrary $d\in \D(U,P)$, let $Q_d$ be the subfamily of the above indexing set given by 
\[ Q_d= \{ (c,b,a) : d=cba \} \subseteq \D(W,P)\times \D(V,W)\times \D(U,V)  . \]
Then by the summable subfamilies property, this is summable. 
By the weak partition-associativity axiom, we may partition 
$\sum_{Q_d} h(c)g(b)f(a)$ in two distinct ways --- by relabelling indices these may be seen to correspond to $((hg)f)(d)$ and $(h(gf))(d)$ respectively. Thus $((hg)f)(d)=(h(gf))(d)$, for all $d\in \D(U,P)$, and thus $(hg)f = h(gf)\in \CC[\D] ((X,U),(T,P))$, as required.
\item {\bf Identity arrows} Recall the existence of zero elements in a PCM, from Proposition \ref{basics}, and the proof that PCM-categories have zero arrows, in Corollary \ref{zeroarrows}.   
At an object $(X,U)\in Ob(\CC [\D ])$, the identity arrow is given by $1_{(X,U)}$ by 
\[ 1_{(X,U)}(r) = \left\{ \begin{array}{lr} 1_X \in {\CC}(X,X) & r=1_U\in \D(U,U) \\ 0_X & \mbox{otherwise.} \end{array}\right. \]
From the definition of composition, and Proposition \ref{basics}, for all $g\in \CC[\D] ((X,U),(Y,V))$ and $f\in \CC[\D]((W,T),(X,U))$,
\[ \left(g1_{(X,U)}\right)(s) = g(s) \ \ \forall s\in \D(U,V) \]
and 
\[ \left(1_{(X,U)}f\right) (r) =f(r) \ \ \forall r\in \D( T,U) \]
Thus $1_{(X,U)}\in \CC[\D]((X,U),(X,U))$ is the identity, as required.
\end{enumerate}
It now remains to show that $\CC[\D]$ is not only a category, but a PCM-category:
\begin{enumerate}
\item {\bf Hom-sets are PCMs} Given objects $(X,U),(Y,V)\in Ob(\CC[\D])$, we now demonstrate the summation given in Definition \ref{catcauchy} above gives a PCM structure to $\CC[D]((X,U),(Y,V))$.
\begin{itemize}
\item {\bf The unary sum axiom} \\
Consider an indexed family 
\[ \{ f_i\in \CC[\D]((X,U),(Y,V)) \}_{i\in \{ i'\} } \ \mbox{ where } \  f_{i'}=f . \]
 We first demonstrate that $\{ f_i(a) \in \CC(X,Y) \}_{(i,a)\in \{ i'\} \times \D(U,V)}$ is summable in $\CC$. As $\{ i'\}$ is a single-element set, $\{ i'\} \times \D(U,V)\cong \D$, and (trivially) $f_i=f$, for all $i\in \{ i'\}$. Therefore, by Proposition \ref{welldefined} the summability of $\{ f_i(a) \in \CC(X,Y) \}_{(i,a)\in \{ i'\} \times \D(U,V)}$ is equivalent to the summability of $\{ f(a) \in \CC(X,Y) \}_{a\in \D(U,V)}$, and this is summable by the definition of arrows in $\CC[\D]$. Thus singleton families are summable in $\CC[\D]((X,U),(Y,V))$. Finally, by definition of the summation of $\CC[\D]$, and the unary sum axiom for the PCM $\left(\CC(X,Y) , \Sigma^{X,Y}\right)$, 
 \[ \left( \sum_{i\in \{ i'\} } f_i \right) (a) = \sum_{i\in \{ i'\} } f_i (a) = f(a) \]
 and therefore $\sum_{i\in \{ i'\} } f_i = f\in \CC[\D]((X,U),(Y,V))$.
\item {\bf Weak partition-associativity}\\ 
Consider a summable family $\{ f_i \in \CC[\D]((X,U),(Y,V)) \}_{i\in I}$, and let $\{ I_j \}_{j\in J}$ be a partition of $I$. By definition of summability in $\CC[\D]$,  the family $\{ f_i(a) \in \CC(X,Y) \}_{(i,a)\in I\times \D(U,V)}$ is summable in $\CC$. Now consider the family $\{ f_{i'}(a) \in \CC(X,Y) \}_{(i',a)\in I_j \times \D(U,V) }$. This is a subfamily of a summable family of $\CC(X,Y)$ and thus, by the summable subfamilies property (Proposition \ref{basics}) is itself a summable family. Therefore, by definition of summability in $\CC[\D]$, the family $\{ f_{i'} \in \CC[\D] ((X,U),(Y,V))\}_{i\in I'}$ is summable. 

Similarly, to show that 
$\sum_{j\in J} \left( \sum_{i'\in I_j} f_{i'} \right)$ is summable in $\CC[\D]((X,U),(Y,V))$, note that $\{ \sum_{i'\in I_j} f_{i'}(a) \in \CC(X,Y) \}_{(j,a)\in J\times \D(U,V)}$ is summable, by the weak partition-associativity axiom for the PCM $\left( \CC(X,Y),\Sigma^{X,Y} \right)$, and (again, by WPA), 
\[ \left( \sum_{j\in J} \left( \sum_{i'\in I_J} f_{i'} \right) \right) (a) = \left( \sum_{i\in I} f_i \right) (a) \ \in \CC(X,Y)  \]
 for all $a\in \D(U,V)$,
and thus 
\[ \left( \sum_{j\in J} \left( \sum_{i'\in I_J} f_{i'} \right) \right) = \left( \sum_{i\in I} f_i \right) \ \in \ \CC[\D]((X,U),(Y,V)) . \]
\end{itemize}
Therefore, the summation on $\CC[\D]((X,U),(Y,V))$ satisfies weak partition-associativity.
\item {\bf The strong distributive law}\\
Consider summable families of $\CC[\D]$
\[ \{ f_i \in \CC[\D] ((X,U),(Y,V)) \}_{i\in I} \ \mbox{ and } \ \{ g_j \in \CC[\D]((Y,V),(Z,W))\}_{j\in J} . \]
Summability of these families is equivalent to the summability of the following families in $\CC$
\[ \{ f_i(a) \in \CC (X,Y) \}_{(i,a)\in I\times \D(U,V)} \ \mbox{ and } \ \{ g_j(b) \in \CC(Y,Z)\}_{(j,b)\in J\times \D(V,W)} . \]
By the strong distributivity law for $\CC$, the following family is therefore summable:
\[ \{ g_j(b)f_i(a) \in \CC(X,Z) \}_{(j,b,i,a)\in J\times \D(V,W) \times I \times \D(U,V)} . \]
For all $c\in \D(U,W)$, consider the (possibly empty) subset 
\[ P_c= \{ (j,b,i,a) : c=ba \} \subseteq J\times \D(V,W)\times I \times \D(U,V) . \]
Note that $P_c\cap P_{c'}=\emptyset$, for all $c\neq c'$, and 
\[ \bigcup_{c\in \D(U,W)} P_c = J\times \D(V,W)\times I \times \D(U,V) \]
giving a $\D(U,W)$-indexed partition of the summable family 
\[ \{ g_j(b)f_i(a) \in \CC(X,Z) \}_{(j,b,i,a)\in J\times \D(V,W) \times I \times \D(U,V)}  . \]
Thus, by the weak partition-associativity property of $\left( \CC(X,Z),\Sigma^{X,Y} \right) $, the family 
\[ \{ g_j(b)f_i(a) \in \CC(X,Z) : ba=c \}_{(j,c,i)\in J\times \D(U,W) \times I } \]
is summable, demonstrating that $\{ g_jf_i \in \CC[\D]((X,U),(Y,V)) \}_{(j,i)\in J\times I} $ is summable in $\CC[\D]$, as required.

For all $c\in \D(U,W)$, the identity 
\[ \left( \sum_{j\in J} g_j \right) \left( \sum_{i\in I} f_i \right)(c) = \left( \sum_{(j,i)\in J\times I} g_jf_i \right) (c) \ \in \CC(X,Z) \]  
is then immediate from the existence of both sides of this equation, and the strong distributivity law for $\CC$, and so
\[ \left( \sum_{j\in J} g_j \right) \left( \sum_{i\in I} f_i \right) = \left( \sum_{(j,i)\in J\times I} g_jf_i \right)  \ \in \CC[\D] ((X,U),(Z,W)) \]
as required.
\end{enumerate}
\end{proof}

\subsection{The Cauchy product as a bifunctor}

\begin{theorem}
The Cauchy product of Definition \ref{catcauchy} defines a bifunctor 
\[ (\underline{\ \ })[\underline{\ \ }] : \CS\times \CAT
\rightarrow \CS \]
That is
\begin{enumerate}
\item Given $\D\in Ob(\CAT)$, then $(\underline{\ \ })[\D] :\CS\rightarrow \CS$ is a functor.
\item  Given $\CC\in Ob({\CS})$, then $\CC [\underline{ \ \ }] : \CAT\rightarrow \CS$ is a functor.
\end{enumerate}
\end{theorem}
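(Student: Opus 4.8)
The plan is to realise $(\underline{\ \ })[\underline{\ \ }]$ as a functor out of $\CS\times\CAT$ by specifying its action on morphisms in each variable separately, checking each is functorial, and recording the interchange law that glues the two into a genuine bifunctor. On objects the action is fixed by Definition \ref{catcauchy}, and Theorem \ref{cauchyproof} already guarantees $\CC[\D]\in Ob(\CS)$; so only the morphism parts require work. For a PCM-functor $\Gamma:\CC\to\CC'$ I would define $\Gamma[\D]:\CC[\D]\to\CC'[\D]$ by $(X,U)\mapsto(\Gamma X,U)$ on objects and, for an arrow $f:\D(U,V)\to\CC(X,Y)$, by $(\Gamma[\D](f))(a)=\Gamma(f(a))$; for a functor $\Phi:\D\to\D'$ in $\CAT$ I would define $\CC[\Phi]:\CC[\D]\to\CC[\D']$ by $(X,U)\mapsto(X,\Phi U)$ and $(\CC[\Phi](f))(b)=\sum_{\{a\in\D(U,V)\,:\,\Phi(a)=b\}}f(a)$ for $b\in\D'(\Phi U,\Phi V)$.

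For part (1) the verifications are light because a PCM-functor transports sums verbatim. Since $\{f(a)\}_{a\in\D(U,V)}$ is summable, so is $\{\Gamma(f(a))\}_{a\in\D(U,V)}$ in $\CC'$, hence $\Gamma[\D](f)$ is an arrow of $\CC'[\D]$; that $\Gamma[\D]$ is itself a PCM-functor is then immediate from the definition of summability in a Cauchy product applied index-wise. Preservation of composition follows from $\Gamma(gf(c))=\Gamma\bigl(\sum_{c=ba}g(b)f(a)\bigr)=\sum_{c=ba}\Gamma(g(b))\Gamma(f(a))$, using functoriality of $\Gamma$ and the PCM-functor property on the (possibly infinite) sum; preservation of identities follows from $\Gamma(1_X)=1_{\Gamma X}$ and $\Gamma(0_X)=0_{\Gamma X}$ (the latter because PCM-functors preserve the empty sum), matched against the identity arrows of Theorem \ref{cauchyproof}. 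Finally $(1_\CC)[\D]=1_{\CC[\D]}$ and $(\Gamma'\Gamma)[\D]=\Gamma'[\D]\,\Gamma[\D]$ are read off directly.

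For part (2), well-definedness of $\CC[\Phi](f)$ uses that the fibres of $\Phi$ partition $\D(U,V)$ (with empty cells allowed for $b$ outside the image) and that $\D'(\Phi U,\Phi V)$ is countable since $\D'\in Ob(\CAT)$: weak partition-associativity applied to the summable family $\{f(a)\}_{a\in\D(U,V)}$ then makes each $\CC[\Phi](f)(b)$ exist and $\{\CC[\Phi](f)(b)\}_{b\in\D'(\Phi U,\Phi V)}$ summable, so $\CC[\Phi](f)$ is an arrow of $\CC[\D']$; the same fibre-partition argument, applied to a summable family $\{f_i\}_{i\in I}$, shows $\CC[\Phi]$ is a PCM-functor. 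Preservation of identities uses $\Phi(1_U)=1_{\Phi U}$ together with the ``sums of zeros'' clause of Proposition \ref{basics}. Preservation of composition is the substantive step: expanding $\CC[\Phi](gf)$ and $\CC[\Phi](g)\circ\CC[\Phi](f)$, one shows each equals $\sum\{\,g(b)f(a):(b,a)\in\D(V,W)\times\D(U,V),\ \Phi(ba)=\gamma\,\}$ — for the second composite one invokes strong distributivity to pull the fibre-sums through the product $\CC[\Phi](g)(b')\cdot\CC[\Phi](f)(a')$ and then the functoriality identity $\Phi(b)\Phi(a)=\Phi(ba)$; both reorganisations are partitions of a single subfamily of the summable family $P$ from the proof of Theorem \ref{cauchyproof}, so weak partition-associativity (equivalently Propositions \ref{reorderingsummations} and \ref{welldefined}) yields equality. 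Functoriality in $\Phi$, namely $\CC[1_\D]=1_{\CC[\D]}$ and $\CC[\Phi'\Phi]=\CC[\Phi']\,\CC[\Phi]$, again follows by partitioning fibres of composites.

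I expect the composition-preservation step for $\CC[\Phi]$ to be the main obstacle: it requires matching two differently indexed iterated sums — one over factorisations of $\gamma$ in $\D$ and over $\Phi$-fibres, the other over factorisations of $\gamma$ in $\D'$ — and applying strong distributivity inside, all while staying inside the subfamily of $P$ on which summability is already known, so the bookkeeping of index sets and the order of the weak partition-associativity invocations must be done carefully; by contrast the $\Gamma$-side is routine. To finish, one records the interchange law $\CC'[\Phi]\circ\Gamma[\D]=\Gamma[\D']\circ\CC[\Phi]$ — both composites send $(X,U)$ to $(\Gamma X,\Phi U)$ and an arrow $f$ to $b\mapsto\sum_{\Phi(a)=b}\Gamma(f(a))$, using the PCM-functor property of $\Gamma$ — which is exactly the coherence assembling the separate functors of (1) and (2) into the single bifunctor $(\underline{\ \ })[\underline{\ \ }]:\CS\times\CAT\to\CS$.
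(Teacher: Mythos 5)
Your proposal is correct and follows essentially the same route as the paper: the morphism actions $\Gamma[\D](f)(a)=\Gamma(f(a))$ and $\CC[\Phi](f)(b)=\sum_{\Phi(a)=b}f(a)$ are exactly the paper's, with well-definedness via fibre partitions and weak partition-associativity, and composition-preservation via the PCM-functor property (first variable) and strong distributivity plus functoriality of $\Phi$ (second variable). Your additional checks of identity preservation and of the interchange law $\CC'[\Phi]\circ\Gamma[\D]=\Gamma[\D']\circ\CC[\Phi]$ are details the paper leaves implicit, not a different method.
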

\begin{proof}
\begin{enumerate}
\item
We first demonstrate that for arbitrary $\D\in \CAT$, the map $(\underline{\ \ }) [\D ] : \CS \rightarrow \CS$ defines a functor. 
\begin{itemize}
\item {\bf on Objects} Given a PCM-category $\CC \in Ob(\CS)$, then $\CC [\D ]\in Ob(\CS)$  is as defined in Definition \ref{catcauchy}.
\item {\bf on Arrows} Given $\Gamma \in \CS (\CC , \E)$, we define the functor $\left( \Gamma [\D] \right) \in \CS(\CC [\D ],\E [\D ])$ as follows:
\begin{itemize}
\item {\em on Objects} For all $(X,U)\in Ob(\CC [\D ])$, we define $\left( \Gamma [\D] \right) (X,U)=(\Gamma(X),U)$.
\item {\em on Arrows} Given an arbitrary arrow $f\in \CC [\D ]((X,U),(Y,V))$, 
we define $\left( \Gamma [\D] \right)(f) \in \E [\D] ((\Gamma(X),U),(\Gamma(Y),V))$ by, for all $r\in \D(U,V)$, 
\[ \left( \Gamma [\D] \right)(f)(r) = \Gamma (f(r)) \in \E(\Gamma(X),\Gamma(Y)) \]
It is immediate that this is well-defined as an arrow in $\E [\D ]((\Gamma(X),U),(\Gamma(Y),V))$ since, as  $\Gamma$ is a $\PCM$-functor (i.e. an arrow of $\CS(\CC,\E)$) 
\[ \sum_{r\in \D(U,V)} f(r) \ \mbox{ exists } \ \ \Rightarrow \ \ \sum_{r\in \D(U,V)} \Gamma (f(r)) \ \mbox{ exists.} \]
$ $ \\

To prove compositionality, consider 
\[ f\in \CC[\D] ((X,U),(Y,V)) \ \mbox{ and } \ g\in \CC[\D]((Y,V),(Z,W)) \]
By definition of composition, $gf(c) = \sum_{c=ba}g(b)f(a)$, for all $c\in \D(U,W)$. However, by definition of the functor $\left( \Gamma [\D] \right)$, 
\[ \left( \left( \Gamma [\D] \right)(g)\left( \Gamma [\D] \right)(f) \right) (c) = \sum_{c=ba} \Gamma(g(b))\Gamma(f(a)) \]
By functoriality of $\Gamma$, 
\[ \left( \left( \Gamma [\D] \right)(g)\left( \Gamma [\D] \right)(f) \right) (c) = \sum_{c=ba} \Gamma(g(b)f(a)) \]
and as $\Gamma$ is a PCM-functor, 
\[ \left( \left( \Gamma [\D] \right)(g)\left( \Gamma [\D] \right)(f) \right) (c) = \Gamma \left( \sum_{c=ba} g(b)f(a) \right) = \left( \Gamma [\D]\right) (gf)(c) \]
\end{itemize}
Finally, given another functor $\Delta \in \CS(\E,\F)$, then 
\begin{itemize}
\item On objects: 
\[ \left( \Delta [\D]\right) \left( \Gamma [\D]\right) (X,U) = (\Delta \Gamma (X),U) = \left( (\Delta \Gamma) [\D]\right) (X,U) \]
\item On arrows: given $f\in \CC [\D ]((X,U),(Y,V))$, then 
\[ \left( \Delta [\D]\right) \left( \Gamma [\D]\right) (f)(r) = \Delta (\Gamma (f))(r) = (\Delta \Gamma (f))(r) = 
\left( (\Delta \Gamma) [\D]\right) (f)(r) \]
\end{itemize}

\end{itemize}
\item We now demonstrate that for arbitrary $\CC\in \CS$, the map $\CC[\underline{ \ \ }] : \CAT\rightarrow \CS$ is also functorial.
\begin{itemize}
\item {\bf on Objects} Given arbitrary $\D\in Ob(\CAT)$, then $\CC [\D ]$ is given in Definition \ref{catcauchy}.
\item {\bf on Arrows} Given a functor $\Lambda\in \CAT(\D,\HH)$, we define $\CC[\Lambda ]\in \CS(\CC [\D ],\CC[\HH])$ by:
\begin{itemize}
\item {\em on Objects} $\CC[\Lambda] (X,U) = (X,\Lambda(U))$.
\item {\em on Arrows}, given $f\in \CC [\D ]((X,U),(Y,V))$, we define 
\[ \left( \CC[\Lambda]\right)(f) \in \CC[\HH] ((X,\Lambda(U)),(Y,\Lambda(V))) \]
by, for all $x\in \HH(\Lambda (U),\Lambda (V))$,
\[ \left( \CC[\Lambda ] \right) (f) (x) = \sum_{x=\Lambda (a) } f(a) \ \in \CC(X,Y) \]
 The above sum is well-defined, since $\{ f(a) \}_{a\in \D(U,V)}$ is a summable family. Also, note that $\{ x=\Gamma (a) \}_{x\in \HH (\Gamma (U),\Gamma( V))}$ is a partition of $\D(U,V)$, and thus, by the weak partition-associativity axiom, $\{ \left( \CC[\Lambda ] \right) (f) (x) \}_{x\in \HH(\Lambda (U),\Lambda (V))}$ is summable, and so $\left( \CC[\Lambda ] \right) (f) \ \in \ \CC[\HH]((X,\Lambda (U)),(Y,\Lambda (V)))$ is well-defined.\\
 
 To prove compositionality, consider 
 \[ f\in \CC[\D] ((X,U),(Y,V)) \ \mbox{ and } \ g\in \CC[\D]((Y,V),(Z,W)) \]
By definition of composition, $gf(c) = \sum_{c=ba}g(b)f(a)$, for all $c\in \D(U,W)$, and hence, for all $z\in \HH(\Lambda (U),\Lambda (W))$, 
\[ \left( \CC(\Lambda)(gf)\right) (z) = \sum_{z=\Lambda (c)} (gf)(c) \]
Now note that, for all $y\in \HH(\Lambda (V),\Lambda (W))$ and $x\in \HH (\Lambda (U),\Lambda (V))$,
\[ \left( \CC(\Lambda)(g)\right) (y) = \sum_{y=\Lambda (b)} g(b) \ \ \mbox { and } \ \ 
\left( \CC(\Lambda)(f)\right) (x) = \sum_{x=\Lambda (a)} f(c) \]
and thus, by the strong distributive law for PCM-categories, and the functoriality of $\Lambda$,  
\[ 
\left( \CC(\Lambda)(g)\right) \left( \CC(\Lambda)(f)\right) (z) = \sum_{z=\Lambda (c)} (gf)(c) = \left( \CC(\Lambda)(gf)\right) (z) \ \in \CC(X,Z) \] 
\end{itemize}
\end{itemize}
Finally, given another functor $\Omega \in \CAT(\HH,\K)$, then 
\begin{itemize}
\item On objects:
\[ \left( \CC [\Omega]\right) \left( \CC[\Lambda ] \right) (X,U) = (X,\Omega \Lambda (U)) = \left( \CC [\Omega \Lambda ] \right) (X,U) \]
\item On arrows: given $f\in \CC [\D ]((X,U),(Y,V))$, then for all $p\in \K (\Omega \Lambda (U), \Omega \Lambda (V))$, 
\[ \left( \CC[\Omega ]\right)\left( \CC[\Lambda ]\right) (f) (p) = \sum_{p=\Omega(x) \ , \ x=\Lambda (a)} f(a) = 
\sum_{p=\Omega\Lambda (a)} f(a) = \left( \CC[\Omega \Lambda ]\right) (f) (p) \]
\end{itemize}
\end{enumerate}
\end{proof}

\paragraph{\bf Is the Cauchy product a monoidal tensor?}
Since the Cauchy product is a bifunctor $\CS\times\CAT\rightarrow \CS$, it is natural to wonder whether, when restricted to locally countable PCM-categories, it is in fact a monoidal tensor. It is also easy to show that this is not the case: consider three locally countable PCM-categories, $\CC,\D,\E \in Ob(\CS)$, and denote their (object-indexed families of) summations by 
$\Sigma^{\CC(\underline{\ } , \underline{\ })}$, $\Sigma ^{\D(\underline{\ } , \underline{\ })} $, $\Sigma^{\E(\underline{\ } , \underline{\ })}$ respectively. Then it is immediate that the structure of $\CC [ \D[\E ]]$ depends on the family of summations $\Sigma^ {\D(\underline{\ } , \underline{\ })} $ on the homsets of $\D$, 
whereas the structure of $(\CC [\D ])[\E ]$ is independent of $\Sigma ^{\D(\underline{\ } , \underline{\ })}$. Therefore, in general, $(\CC [\D ])[\E ]$ cannot be equal to $\CC [ \D[\E ]]$, even up to a canonical isomorphism.

Rather, as we now demonstrate, there exist embeddings of $\CC$ into $\CC[\D]$ indexed by objects of $\D$, together with embeddings of $\D$ into $\CC[\D]$ indexed by objects of $\CC$, and an embedding of the product $\CC\times \D$ into $\CC [\D ]$. The embeddings of $\CC$ into $\CC[\D]$ also have a common left-inverse, giving an indexed family of retractions.

\section{Embedding the base category into a Cauchy product}
We now give an embedding of the base category $\CC$ into the Cauchy product $\CC [\D ]$, and show that $\CC$ is a retract of $\CC [\D ]$.

We first exhibit a forgetful functor from $\CC [\D ]$ to $\CC$:
\begin{definition} Given $\D\in Ob(\CAT)$, and ${\CC}\in Ob(\CS)$, we define $\sigma_{\CC,\D}:\CC [\D ]\rightarrow {\CC}$ by 
\begin{itemize}
\item {\bf (on objects)} $\sigma_{\CC,\D}(X,U)=X$, for all $(X,U)\in Ob(\CC [\D ])$.
\item {\bf (on arrows)} Given $h\in {\CC} [D] ((X,U),(Y,V))$,  then 
\[ \sigma_{\CC,\D}(h) = \sum_{a\in \D(U,V)} h(a) \ \in \ {\CC}(X,Y) . \]
\end{itemize}
\end{definition}

\begin{proposition} $\sigma_{\CC,\D} : \CC[\D] \rightarrow \CC$, as given above, is a PCM-functor.
\end{proposition}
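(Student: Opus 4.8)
The plan is to verify in turn that $\sigma_{\CC,\D}$ is a functor and then that it is a PCM-functor, in each case reducing everything to the two PCM axioms (Unary Sum and Weak Partition Associativity) together with strong distributivity in $\CC$; no new summability facts will need to be established from scratch, because every family that appears is either a subfamily or a partition-induced family of one already known to be summable.

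First, functoriality. For identities I would compute $\sigma_{\CC,\D}(1_{(X,U)}) = \sum_{a\in\D(U,U)} 1_{(X,U)}(a)$; since $1_{(X,U)}(a)$ equals $1_X$ when $a = 1_U$ and $0_X$ otherwise, the ``sums of zeros'' clause of Proposition~\ref{basics} gives $1_X + 0 + 0 + \cdots = 1_X = 1_{\sigma_{\CC,\D}(X,U)}$. For composition, given composable $f \in \CC[\D]((X,U),(Y,V))$ and $g \in \CC[\D]((Y,V),(Z,W))$, I would expand $\sigma_{\CC,\D}(gf) = \sum_{c\in\D(U,W)}\sum_{c=ba} g(b)f(a)$ and note that the cells $P_c = \{(b,a) : ba = c\}$ form a partition of $\D(V,W)\times\D(U,V)$, over which $\{g(b)f(a)\}$ is summable by strong distributivity (exactly as in the proof of Theorem~\ref{cauchyproof}). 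Weak partition associativity then collapses the iterated sum to $\sum_{(b,a)\in \D(V,W)\times\D(U,V)} g(b)f(a)$, and strong distributivity rewrites this as $\bigl(\sum_{b} g(b)\bigr)\bigl(\sum_{a} f(a)\bigr) = \sigma_{\CC,\D}(g)\,\sigma_{\CC,\D}(f)$.

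Second, the PCM-functor property. Let $\{f_i \in \CC[\D]((X,U),(Y,V))\}_{i\in I}$ be a summable family. By the definition of summation in $\CC[\D]$, the family $\{f_i(a) \in \CC(X,Y)\}_{(i,a)\in I\times\D(U,V)}$ is summable in $\CC$, and $\bigl(\sum_{i} f_i\bigr)(a) = \sum_{i} f_i(a)$ for every $a$. I would then apply weak partition associativity to this one summable family using two different partitions of $I\times\D(U,V)$. Partitioning into the slices $\{i\}\times\D(U,V)$ shows that $\{\sigma_{\CC,\D}(f_i)\}_{i\in I} = \{\sum_{a} f_i(a)\}_{i\in I}$ is summable, with sum $\sum_{(i,a)} f_i(a)$. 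Partitioning into the slices $I\times\{a\}$ shows $\sum_{(i,a)} f_i(a) = \sum_{a}\sum_{i} f_i(a) = \sum_{a}\bigl(\sum_{i} f_i\bigr)(a) = \sigma_{\CC,\D}\bigl(\sum_{i} f_i\bigr)$. Comparing the two evaluations yields $\sigma_{\CC,\D}\bigl(\sum_{i} f_i\bigr) = \sum_{i}\sigma_{\CC,\D}(f_i)$, which is what is required; alternatively this last step is a direct instance of the reordering identity of Proposition~\ref{reorderingsummations}.

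The only point needing a little care — rather than a genuine obstacle — is the identity case, since $\D(U,U)$ may be infinite and one really does need the ``infinite sum of zeros'' form of Proposition~\ref{basics} rather than just finite additivity of $0$; the remainder is a mechanical application of weak partition associativity and strong distributivity.
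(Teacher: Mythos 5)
Your proof is correct and follows essentially the same route as the paper's: well-definedness from the summability built into arrows of $\CC[\D]$, preservation of composition via strong distributivity plus weak partition-associativity applied to the partition $\{P_c\}_{c\in\D(U,W)}$, identities via the sums-of-zeros clause of Proposition~\ref{basics}, and preservation of sums by interchanging the two slicings of $I\times\D(U,V)$, which is exactly the paper's appeal to weak partition-associativity and Proposition~\ref{welldefined}. The only caveat is your closing aside: the interchange step is not literally an instance of Proposition~\ref{reorderingsummations} (which concerns composites $g_jf_i$ of two summable families rather than an arbitrary doubly-indexed summable family), but your explicit two-partition argument already does the job, so nothing is missing.
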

\begin{proof}
First note that, by definition of arrows in $\CC[\D]$,  the family 
$\{ h(a) \}_{a\in \D(U,V)}$ is summable for all $h\in {\CC} [D] ((X,U),(Y,V))$, and so $\sigma_{\CC,\D}(h) = \sum_{a\in \D(U,V)} h(a) \ \in \ {\CC}(X,Y)$ is well-defined.

To prove functoriality, consider $k\in \CC[\D]((Y,V),(Z,W))$. Then 
\[ \sigma(k)\sigma(h) = \left( \sum_{b\in \D(V,W)} k(b) \right)\left( \sum_{a\in \D(U,V)} h(a) \right) . \]
By the strong distributivity property, 
\[ \sigma(k)\sigma(h) =  \sum_{(b,a)\in \D(V,W)\times \D(U,V)} k(b)h(a)  . \]
Conversely, $kh\in \CC[\D]((X,U)(Z,W))$ is defined by, for all $c\in \D(U,W)$,
\[ (kh)(c) = \sum_{c=ba}k(b)h(a) .  \]
Now note that $\{ (kh)(c) \}_{c\in \D(U,W)}$ is a summable family, and  by weak partition-associativity,
\[ \sigma(kh) = \sum_{c=ba}k(b)h(a) =  \sum_{(b,a)\in \D(V,W)\times \D(U,V)} k(b)h(a)  = \sigma(k)\sigma(h) .  \]
Thus $\sigma : \CC[\D]\rightarrow \CC$ preserves composition. The proof that it also preserves identities follows from the formula for identities in PCM-categories given in Theorem \ref{cauchyproof}, 
\[ 1_{(X,U)}(r) = \left\{ \begin{array}{lr} 1_X \in {\CC}(X,X) & r=1_U\in \D(U,U) \\ 0_X & \mbox{otherwise.} \end{array}\right. \] 
It is immediate that $\sigma \left( 1_{(X,U)} \right) =1_X\in \CC(X,X)$.

Now consider a summable family $\{f_i \in \CC[\D]((X,U),(Y,V)) \}_{i\in I}$. By definition of summation in $\CC[\D]$,  the family $\{ f_i (a) \in {\CC}(X,Y) \}_{(i,a)\in I \times \D(U,V)}$  is summable in $\CC$, and 
again by definition 
\[ \left( \sum_{i\in I}f_i \right)(a) \  =  \ \sum_{i\in I}  f_i(a) \ \in \ {\CC}(X,Y) . \]
Thus, by weak partition-associativity, and Proposition \ref{welldefined},
\[  \sum_{i\in I} \sigma(f_i) = \sum_{i\in I} \left( \sum_{a\in \D(U,V)} f(a) \right)  = \sum_{a\in \D(U,V)} \left( \sum_{i\in I} f_i(a) \right) = \sigma \left( \sum_{i\in I} f_i \right) . \]
Therefore, $\sigma : \CC[\D]\rightarrow \CC$ is a PCM-functor.
\end{proof}

We now exhibit a family of embeddings of $\CC$ into $\CC [\D ]$, indexed by objects of $\D$:
\begin{definition}\label{baseinjection}
Let $\D$ be an arbitrary category, and let ${\CC}$ be a PCM-category. 
For all $U\in Ob(\D)$, we define $\eta_{\CC,U} :{\CC}\rightarrow \CC [\D ]$ by 
\begin{itemize}
\item {\bf (on objects)} $\eta_{\CC,U} (X)=(X,U)$, for all $X\in Ob({\CC})$.
\item {\bf (on arrows)} Given $h\in {\CC} (X,Y)$, then $\eta_{\CC, U}(h)\in \CC [\D ] ((X,U),(Y,U))$ is the function $\eta_{\CC,U} (h) : \D(U,U)\rightarrow {\CC}(X,Y)$ given by 
\[ \left( \eta_{\CC,U}(h)\right)  (a) = \left\{ \begin{array}{lr} h &  a=1_U \\ 0_{XY} & a\neq 1_U \end{array}\right. \]
\end{itemize}
\end{definition}
We prove that these maps are injective PCM-functors.
\begin{proposition} For all $U\in Ob(\D)$, the map 
$\eta_{\CC,U} :{\CC}\rightarrow \CC [\D ]$ defined above is an injective PCM-functor.
\end{proposition}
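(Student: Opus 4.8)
The plan is to check three things in turn: that $\eta_{\CC,U}$ is a well-defined functor, that it preserves summation (hence is a PCM-functor), and that it is injective on objects and on arrows. As a preliminary I would first note that each $\eta_{\CC,U}(h)$ really is an arrow of $\CC[\D]$: the family $\{\eta_{\CC,U}(h)(a)\}_{a\in\D(U,U)}$ takes the value $h$ at $a=1_U$ and the value $0_{XY}$ at every other $a$, and $\D(U,U)$ is countable (we are in the situation $\D\in Ob(\CAT)$), so this is the family ``$h$ followed by countably many zeros'', which is summable with sum $h$ by the generalised Sums-of-Zeros clause of Proposition \ref{basics}.

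For functoriality I would first handle identities by a direct comparison of formulas: by Definition \ref{baseinjection}, $\eta_{\CC,U}(1_X)$ sends $1_U\mapsto 1_X$ and everything else to $0_{XX}$, which is exactly the formula for the identity $1_{(X,U)}$ of $\CC[\D]$ recorded in Theorem \ref{cauchyproof}. For composition, given $h\in\CC(X,Y)$ and $k\in\CC(Y,Z)$, I would evaluate $\big(\eta_{\CC,U}(k)\eta_{\CC,U}(h)\big)(c)=\sum_{c=ba}\eta_{\CC,U}(k)(b)\,\eta_{\CC,U}(h)(a)$, which is a legitimate sum since composition in $\CC[\D]$ is well-defined by Theorem \ref{cauchyproof}. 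A term is a nonzero arrow only when $b=1_U$ and $a=1_U$, since otherwise one factor is a zero arrow and the whole product is a zero arrow by Corollary \ref{zeroarrows}; and $(b,a)=(1_U,1_U)$ forces $c=1_U$ with term $kh$. So by Sums-of-Zeros (Proposition \ref{basics}) the sum is $kh$ when $c=1_U$ and $0_{XZ}$ otherwise, i.e. it equals $\eta_{\CC,U}(kh)(c)$, and compositionality follows.

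The hard part is the PCM-functor clause. Given a summable family $\{h_i\in\CC(X,Y)\}_{i\in I}$, the definition of summability in $\CC[\D]$ forces me to show that $\{\eta_{\CC,U}(h_i)(a)\in\CC(X,Y)\}_{(i,a)\in I\times\D(U,U)}$ is summable in $\CC$; this family is a copy of $\{h_i\}$ sitting on the slice $I\times\{1_U\}$ together with zeros everywhere else. The naive move ``summable family plus zeros is summable'' is not available, since the converse of weak partition-associativity is deliberately not assumed. Instead I would obtain summability by applying WPA \emph{to the already-summable family} $\{h_i\}_{i\in I}$: index a partition of $I$ by the countable set $J=I\times\D(U,U)$, assigning the singleton cell $\{i\}$ to the index $(i,1_U)$ and the empty cell to every index $(i,a)$ with $a\neq 1_U$ --- legitimate because countably many empty cells are permitted. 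WPA then yields that $\big\{\sum_{i'\in\text{cell}}h_{i'}\big\}_{(i,a)\in J}$ is summable, and this family is precisely $\{\eta_{\CC,U}(h_i)(a)\}_{(i,a)\in J}$, with total sum $\sum_{i\in I}h_i$. Hence $\{\eta_{\CC,U}(h_i)\}_{i\in I}$ is summable in $\CC[\D]$, and evaluating $\big(\sum_{i}\eta_{\CC,U}(h_i)\big)(a)=\sum_{i}\eta_{\CC,U}(h_i)(a)$ pointwise gives $\sum_i h_i$ at $a=1_U$ and $0_{XY}$ at every other $a$ (Sums-of-Zeros again), which is exactly $\eta_{\CC,U}\big(\sum_i h_i\big)$.

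Injectivity is then immediate: on objects $\eta_{\CC,U}(X)=(X,U)$, so $X$ is recoverable, and since $\eta_{\CC,U}(h)(1_U)=h$ the arrow $h$ can be read off from $\eta_{\CC,U}(h)$, so $\eta_{\CC,U}$ is faithful and injective on objects. Equivalently, I could invoke the forgetful functor $\sigma_{\CC,\D}$ from the preceding proposition and observe $\sigma_{\CC,\D}\circ\eta_{\CC,U}=\mathrm{id}_{\CC}$, since $\sigma_{\CC,\D}(\eta_{\CC,U}(h))=\sum_{a\in\D(U,U)}\eta_{\CC,U}(h)(a)=h+0+0+\cdots=h$; thus $\eta_{\CC,U}$ is a split monomorphism in $\CS$ and in particular injective.
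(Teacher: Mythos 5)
Your proof is correct and follows essentially the same route as the paper: well-definedness and functoriality by evaluating $\eta_{\CC,U}(k)\eta_{\CC,U}(h)$ at each $c\in\D(U,U)$ and noting all terms vanish except at $b=a=1_U$, preservation of sums via the definition of summation in $\CC[\D]$, and injectivity by evaluating at $1_U$. The one difference is to your credit: where the paper simply asserts that $\{\eta_{\CC,U}(f_i)\}_{i\in I}$ is summable ``by definition of summability in $\CC[\D]$'', you supply the verification that the zero-padded family $\{\eta_{\CC,U}(f_i)(a)\}_{(i,a)\in I\times\D(U,U)}$ is summable in $\CC$, by applying weak partition-associativity to $\{f_i\}_{i\in I}$ with a partition of $I$ indexed by $I\times\D(U,U)$ having countably many empty cells --- exactly the argument the paper leaves implicit, and legitimate since such empty cells are expressly permitted.
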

\begin{proof}
Given $h\in \CC(X,Y)$ then $\eta_U(h)$ is trivially well-defined as an arrow of $\CC[\D]((X,U),(Y,U))$, since $\sum_{a\in \D(U,U)} h(a)=h$, by Proposition \ref{basics}. 
Now consider $k\in \CC(Y,Z)$. By definition of composition in $\CC[\D]$, 
\[ \left( \eta_{\CC,U} (k) \right) \left( \eta_{\CC,U} (h)\right) (c) = \sum_{c=ba} \left( \eta_{\CC,U} (k)\right)(b)
\left( \eta_{\CC,U} (h)\right)(a) \]
However, 
\[ \left( \eta_{\CC,U} (k)\right)(b)
\left( \eta_{\CC,U} (h)\right)(a)  =  \left\{ \begin{array}{lr} kh &  b=a=1_U \\ 0_{XY} & \mbox{otherwise}\end{array}\right. \]
and therefore
\[ \left( \eta_{\CC,U} (k) \right) \left( \eta_{\CC,U} (h)\right) (c) = 
\left\{ \begin{array}{lr} kh &  c=1_U \\ 0_{XY} & c\neq 1_U \end{array}\right. \]
giving $\eta_{\CC,U} (k) \eta_{\CC,U}(h) = \eta_{\CC,U}(kh)$ as required. 
It is immediate from the definition that $\eta_{\CC,U}(1_X) = 1_{(X,U)}\in \CC[\D]((X,U),(X,U))$, for all $X\in Ob(\CC)$.

For the summation, consider a summable family $\{ f_i\in \CC(X,Y)\}_{i\in I}$. By definition of summability in $\CC[\D]$, the family $\{ \eta_{\CC,U}(f_i) \in \CC[\D] \}_{i\in I}$ is also summable, and 
\[ \sum_{i\in I} \eta_{\CC,U}(f_i)  \ = \ \eta_{\CC,U} \left( \sum_{i\in I} f_i \right) \]

The injectivity of $\eta_{\CC,U} :{\CC}\rightarrow {\CC} [U]$ on objects is immediate. To demonstrate injectivity on arrows, consider 
$f,f'\in \CC(X,Y)$ satisfying $\eta_{\CC,U}(f) = \eta_{\CC,U}(f')$. Then, for all $a\in \D(U,U)$,  
\[ \eta_{\CC,U}(f)(a) = \eta_{\CC,U}(f')(a) \]
Taking $a=1_U$ gives $f=f'$, as required.
\end{proof}
 We therefore have a family of injective PCM-functors from ${\CC}$ to $\CC [\D ]$  indexed by the objects of $\D$.

\begin{proposition}
Let $\D$ be an arbitrary category, and let ${\CC}$ be a PCM-category. Then there exists a family of retractions from $\CC $ to $\CC  [\D ]$, indexed by objects of $\D$. 
\end{proposition}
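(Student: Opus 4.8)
\section*{Proof proposal}

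The plan is to show that the forgetful PCM-functor $\sigma_{\CC,\D} : \CC[\D] \to \CC$ constructed above is a \emph{common} left-inverse for the whole family of embeddings $\{\eta_{\CC,U}\}_{U \in Ob(\D)}$; that is, $\sigma_{\CC,\D} \circ \eta_{\CC,U} = 1_{\CC}$ (the identity functor on $\CC$) for every object $U$ of $\D$. Since $\sigma_{\CC,\D}$ and each $\eta_{\CC,U}$ have already been shown to be PCM-functors, and composites of PCM-functors are again PCM-functors, this single identity is exactly what is needed: for each $U$ the pair $(\eta_{\CC,U},\sigma_{\CC,\D})$ exhibits $\CC$ as a retract of $\CC[\D]$ inside $\CS$, the resulting family being indexed by $Ob(\D)$ with $\sigma_{\CC,\D}$ the member common to all.

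On objects the check is immediate: $\sigma_{\CC,\D}(\eta_{\CC,U}(X)) = \sigma_{\CC,\D}(X,U) = X$ for all $X \in Ob(\CC)$. On arrows, I would fix $h \in \CC(X,Y)$ and unwind the two definitions. By Definition \ref{baseinjection}, $\eta_{\CC,U}(h) \in \CC[\D]((X,U),(Y,U))$ is the function $\D(U,U) \to \CC(X,Y)$ sending $1_U \mapsto h$ and $a \mapsto 0_{XY}$ whenever $a \neq 1_U$, so
\[ \sigma_{\CC,\D}\bigl(\eta_{\CC,U}(h)\bigr) \ = \ \sum_{a \in \D(U,U)} \eta_{\CC,U}(h)(a) . \]
I would then use the partition $\bigl\{\, \{1_U\},\ \D(U,U)\setminus\{1_U\} \,\bigr\}$ of $\D(U,U)$ and apply the Weak Partition-Associativity axiom: the cell $\{1_U\}$ contributes $\eta_{\CC,U}(h)(1_U) = h$ by the Unary Sum axiom, while the cell $\D(U,U)\setminus\{1_U\}$ carries the constantly-$0_{XY}$ family and hence contributes $0_{XY}$ by the ``Sums of Zeros'' clause of Proposition \ref{basics}. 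Therefore $\sigma_{\CC,\D}(\eta_{\CC,U}(h)) = h + 0_{XY} = h$, so $\sigma_{\CC,\D}\circ\eta_{\CC,U}$ is also the identity on arrows.

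The argument is uniform in $U$, so nothing further is required and there is no genuine obstacle. The only points meriting care are bookkeeping ones: that the displayed partition is legitimate even when $\D(U,U)\setminus\{1_U\}$ happens to be empty (permitted by the convention that partition cells may be empty), and that the countably many zero summands indexed by $\D(U,U)\setminus\{1_U\}$ really do sum to $0_{XY}$ — which is precisely Proposition \ref{basics}(3). One could equally well package the entire computation of $\sum_{a\in\D(U,U)}\eta_{\CC,U}(h)(a)$ as a single appeal to Proposition \ref{basics} without explicitly naming the two-cell partition.
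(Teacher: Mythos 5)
Your proposal is correct and follows essentially the same route as the paper: both verify $\sigma_{\CC,\D}\,\eta_{\CC,U} = Id_{\CC}$ on objects trivially and on arrows by computing $\sum_{a\in\D(U,U)}\eta_{\CC,U}(h)(a) = h$ via the sums-of-zeros property of Proposition \ref{basics} (the paper cites that proposition directly, which is exactly the packaging you note at the end; your explicit two-cell partition is just a spelled-out version of the same step, legitimate since the family is already known summable because $\eta_{\CC,U}(h)$ is an arrow of $\CC[\D]$).
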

\begin{proof}
For arbitrary $U\in Ob(\D)$, we demonstrate that $\sigma_{\CC,\D} \eta_{\CC,U}  = Id_{\CC}$: 
\begin{itemize}
\item On objects: 
\[ \sigma_{\CC,\D} \eta_{\CC,U} (X) = \sigma_{\CC,\D}(X,U)=X \]
\item On arrows: given $f\in \CC(X,Y)$, then 
\[ \sigma_{\CC,\D}(\eta_{\CC,U}(f)) = \sum_{a\in \D(U,U)}(\eta_{\CC,U}(f)) \ \ \mbox{ where } \eta_{\CC,U}(f)(a) = \left\{ \begin{array}{lr} f &  a=1_U \\ 0_{XY} & \mbox{otherwise}\end{array}\right. \]
and so by proposition \ref{basics}, $\sigma_{\CC,\D}(\eta_{\CC,U}(f))=f\in \CC(X,Y)$.
\end{itemize}
Thus  $\sigma_{\CC,\D}: \CC [\D ]\rightarrow \CC$ is left-inverse to \underline{all} $\eta_{\CC,U}  : \CC\rightarrow \CC [\D ]$, and so $\CC$ is a retract of $\CC [\D ]$, with retractions indexed by $U\in Ob(\D)$. 
\end{proof}

\section{Embedding the index category into a Cauchy product} 
Similar to the way in which there exists an (object-indexed) family of embeddings of the base category into a Cauchy product, we now exhibit a family of embeddings of the index category into a Cauchy product, indexed by objects of the base category:

\begin{definition}\label{indexinjection}
Let $\D$ be an arbitrary category, and let ${\CC}$ be a PCM-category. For all $X\in Ob({\CC})$ we define the functor $\gamma_{X,\D} : \D\rightarrow \CC [\D ]$ by 
\begin{itemize}
\item {\bf (On objects)} $\gamma_{X,\D}(U)=(X,U)$, for all $U\in Ob(\D)$, 
\item {\bf (On arrows)} Given $h\in \D(U,V)$, then $\gamma_{X,\D}(h)\in\CC [\D ]((X,U),(X,V))$ is defined by 
\[ \gamma_{X,\D}(h)(a) = \left\{ \begin{array}{lr} 1_X  & a=h \\ 0_X & \mbox{otherwise.} \end{array}\right. \]
\end{itemize}
\end{definition}

\begin{proposition}
 $\gamma_{X,\D} : \D\rightarrow \CC [\D ]$,as defined above, is an injective functor for all $X\in Ob(\CC)$.
\end{proposition}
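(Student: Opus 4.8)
The plan is to verify the three defining properties of a functor (preservation of identities, preservation of composition, well-definedness of the image arrows) for $\gamma_{X,\D}$, and then to check injectivity separately on objects and on arrows. The well-definedness check comes first: given $h\in \D(U,V)$, the family $\{\gamma_{X,\D}(h)(a)\}_{a\in\D(U,V)}$ has all but one entry equal to $0_X$, with the single nonzero entry being $1_X$ at $a=h$; by the Sums of Zeros clause of Proposition \ref{basics} this family is summable, so $\gamma_{X,\D}(h)$ genuinely lies in $\CC[\D]((X,U),(X,V))$. Preservation of identities is immediate by comparing the defining formula for $\gamma_{X,\D}(1_U)$ with the formula for $1_{(X,U)}$ from Theorem \ref{cauchyproof}: both send $1_U\mapsto 1_X$ and everything else to $0_X$.

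The substantive step is preservation of composition. Given $h\in\D(U,V)$ and $k\in\D(V,W)$, I would compute $(\gamma_{X,\D}(k)\gamma_{X,\D}(h))(c) = \sum_{c=ba}\gamma_{X,\D}(k)(b)\gamma_{X,\D}(h)(a)$ for each $c\in\D(U,W)$. The product $\gamma_{X,\D}(k)(b)\gamma_{X,\D}(h)(a)$ equals $1_X1_X=1_X$ exactly when $b=k$ and $a=h$, and is $0_X$ otherwise (using that $0_X$ is a zero arrow, Corollary \ref{zeroarrows}). So the sum collapses: it is $1_X$ if $c=kh$ and $0_X$ otherwise, which is precisely $\gamma_{X,\D}(kh)(c)$. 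One must be slightly careful that there is no other pair $(b,a)$ with $ba=c$ contributing a nonzero term — but since a nonzero contribution forces $b=k,a=h$, and then necessarily $c=kh$, uniqueness is automatic. This uses the Sums of Zeros result again to discard all the zero terms. I expect this bookkeeping — confirming the index set of nonzero summands is at most a singleton — to be the only place requiring genuine care, though it is still routine.

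For injectivity: on objects, $\gamma_{X,\D}(U)=(X,U)$ determines $U$ as the second coordinate, so $\gamma_{X,\D}(U)=\gamma_{X,\D}(U')$ forces $U=U'$. On arrows, suppose $h,h'\in\D(U,V)$ with $\gamma_{X,\D}(h)=\gamma_{X,\D}(h')$ as functions $\D(U,V)\to\CC(X,Y)$. Evaluating at $a=h$ gives $1_X = \gamma_{X,\D}(h)(h) = \gamma_{X,\D}(h')(h)$, which by the defining formula equals $1_X$ only if $h=h'$, and equals $0_X$ otherwise; so provided $1_X\neq 0_X$ in $\CC(X,X)$ we conclude $h=h'$. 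I note a subtlety here: if $\CC(X,X)$ is the trivial one-element PCM then $1_X=0_X$ and the argument fails, so strictly one should either assume this degenerate case away or observe that injectivity on arrows is claimed object-wise and holds whenever $X$ is not such a degenerate object. Since the paper's earlier embedding proofs (e.g. for $\eta_{\CC,U}$) use exactly the same evaluation-at-a-point trick without flagging this, I would follow suit and simply evaluate at $a=h$, concluding $h=h'$ directly. This last point — the implicit non-degeneracy — is the one place where the argument is not entirely airtight as a matter of logic, but it is consistent with the paper's conventions.
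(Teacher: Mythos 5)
Your proof is correct and takes essentially the same route as the paper's: well-definedness of $\gamma_{X,\D}(h)$ via Proposition \ref{basics}, the same collapse of $\sum_{c=ba}\gamma_{X,\D}(k)(b)\,\gamma_{X,\D}(h)(a)$ to the single possibly-nonzero term at $(b,a)=(k,h)$, identity preservation by comparison with the formula for $1_{(X,U)}$, and injectivity by pointwise comparison of the defining functions. Your remark about the degenerate case $1_X=0_X$ is a genuine subtlety that the paper's one-line injectivity argument (``these are identical exactly when $h=h'$'') also passes over silently, so flagging it only sharpens the argument.
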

\begin{proof}
By Proposition \ref{basics}, it is immediate that, for all $h\in \D(U,V)$, the family $\{ \gamma_{X,\D}(h)(a) \in \CC(X,X) \}_{a\in \D(U,V)}$ is summable, and hence $\gamma_{X,\D}(h)$  is an arrow of $\CC[\D]((X,U),(X,V))$. To demonstrate functoriality, consider $k\in \D(U,V)$. Then 
\[ \left( \gamma_{X,\D}(k) \gamma_{X,\D}(h) \right) (c) = \sum_{c=ba} \gamma_{X,\D}(k)(b) \gamma_{X,\D}(h)(a) \]
However, 
\[ \gamma_X(k)(b) \left\{ \begin{array}{lr} 1_X & b=k \\ 0_{XX} & \mbox{otherwise} \end{array}\right. \] 
and  
\[ \gamma_X(h)(a) \left\{ \begin{array}{lr} 1_X & a=h \\ 0_{XX} & \mbox{otherwise} \end{array}\right.
\]
Therefore, 
\[ \left( \gamma_X(k) \gamma_X(h) \right) (c) = \left\{ \begin{array}{lr} 1_X & c=kh \\ 0_{XX} & \mbox{otherwise} \end{array} \right. \]
and thus $\gamma_{X,\D}(k)\gamma(X,\D)(h) = \gamma_{X,\D}(kh)$. The proof that $\gamma_{X,\D}$ also preserves identities is trivial.

To demonstrate injectivity, consider $h,h'\in \D(U,V)$. Then 
\[ \gamma_X(h)(a) \left\{ \begin{array}{lr} 1_X & a=h \\ 0_{XX} & \mbox{otherwise} \end{array}\right. \]
 and  
\[ \gamma_X(h')(a) \left\{ \begin{array}{lr} 1_X & a=h' \\ 0_{XX} & \mbox{otherwise} \end{array}\right.
\]
These are identical exactly when $h=h'$.
\end{proof}
We therefore have a family of injective functors from $\D$ to $\CC [\D ]$ indexed by objects of ${\CC}$.

\section{Embedding the product of base and index into the Cauchy product}
As well as the above embeddings of the base and index categories into the Cauchy product, there exist a straightforward  embedding of the product of the base and index categories into the Cauchy product.

\begin{definition}\label{stardef}
Given a PCM-Category $\CC$, and a locally small category $\D$, we define the functor \footnote{Note that this is simply a functor, rather than a PCM-functor, since the product category $\CC\times \D$ is not a PCM-category. However, the construction relies on the assumption that $\CC$ is indeed a PCM-category.}
\[ (\underline{\ \ } \star \underline{\ \ }): \CC\times \D \rightarrow \CC[\D] \]  
as follows:
\begin{itemize}
\item {\bf (Objects)} Given $X\in Ob(\CC)$ and $U\in Ob(\D)$, then $X\star U = (X,U)\in Ob(\CC[\D])$
\item {\bf (Arrows)} Given $f\in \CC(X,Y)$ and $g\in \D(U,V)$, then $f\star g \in \CC[\D]((X,U),(Y,V))$ is the function 
\[ (f\star g)(h) \ = \ \left\{\begin{array}{lr} f & g=h \\  0_{X,Y} & \mbox{otherwise.} \end{array}\right. \]
It is immediate that the family $\left\{ (f\star g)(h) \right\}_{h\in \D(U,V)} $ is summable, and hence this is indeed an arrow of $\CC[\D]$. 
\end{itemize}
\end{definition}

\begin{lemma} The operation $(\underline{\ \ } \star \underline{\ \ }): \CC\times \D \rightarrow \CC[\D]$ is indeed a functor.\end{lemma}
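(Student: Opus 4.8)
The plan is to verify the two functor axioms for $(\underline{\ \ }\star\underline{\ \}):\CC\times\D\to\CC[\D]$: preservation of identities and preservation of composition. Preservation of identities is quick: given $(X,U)\in Ob(\CC\times\D)$, the identity there is $(1_X,1_U)$, and by Definition \ref{stardef} the arrow $1_X\star 1_U:\D(U,U)\to\CC(X,X)$ sends $1_U\mapsto 1_X$ and everything else to $0_{XX}$. Comparing with the formula for identities in $\CC[\D]$ from Theorem \ref{cauchyproof}, namely $1_{(X,U)}(r)=1_X$ if $r=1_U$ and $0_X$ otherwise, we see $1_X\star 1_U = 1_{(X,U)}$, as required.

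For composition, I would take arrows $(f,g)\in(\CC\times\D)((X,U),(Y,V))$ and $(f',g')\in(\CC\times\D)((Y,V),(Z,W))$, so that their composite in $\CC\times\D$ is $(f'f,g'g)$. On one side, $(f'f)\star(g'g):\D(U,W)\to\CC(X,Z)$ sends $g'g\mapsto f'f$ and all other arrows to $0_{XZ}$. On the other side, I compute $((f'\star g')(f\star g))(c)=\sum_{c=ba}(f'\star g')(b)(f\star g)(a)$ for each $c\in\D(U,W)$, using the composition formula from Definition \ref{catcauchy}. The key observation is that $(f'\star g')(b)$ is $f'$ when $b=g'$ and $0$ otherwise, while $(f\star g)(a)$ is $f$ when $a=g$ and $0$ otherwise; since composition in a PCM-category has zero arrows as absorbing elements (Corollary \ref{zeroarrows}), the only term in the sum that can be nonzero is the one with $b=g'$ and $a=g$, which contributes $f'f$ and occurs precisely when $c=g'g$. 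Hence $((f'\star g')(f\star g))(c)=f'f$ if $c=g'g$ and $0_{XZ}$ otherwise, matching $((f'f)\star(g'g))(c)$.

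The only mild subtlety — and the closest thing to an obstacle — is bookkeeping in the composition sum: one must note that the index set $\{(b,a):c=ba\}$ may contain several pairs, but every pair other than $(g',g)$ yields a summand equal to a zero arrow (by absorption of zeros under composition), so by the Sums of Zeros part of Proposition \ref{basics} the whole sum collapses to the single nonzero contribution (or to $0_{XZ}$ when $(g',g)$ is not among the decompositions of $c$, i.e. when $c\neq g'g$). This uses that $\{(f'\star g')(b)(f\star g)(a)\}$ is genuinely summable, which follows since it is obtained from a summable family by the strong distributivity property applied to the (trivially summable, being essentially singletons padded with zeros) families $\{(f\star g)(a)\}_{a\in\D(U,V)}$ and $\{(f'\star g')(b)\}_{b\in\D(V,W)}$. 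With these observations the two displayed formulas agree for every $c$, so $(f'\star g')(f\star g)=(f'f)\star(g'g)$, completing the proof that $\star$ is a functor.
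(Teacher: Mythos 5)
Your proof is correct and follows essentially the same route as the paper: identities are matched against the formula for $1_{(X,U)}$ from Theorem \ref{cauchyproof}, and compositionality is checked by evaluating $\left((f'\star g')(f\star g)\right)(c)=\sum_{c=ba}(f'\star g')(b)(f\star g)(a)$ and observing that only the pair $(g',g)$ can contribute a nonzero term. The extra bookkeeping you supply (zero absorption via Corollary \ref{zeroarrows}, collapse of the sum via Proposition \ref{basics}, and summability of the product family) is detail the paper leaves implicit, but it is the same argument.
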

\begin{proof}$ $ \\
{\bf Compositionality} Consider arrows 
\[ f\in \CC(X,Y) \ , \ f'\in \CC (Y,Z) \ \ , \ \ g\in \D(U,V) \ , \ g'\in \D(V,W) \]
By definition, $f'f \star g'g \in \CC[\D]((X,U),(Z,W))$ is given by the function 
\[ (f'f\star g'g)(k) \ = \ \left\{\begin{array}{lr} f'f & k=g'g \\  0_{X,Z} & \mbox{otherwise.} \end{array}\right. \]
Similarly, the arrows $f\star g\in \CC[\D]((X,U),(Y,V))$ and $ f'\star g' \in \CC[\D]((Y,V),(Z,W))$ are, by definition, the functions 
\[ (f\star g)(h) \ = \ \left\{\begin{array}{lr} f & h=g \\  0_{X,Y} & \mbox{otherwise.} \end{array}\right. \]
and 
\[ (f'\star g')(j) \ = \ \left\{\begin{array}{lr} f' & j=g' \\  0_{Y,Z} & \mbox{otherwise.} \end{array}\right. \]
Then, by definition of composition in $\CC[\D]$
\[ \left( (f'\star g')(f\star g)\right) (k) \ = \ \sum_{k=jh} (f'\star g')(j)(f\star g)(h) \]
From the definition of $ f'\star g'$ and $f\star g$, we see that 
\[ \sum_{k=jh} (f'\star g')(j)(f\star g)(h) \ = \ \left\{\begin{array}{cc} f'f & k=g'g \\ 0_{X,Z} & \mbox{otherwise.} \end{array}\right. \]
Hence, $\left( (f'\star g')(f\star g)\right) (k) = (f'f\star g'g)(k)$, as required.
\\

\noindent{\bf Identities} By definition, $1_X \star 1_U\in \CC[\D]((X,U),(X,U))$ is the function 
\[ (1_X\star 1_U)(h) \ = \ \left\{\begin{array}{cc} 1_X & h=1_U \\ 0_{XX} & \mbox{otherwise.} \end{array} \right. \]
which, from Theorem \ref{cauchyproof}, is precisely $1_{(X,U)}\in \CC[\D]((X,U),(X,U))$.
\end{proof}

\begin{corol}
The functor $(\underline{ \ \ } \star\underline{\ \ }) : \CC \times \D \rightarrow \CC[\D]$ is an embedding of $\CC\times \D$ into $\CC[\D]$.
\end{corol}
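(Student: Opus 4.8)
The preceding lemma already supplies the only computational content, namely that $(\underline{\ \ }\star\underline{\ \ })$ is a functor, so the plan is simply to verify the two extra properties packaged into the word ``embedding'': injectivity on objects and faithfulness (injectivity on each hom-set). Both should fall straight out of Definition \ref{stardef}.

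Injectivity on objects is immediate, since by definition the object assignment sends $(X,U)$ to $X\star U=(X,U)$, i.e.\ it is literally the identity map $Ob(\CC)\times Ob(\D)\to Ob(\CC[\D])$. For faithfulness, note first that equal arrows of $\CC[\D]$ must have equal sources and targets, so it is enough to fix $X,Y\in Ob(\CC)$ and $U,V\in Ob(\D)$ and show that $(f,g)\mapsto f\star g$ is injective from $\CC(X,Y)\times\D(U,V)$ into $\CC[\D]((X,U),(Y,V))$. The mechanism is that the function $f\star g:\D(U,V)\to\CC(X,Y)$ visibly remembers both $f$ and $g$: its value at $h=g$ is $f$, and its value at every $h\neq g$ is $0_{XY}$. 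Concretely, if $f\star g=f'\star g'$, then evaluating both sides at $g$ gives $f=(f'\star g')(g)$ and evaluating at $g'$ gives $f'=(f\star g)(g')$; comparing these pins down $g=g'$ and hence $f=f'$, so $(f,g)=(f',g')$ as an arrow of $\CC\times\D$. (One can also recover $f$ cleanly as $\sigma_{\CC,\D}(f\star g)$ via the forgetful functor, since all but one summand of the defining sum vanishes.)

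The step to watch — and the one I expect to be the real obstacle — is the behaviour at zero arrows: if $f=0_{XY}$ then $f\star g$ is the constantly-$0_{XY}$ function no matter which $g$ one chose, so the ``peak at $g$'' no longer singles out $g$. This is the analogue of the degeneracy already tacitly present in the injectivity arguments for $\eta_{\CC,U}$ and $\gamma_{X,\D}$ (where $1_X=0_{XX}$ would collapse those functors), and I would dispose of it in the same spirit, recording that $(\underline{\ \ }\star\underline{\ \ })$ exhibits $\CC\times\D$ as the subcategory of $\CC[\D]$ on the arrows of the form $f\star g$. Combined with injectivity on objects and faithfulness away from that degeneracy, this yields the claimed embedding $\CC\times\D\hookrightarrow\CC[\D]$; as a consistency check, it restricts along $\eta_{\CC,U}$ and $\gamma_{X,\D}$ to the embeddings of the two factors established in the previous sections, since $f\star 1_U=\eta_{\CC,U}(f)$ and $1_X\star g=\gamma_{X,\D}(g)$.
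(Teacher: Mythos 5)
Your verification follows the same route as the paper's own proof: bijectivity on objects is immediate since $X\star U=(X,U)$, and injectivity on arrows is checked by comparing the functions $f\star g:\D(U,V)\rightarrow\CC(X,Y)$ of Definition \ref{stardef} pointwise. Where you go beyond the paper is in flagging the zero-arrow degeneracy, and you are right that it is genuine rather than cosmetic: every hom-set $\CC(X,Y)$ contains $0_{XY}$, and $0_{XY}\star g$ is the constantly-$0_{XY}$ function whatever $g$ is, so whenever $\D(U,V)$ contains two distinct arrows $g\neq g'$ the functor identifies the distinct arrows $(0_{XY},g)$ and $(0_{XY},g')$ of $\CC\times\D$. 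The paper's proof silently overlooks exactly this point --- its assertion that $f\star g = h\star k$ holds ``exactly when $f=h$ and $g=k$'' fails in that case --- so injectivity on arrows only holds away from zero arrows, or under a nondegeneracy assumption on the hom-sets involved.

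Two caveats about your own write-up. First, your initial recovery argument (``evaluating at $g$ and at $g'$ \ldots pins down $g=g'$'') is only valid when $f\neq 0_{XY}$: if $g\neq g'$, the two evaluations yield merely $f=f'=0_{XY}$, not $g=g'$. You acknowledge this immediately afterwards, but the patch you then offer --- describing the image as the subcategory on arrows of the form $f\star g$ --- records the failure rather than repairing it, so it does not deliver the corollary as stated; an honest fix is to state the injectivity claim only for non-zero $f$ (equivalently, to add a nondegeneracy hypothesis). Second, the analogy with the earlier embeddings is only half right: $\gamma_{X,\D}$ does tacitly require $1_X\neq 0_{XX}$, but $\eta_{\CC,U}$ is injective unconditionally, since $f$ is recovered by evaluating $\eta_{\CC,U}(f)$ at $1_U$. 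Your closing consistency check, that $f\star 1_U=\eta_{\CC,U}(f)$ and $1_X\star g=\gamma_{X,\D}(g)$, is correct and is a useful observation the paper does not make explicit.
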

\begin{proof}
By definition $X\star U \stackrel{def}{=} (X,U)$, and hence $(\underline{ \ \ } \star\underline{\ \ })$ is bijective on objects. 
To see injectivity on arrows, recall that $(f\star g),(h\star k)\in \CC[\D]((X,U),(Y,V))$ are functions from $\D(U,V)$ to $\CC(X,Y)$ 
given in Definition \ref{stardef}. From this definition, these are identical exactly when $f=h$ and $g=k$, and hence are equal in $\CC\times \D$  -- i.e.
\[ f\star g \ =\  h\star k \ \ \Leftrightarrow \ \  f\times g \ = \ h \times k \]
\end{proof}

\section{Universal properties, and the Cauchy product}
The functors $\eta_{\CC,U} :\CC\rightarrow \CC[\D]$ and $\gamma_{X,\D}:\D\rightarrow \CC[\D]$ are clearly (object-indexed)  categorical analogues of the usual {\em augmentation} and {\em inclusion} maps used in the demonstration of the universal property of the monoid semiring construction (see \cite{MS} for a good exposition, albeit in the special case of monoid rings). It is natural to wonder whether an analogous property holds for the categorical Cauchy product. In the following sections, we give an exposition of the usual universal property of monoid semirings, and demonstrate that a straightforward generalisation to the Cauchy product is not possible, except in the trivial one-object case. The computational significance of this is discussed, and we  consider the additional structure that would be required in order to have a suitable universal property in the full multi-object case.

\subsection{The universal property of monoid semirings}
The universal property of monoid semirings is a canonical example of a universal property (see, for example, \cite{MS}). 

\begin{theorem}\label{semiuniversal}
Let $(M,\cdot)$ be a monoid, and $(A,\times,+,1_A,0_A)$ and $(B,\times_B,1_B,0_B)$ be unital semirings. Further, let $f:A\rightarrow B$ be a unital semiring homomorphism, and let $g:(M,\cdot ) \rightarrow (B,\times_B)$ be a monoid homomorphism. Finally, let 
$\eta:A\rightarrow A[M]$ and $\gamma :M\rightarrow A[M]$ be the usual augmentation and inclusion maps. Then  there exists a unique unital semiring homomorphism $h:A[M]\rightarrow B$ such that the diagram of Figure \ref{semiringuniversal} commutes.

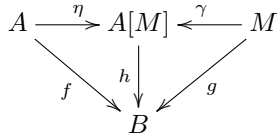
\begin{figure}[b]
\caption{The universal property for monoid semirings}
\label{semiringuniversal}
\[ \xymatrix{
A \ar[r]^{\eta}\ar[dr]_f & A[M] \ar[d]_{h} & M \ar[l]_{\gamma} \ar[dl]^g \\
& B & \\
}
\]
\end{figure}

\end{theorem}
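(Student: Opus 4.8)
The plan is to construct the required homomorphism $h : A[M] \to B$ explicitly by the formula dictated by the commutativity requirements, verify it is a unital semiring homomorphism, and then argue uniqueness by showing that any homomorphism making the diagram commute must agree with this formula on a spanning set of $A[M]$.

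First I would observe that every element $\eta \in A[M]$ (a function $\eta : M \to A$ with finite support) can be written as a finite sum $\eta = \sum_{m \in \operatorname{supp}(\eta)} \eta(m)\cdot(\text{basis element at }m)$, where the basis element at $m$ is the function sending $m$ to $1_A$ and everything else to $0_A$. In terms of the augmentation and inclusion, this basis element is $\gamma(m)$, and the element $a\cdot(\text{basis element at }m)$ is the product $\eta(a)\cdot \gamma(m) = \eta(a)\gamma(m)$ in $A[M]$, where here I write $\eta$ for the augmentation map and abuse notation slightly; more precisely every element of $A[M]$ is a finite sum $\sum_i \eta_{\mathrm{aug}}(a_i)\,\gamma(m_i)$. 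This decomposition forces $h$: commutativity of Figure \ref{semiringuniversal} gives $h\eta_{\mathrm{aug}} = f$ and $h\gamma = g$, so necessarily
\[
h\!\left(\sum_i \eta_{\mathrm{aug}}(a_i)\gamma(m_i)\right) = \sum_i f(a_i)\times_B g(m_i).
\]
Unravelling, for a general $\mu \in A[M]$ this reads $h(\mu) = \sum_{m \in M} f(\mu(m)) \times_B g(m)$, a finite sum since $\mu$ has finite support. I would take this as the \emph{definition} of $h$ and check it is well-defined (independent of the chosen decomposition — automatic here since the formula is given directly in terms of $\mu$).

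Next I would verify that $h$ is a unital semiring homomorphism. Additivity is immediate: $h(\mu + \nu) = \sum_m f(\mu(m)+\nu(m))\times_B g(m) = \sum_m (f(\mu(m)) + f(\nu(m)))\times_B g(m) = h(\mu) + h(\nu)$, using additivity of $f$ and distributivity in $B$. Preservation of the additive and multiplicative units is a direct check against the formulas in Definition \ref{msr}: $h(0) = \sum_m f(0_A)\times_B g(m) = 0_B$, and $h(1) = f(1_A)\times_B g(1_M) = 1_B \times_B 1_B = 1_B$ since $1$ is supported only at $1_M$ and $f,g$ are unital. For multiplicativity I would compute, using the convolution formula $(\eta\times\mu)(m) = \sum_{m=qp}\eta(q)\mu(p)$,
\[
h(\eta\times\mu) = \sum_{m}f\!\left(\sum_{m=qp}\eta(q)\mu(p)\right)\times_B g(m) = \sum_{q,p} f(\eta(q))\times_B f(\mu(p))\times_B g(qp),
\]
using additivity and multiplicativity of $f$; then since $g$ is a monoid homomorphism $g(qp) = g(q)\times_B g(p)$, and rearranging (all sums finite, $B$ a semiring so multiplication distributes over these finite sums) this equals $\left(\sum_q f(\eta(q))\times_B g(q)\right)\times_B\left(\sum_p f(\mu(p))\times_B g(p)\right) = h(\eta)\times_B h(\mu)$.

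Finally, commutativity of the diagram: $h(\eta_{\mathrm{aug}}(a)) = \sum_m f(\eta_{\mathrm{aug}}(a)(m))\times_B g(m) = f(a)\times_B g(1_M) = f(a)\times_B 1_B = f(a)$, and $h(\gamma(m)) = f(1_A)\times_B g(m) = g(m)$. For uniqueness, suppose $h'$ also makes the diagram commute; since $h'$ is a semiring homomorphism and every $\mu \in A[M]$ is the finite sum $\sum_m \eta_{\mathrm{aug}}(\mu(m))\times \gamma(m)$, we get $h'(\mu) = \sum_m h'(\eta_{\mathrm{aug}}(\mu(m)))\times_B h'(\gamma(m)) = \sum_m f(\mu(m))\times_B g(m) = h(\mu)$. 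I expect the only mildly delicate point to be bookkeeping the finite-support condition and the reindexing of the double sum in the multiplicativity check; there is no real obstacle, as everything reduces to finite sums where $B$'s semiring axioms apply directly — the categorical subtleties of the rest of the paper play no role in this classical statement.
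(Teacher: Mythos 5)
Your construction is the same as the paper's: the paper simply writes down $h(\mu)=\sum_{m\in M}f(\mu(m))\times_B g(m)$ and defers the verification to the literature, whereas you carry the verification out, and your uniqueness argument via the decomposition $\mu=\sum_{m}\eta_{\mathrm{aug}}(\mu(m))\times\gamma(m)$ is the standard one. So in approach you and the paper coincide.

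There is, however, one step whose justification fails as written. In the multiplicativity check you pass from $h(\eta)\times_B h(\mu)=\bigl(\sum_q f(\eta(q))\times_B g(q)\bigr)\times_B\bigl(\sum_p f(\mu(p))\times_B g(p)\bigr)$ to $\sum_{q,p}f(\eta(q))\times_B f(\mu(p))\times_B g(q)\times_B g(p)$ ``by rearranging'', citing only finiteness of the sums and distributivity in $B$. Distributivity lets you expand the product of two finite sums into a double sum of terms $f(\eta(q))\times_B g(q)\times_B f(\mu(p))\times_B g(p)$, but it does not let you move $g(q)$ past $f(\mu(p))$: $B$ is not assumed commutative, so this interchange needs the extra hypothesis that the images of $f$ and $g$ commute elementwise, i.e.\ $f(a)\times_B g(m)=g(m)\times_B f(a)$ for all $a\in A$, $m\in M$ (or that $B$ is commutative). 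This hypothesis is genuinely needed and not cosmetic: a direct computation from Definition \ref{msr} shows that $\eta_{\mathrm{aug}}(a)$ and $\gamma(m)$ commute in $A[M]$, so any semiring homomorphism $h$ making Figure \ref{semiringuniversal} commute forces $f(a)\times_B g(m)=g(m)\times_B f(a)$; taking, say, $A=B$ a matrix semiring, $f$ the identity and $g$ with non-central image shows that without the commutation assumption no such $h$ exists at all. The omission originates in the statement itself (the textbook versions, e.g.\ \cite{MS}, include the commutation or commutativity hypothesis, and the paper's one-line proof inherits it silently), so your proof is exactly as complete as the paper's; but to make your argument correct you should add that hypothesis explicitly and invoke it at the rearrangement step, rather than attributing the interchange to distributivity alone.
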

\begin{proof}
The semiring homomorphism $h:A[M]\rightarrow B$ is defined as follows:

Given $\alpha: M\rightarrow A$, an element of $A[M]$, then 
\[ h(\alpha) \ = \ \sum_{m\in M} f(\alpha(m)) g(m) \]
The proof that this is a unique unital semiring homomorphism that makes the above diagram commute is then straightforward, and may be found in many algebra texts (e.g. \cite{MS}).
\end{proof}

\begin{remark}{Interpretation}\label{universalinterpretation}
The usual interpretation of the above universal property is that the unique unital semiring homomorphism making the diagram of Figure \ref{semiringuniversal} commute describes the computational process of {\em instantiating a free variable}. In order to provide motivation for this interpretation, we describe a simple example.

Consider the monoid ${\Bbb Z}_p=\{ 0,1,\ldots ,p-1\}$ for some prime $p$, along with the semiring of natural numbers $\Bbb N$. The members of the monoid semiring $\Bbb N[\Bbb Z_p]$  are often written as `polynomials' in some formal variable $z$, so the function  $f:\Bbb Z_p\rightarrow \Bbb N$ would be written as 
\[ f(0)z^0 + f(1)z^1+f(2)z^2+\ldots + f(p-1)z^{p-1} \]
with the understanding that multiplication of this formal variable is defined by $z^az^b=z^{a+b\ (mod\ p)}$.
Using this polynomial formalism, the augmentation $\eta : \Bbb N\rightarrow \Bbb N[\Bbb Z_p]$  and inclusion $\gamma : \Bbb Z_p\rightarrow \Bbb N[\Bbb Z_p] $  are given by 
\[ \eta (n)= nz^0 \ \ , \ \ \gamma(r) = z^r \]

Let us now consider the usual semiring homomorphism $\iota : \Bbb N\rightarrow \Bbb C$ given by the canonical inclusion 
together with the monoid homomorphism $\chi_s: \Bbb Z_p \rightarrow \Bbb C$ defined by $\chi_s(a)=e^{2\pi i \frac{as}{p}}$ for some fixed $s\neq 0  \in \Bbb N$.

The universal property of the monoid semiring construction tells us that there is a unique induced universal map $sub: \Bbb N[\Bbb Z_p] \rightarrow \Bbb C$
that makes the diagram of Figure \ref{NZPuniversal} commute.
\begin{figure}[t]
\caption{A canonical example of the universal arrow}
\label{NZPuniversal}
\[ \xymatrix{
\Bbb N \ar[r]^{\eta}\ar[dr]_\iota & \Bbb N[\Bbb Z_p] \ar[d]|{sub} & \Bbb Z_p \ar[l]_{\gamma} \ar[dl]^{\chi_s} \\
& \Bbb C & \\
}
\]
\end{figure}
From the prescription given in the proof of Theorem \ref{semiuniversal}, it is immediate that the action of the universal arrow is given by 
\[ \xymatrix{
f(0)z^0\ +\ f(1)z^1\ +\ f(2)z^2\ +\ \ldots \ + \ f(p-1)z^{p-1} \ar@{|->}[d]|{sub} \\
f(0)\ +\ f(1)e^{2\pi i \frac{s}{p}}\ +\ f(2)e^{2\pi i \frac{2s}{p}}\ +\ \ldots \ + \ f(p-1)e^{2\pi i \frac{(p-1)s}{p}}
}
\]
(Note that we elide the inclusion homomorphism $\iota : \Bbb N\rightarrow \Bbb C$, for clarity).
Thus, the induced universal map simply interprets as substituting a concrete value for the formal variable $z$.
\end{remark}

An immediate question is whether such a property also exists for the categorical Cauchy product? That is, given a PCM-functor $\Gamma \in \CS(\CC , \E)$ together with $\D\in Ob(\CAT)$ and a functor $\Delta \in \cat(\D,\E)$, does there exist an object-indexed family of functors $\Upsilon_{X,U}  \in \CS(\CC[\D] , \E)$ making the diagram of Figure \ref{epicfail} commute, for arbitrary choice of $X\in Ob(\CC)$ and $U\in Ob(\D)$?

\begin{figure}[b]
\caption{Can such a universal family of functors exist?}
\label{epicfail}
\[ \xymatrix{
\CC \ar[r]^{\eta_{\CC,U}}\ar[dr]_\Gamma & \CC [\D] \ar[d]|{\Upsilon_{X,U}} & \D \ar[l]_{\gamma_{X,\D}} \ar[dl]^\Delta \\
& \E & \\
}
\]
\end{figure}

However, it is straightforward that, simply because the categorical version is the multiple-object setting, such a universal property can only ever hold in a very restricted setting, as the following result demonstrates:

\begin{proposition}\label{notypeduniversals}
Let us assume the existence of an object-indexed family of functors $\Upsilon_{X,U} : \CC[\D]\rightarrow \E$ making the diagram of Figure \ref{epicfail} commute, for all $X\in Ob(\CC)$ and $U\in Ob(\D)$. Then $\Gamma (X)=\Delta(U)$, for all $X\in Ob(\CC)$ and $U\in Ob(\D)$. 
\end{proposition}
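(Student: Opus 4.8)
The plan is to chase objects around the triangle of Figure \ref{epicfail}; no arrow-level computation is needed. First I would fix an arbitrary $X\in Ob(\CC)$ and an arbitrary $U\in Ob(\D)$, and recall from Definitions \ref{baseinjection} and \ref{indexinjection} that on objects the two embeddings act by $\eta_{\CC,U}(X)=(X,U)$ and $\gamma_{X,\D}(U)=(X,U)$. Thus both $\eta_{\CC,U}$ and $\gamma_{X,\D}$ send their chosen source objects to one and the same object $(X,U)$ of $\CC[\D]$.

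Next I would use the assumed commutativity of the diagram, which says precisely that $\Upsilon_{X,U}\circ\eta_{\CC,U}=\Gamma$ as functors $\CC\to\E$ and $\Upsilon_{X,U}\circ\gamma_{X,\D}=\Delta$ as functors $\D\to\E$. Evaluating the first identity on the object $X$ gives $\Gamma(X)=\Upsilon_{X,U}(\eta_{\CC,U}(X))=\Upsilon_{X,U}(X,U)$, and evaluating the second on the object $U$ gives $\Delta(U)=\Upsilon_{X,U}(\gamma_{X,\D}(U))=\Upsilon_{X,U}(X,U)$. Comparing the two right-hand sides yields $\Gamma(X)=\Delta(U)$, and since $X$ and $U$ were arbitrary this holds for all $X\in Ob(\CC)$ and all $U\in Ob(\D)$, as required.

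There is essentially no obstacle: the only thing to notice is that the conclusion is forced entirely at the level of objects, because for the fixed pair $(X,U)$ the images of $\eta_{\CC,U}$ and $\gamma_{X,\D}$ collide at the single object $(X,U)$, on which $\Upsilon_{X,U}$ can take only one value. Morally this is also the reason the full universal property cannot hold away from the one-object case: the object-parts (the "types") of $\Gamma$ and $\Delta$ cannot be reconciled unless they are both constant on objects with a common value. No use of the PCM structure, the summation axioms, the composition formula of Definition \ref{catcauchy}, or the behaviour of $\Upsilon_{X,U}$ on arrows is needed.
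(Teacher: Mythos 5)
Your proposal is correct and follows essentially the same argument as the paper: an object-level chase using $\eta_{\CC,U}(X)=(X,U)=\gamma_{X,\D}(U)$ and the two commutativity identities to force $\Gamma(X)=\Upsilon_{X,U}(X,U)=\Delta(U)$. The paper merely states the intermediate identities for general objects $P\in Ob(\CC)$ and $Q\in Ob(\D)$ before specialising to $P=X$, $Q=U$, which is an inessential difference.
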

\begin{proof}
Let us fix some arbitrary $X\in Ob(\CC)$ and $Y\in Ob(\D)$, so that the following diagram commutes:
\[ \xymatrix{
\CC \ar[r]^{\eta_{\CC,U}}\ar[dr]_\Gamma & \CC [\D] \ar[d]|{\Upsilon_{X,U}} & \D \ar[l]_{\gamma_{X,\D}} \ar[dl]^\Delta \\
& \E & \\
}
\]
Then for all $P\in Ob(\CC)$, 
\[ \eta_{\CC,U}(P)\ =\   (P,U)\in Ob(\CC[\D]) \]
by definition of $\eta_{\CC,U}$, and by commutativity of the above diagram 
$\Upsilon_{X,U}\eta_{\CC,U} = \Gamma$. Therefore, $\Upsilon_{X,U} (P,U)=\Gamma(P)$. 
Similarly $ \gamma _{X,\D} (Q) = (X,Q)$, for all $Q\in Ob(\D)$, and therefore $ \Upsilon_{X,U} (X,Q) = \Delta(Q) $,  since $\Upsilon_{X,U}\gamma_{X,\D}=\Delta$. 
Combining these, we see that 
\[ \Upsilon_{X,U}(X,U) \ = \ \Gamma(X)\ =\ \Delta(U)  \]
Finally, $X$ and $U$ were chosen arbitrarily, and so $\Gamma(X)\ =\ \Delta(U) $, for all $X\in Ob(\CC)$ and $U\in Ob(\D)$.
\end{proof}

\begin{remark}{The multi-object case, and universal arrows}
A simple corollary of the above proof is that, when such a universal family of functors exists,  $\Gamma: \CC\rightarrow \E$ and $\Delta: \D\rightarrow \E$ map all objects of $\CC$ and $\D$ to the same object of $\E$ (It is then straightforward, although deeply uninteresting, to demonstrate that in the restricted case where the target in the diagram of Figure \ref{epicfail} is a one-object PCM category, we do indeed have a universal property that is a direct generalisation of that of Theorem \ref{semiuniversal}). Note also that the above proof is entirely based on how such a universal family of  functors might act on the {\em objects} of these  categories. Therefore, it does not depend on any subtlety about the precise notion of summation used, or even the definition of composition; rather, it fails on the simple fact that $\E$ has more than one object.

From an algebraic point of view, this appears to be a serious drawback --- indeed, it is not uncommon to {\em define} the monoid semiring construction simply in terms of the existence of a suitable universal arrow. However, from a more computational point of view, it appears reasonable: from the interpretation given in Remark \ref{universalinterpretation} we should think of the construction of a universal arrow as substituting values for variables. When we consider the multi-object setting, the multiplicity of objects gives distinct {\em types} for both variables and values. The interpretation of Proposition \ref{notypeduniversals} is then that, when we try to substitute values for variables, we must do so in a context where all types agree. From a computational, rather than an algebraic, point of view this is to be expected!

However, this does not mean that no suitable universal property may exist. The key question is simply which object of $\E$ is  the appropriate target of $\Upsilon_{X,U}$?  Let us now assume that the PCM-category $\E$ also has a monoidal tensor $(\underline{\ }\otimes \underline{\ })$ that is required to satisfy some universal property for bilinearity, analogous to either that of the usual tensor product of Hilbert spaces, or the constructions of \cite{AB} for Partially Additive Monoids. In this case, the natural candidate must be the object $\Gamma (X) \otimes \Delta(U) \in Ob(\E)$. It therefore seems that questions of universal properties must await a theory of {\em monoidal} PCM-categories. As we demonstrate in Section \ref{neverending}, such a theory would be key to many reasonable generalisations and applications, both algebraic and computational.

\end{remark}

\section{Conclusions} 
We have demonstrated that the monoid-semiring construction can be placed within a significantly more general categorical and multi-object setting. In order for this more general theory to be equally applicable in both `analytic' and `algebraic' settings, this was done using  an axiomatisation of summation that unifies notions from analysis with notions from algebraic program semantics. 

\section{Future directions}\label{neverending}
As well as the program outlined above, work continues in several related directions:

\begin{itemize}
\item {\bf Categorical enrichment}
A natural question about this paper is whether the notion of `PCM-category' is in fact an example of categorical enrichment (as in \cite{GMK}) over the category {\bf PCM}? Enrichment requires either a monoidal, or a closed, (or monoidal closed) structure. It has recently been demonstrated by the author and P. Scott (Ottawa) that {\bf PCM} is a closed category in the sense of \cite{LA}, and a monoidal tensor adjoint to the closed structure has been given explicitly by T. Porter (Wales).  This monoidal tensor appears to exhibit a universal property for a suitable notion of bilinear maps of PCMs (similar to the constructions of \cite{AB} for Partially Additive Monoids). It is expected that a category enriched over this monoidal closed category is exactly a `PCM-category', as defined in Definition \ref{PCMcats}.  This is the subject of ongoing work.
\item {\bf The Cauchy product and monoidal structures}
Although we have demonstrated that the Cauchy product is a bifunctor, with interesting embedding properties, we have not yet considered the case where either the base category, or the index category (or both) have a monoidal tensor. This requires studying the monoidal structure of {\bf PCM} (as above) in order to describe what it means for a PCM-category to have a monoidal tensor. This is undoubtedly an interesting route to explore, and is also important for applications to semantics described below.
\item{\bf PCM-categories, algebraic program semantics, and axiomatisations of summation}
From the beginning, the notion of a PCM-category was intended as a unification of the forms of summation used in algebraic program semantics with more analytic notions of summation used in Hilbert and Banach spaces. A very natural question is therefore how much of the traditional theory may be carried through to this more general setting, and whether such constructions as the Elgot dagger, (presumably partial) particle-style categorical traces, etc. may be defined\footnote{As supporting evidence that this is possible, we refer to \cite{HiSc,TCS2} where partial categorical traces, based on notions of summation that do not satisfy positivity (but do, however, satisfy the PCM axioms), are both used to model quantum-optics thought experiments and to construct concrete quantum circuits.}.  Again, the absolute starting point for this is the definition of suitable monoidal tensors, and their interaction with notions of summation. 
\end{itemize}
Note that all the above future directions depend on a detailed study of monoidal tensors and closed structures in both $\PCM$ and members of $\CS$. Thus, it appears that pursuing such questions as pure theory may be the most profitable route!

\section*{Acknowledgements}
The author wishes to acknowledge the contribution of P. Scott (Ottawa) to the definitions and properties of Partial Commutative Monoids, as developed from Section \ref{PCM-start} to Section \ref{PCM-end}, as well as many of the examples presented in Appendix \ref{appendix}. Thanks are also due to S. Abramsky (Oxford) for insisting on the importance of permutation independence (Proposition \ref{welldefined}), and pointers towards the proof of this. Similarly, thanks are due to T. Porter (Wales) for assistance with the monoidal closed structure of the category $\PCM$ (an ongoing project). 

The author also wishes to thank anonymous referees of MSCS for many helpful comments and suggestions that have assisted substantially in refining and clarifying the theory developed, and the editor M. Mislove (Tulane) for a great deal of patience!

\appendix

\section{PCMs and PCM-categories}\label{appendix}
We consider various examples of both PCMs and PCM-categories, as defined in Definitions \ref{PCM} and \ref{PCMcats} respectively. We also compare with other axiomatisations of summation from the field of algebraic program semantics.

\subsection{Examples of PCMs from algebraic program semantics}

Both {\em $\Sigma$-monoids}, and {\em partially additive monoids}, as introduced in \cite{EMDB,MA} and used in \cite{HA,AHS,HS}, may be given as special cases of PCMs: 
\begin{definition}\label{sigmamon}{\em ($\Sigma$-monoids, Partially additive monoids)}\\ 
A PCM $(M,\Sigma )$ is called a 
{\bf $\Sigma$-monoid} when it satisfies the following additional axiom: 
\begin{itemize}
\item The {\bf (full) Partition-Associativity Axiom.} Let $\{x_i\}_{i\in I}$ be a countably indexed
family, and let $\{I_j\}_{j\in J}$ be a countable partition of $I$. Then $\{ x_i\}_{i\in
I}$ is summable if and only if $\{ x_i\}_{i\in I_j}$ is summable for every $j\in J$, and
$\{ \sum_{i\in I_j}x_i \}_{j\in J}$ is summable, in which case
\[ \sum_{i\in I} x_i = \sum_{j\in J} \left( \sum_{i\in I_j} x_i \right) \]
\end{itemize}
Note that this is a special case of the {\em weak partition-associativity axiom}, with a two-way, instead of a one-way, implication. 

A $\Sigma$-monoid is called a {\bf Partially Additive Monoid} (PAM) when it satisfies the following additional axiom:
\begin{itemize}
\item The {\bf Limit Axiom.} Given $\{ x_i\}_{i\in I}$, a countably indexed family where$\{
x_i\}_{i\in F}$ is summable for every finite
$F\subseteq I$, then $\{x_i\}_{i\in I}$ is summable.
\end{itemize}  
\end{definition}

\noindent
The following are Partially Additive Monoids, and are therefore examples of PCMs:
\begin{itemize}
\item {\em Partial functions, with the usual summation}\\
An indexed family of partial functions $\{ f_i : X\rightarrow Y \}_{i\in I}$  is {\bf summable} exactly when $dom(f_i)\cap dom (f_j) = \emptyset$ for all $i\neq j$. The {\bf sum} is given by: \\ 
$ \left( \sum_{i\in I} f_i \right) (x) = \left\{ \begin{array}{lr} f_{i}(x) & x\in dom(f_i) \\ \mbox{undefined} & \mbox{otherwise} \end{array}\right.$
\item {\em Relations, with set-theoretic union}\\
Any indexed family of relations $\{ R_i :X\rightarrow Y \}_{i\in I}$ is {\bf summable}, and the {\bf sum} is simply set-theoretic union.
\item {\em Partial injective functions}\\
The following distinct summations both give a PAM structure to hom-sets of partial injective functions:
\begin{itemize}
\item {\em The disjointness summation}
An indexed family of partial functions $\{ f_i : X\rightarrow Y \}_{i\in I}$  is {\bf disjointness-summable} exactly when $dom(f_i)\cap dom (f_j) = \emptyset$ for all $i\neq j$. The {\bf sum} is given by: \\ 
$ \left( \sum_{i\in I} f_i \right) (x) = \left\{ \begin{array}{lr} f_{i}(x) & x\in dom(f_i) \\ \mbox{undefined} & \mbox{otherwise} \end{array}\right.$
\item {\em The overlap summation}
An indexed family of partial functions $\{ f_i : X\rightarrow Y \}_{i\in I}$  is {\bf overlap-summable} exactly when $x\in dom(f_i)\cap dom (f_j) \ \Rightarrow \ f_i(x) =f_j(x)$, for all $i,j\in I$. The {\bf sum} is as given above.
\end{itemize}
\end{itemize}
The following example is not a partially additive monoid, but is a $\Sigma$-monoid, and thus also an example of  a PCM:
\begin{itemize}
\item {\em Absolute convergence on positive cones}\\ 
We refer to \cite{PS} for categories of positive cones, and summation based on the usual summation of positive elements in finite-dimensional vector space.
\end{itemize}

\noindent
Given our stated aim of unifying notions of summation from both analysis and algebraic program semantics, both $\Sigma$-monoids, and Partially Additive Monoids have undesirable properties for our purposes. The {\em limit axiom} is clearly undesirable for any example based on real or complex numbers: all finite families of complex numbers  are summable, but the same is certainly not true (as the limit axiom would imply) for arbitrary countably infinite families.

The full partition-associativity axiom is also undesirable for slightly more subtle reasons, as the following proposition (taken from \cite{MA}) demonstrates: 
\begin{proposition}\label{positive}
Let $(M ,\Sigma )$ be a $\Sigma$-monoid, and let $X=\{ x_i\}_{i\in I}$ be a summable family of $M$ satisfying $\sum_{i\in I} x_i=0$. Then $x_i = 0$ for all $i\in I$.
\end{proposition}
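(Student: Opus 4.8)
The plan is to exploit the \emph{full} partition-associativity axiom (the two-way implication available in a $\Sigma$-monoid, but not in a general PCM) applied to a cleverly chosen partition of a cleverly chosen summable family. Fix a summable family $X=\{x_i\}_{i\in I}$ with $\sum_{i\in I}x_i=0$, and fix an arbitrary index $k\in I$. The key idea is to build a new summable family indexed by $I\times\Nat$ (or some countable set) in which the block corresponding to $k$ sums to $x_k$ and the remaining blocks recombine to give $0$ again, so that $x_k$ itself must equal $\sum\{\,\}=0$. Concretely, since $X$ is summable and $\sum X=0$, by Sums of Zeros (Proposition~\ref{pcmsums}) the family $\{y_n\}_{n\in\Nat}$ with $y_1=\sum_{i\in I}x_i$ and $y_n=0$ for $n>1$ is summable with the same sum $0$; but $\sum_{i\in I}x_i=0$, so one can iterate $X$ against itself. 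More precisely I would consider the family $Z$ indexed by $\Nat\times I$ given by $Z_{(n,i)}=x_i$ for every $n$, and show it is summable: partition $\Nat\times I$ into the columns $\{n\}\times I$, each of which is a copy of $X$ (hence summable with sum $0$), and the column-sums form the constantly-zero $\Nat$-indexed family, which is summable by Proposition~\ref{pcmsums}; now apply the \emph{backward} direction of full partition-associativity to conclude $Z$ is summable with $\sum Z=0$.

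Having established that $Z=\{x_i\}_{(n,i)\in\Nat\times I}$ is summable, I would repartition $\Nat\times I$ in a second way so as to isolate a single $x_k$ together with a countable supply of copies of the whole family $X$. For instance, choose a partition of $\Nat\times I$ whose first block is the singleton $\{(1,k)\}$ and whose remaining blocks are arranged so that they reassemble into countably many disjoint copies of $X$ (this is possible because $\Nat\times I$ is countable and removing one point from it still admits a partition into $\aleph_0$ copies of $I$, using that $I$ is countable). By the \emph{forward} direction of full partition-associativity, the block-sums are summable; the first block-sum is $x_k$ by the Unary Sum axiom, and each remaining block-sum is $\sum X=0$. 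Hence by weak partition-associativity $0=\sum Z = x_k + 0 + 0 + \cdots = x_k$, again using Sums of Zeros. Since $k\in I$ was arbitrary, $x_i=0$ for all $i\in I$.

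The main obstacle is purely combinatorial/set-theoretic: arranging the second partition of $\Nat\times I$ so that, after deleting a single point, the rest genuinely decomposes into countably many bijective copies of $I$ with the indexing matching up so the block-sums are literally $\sum_{i\in I}x_i$ (here Proposition~\ref{welldefined} on invariance under re-indexing is what licenses identifying each block-sum with $\sum X$). One must also be slightly careful that the partition is \emph{countable} and that empty blocks, if any are introduced, are harmless --- which they are, by the footnote to the Weak Partition-Associativity axiom and by Sums of Zeros. Everything else is a routine application of the two axioms plus the basic consequences already proved in Proposition~\ref{pcmsums}; the only genuinely new ingredient beyond the PCM axioms is the backward implication in full partition-associativity, which is exactly the extra strength a $\Sigma$-monoid has and which a general PCM lacks --- consistent with the Remark that this positivity property is precisely what we do \emph{not} want to assume in the main development.
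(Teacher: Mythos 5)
Your argument is correct and is essentially the paper's own proof: both are the regrouping swindle that uses the backward implication of full partition-associativity to make countably many copies of $X$ summable, then repartitions to isolate a single $x_k$ against a tail of zeros (with Proposition~\ref{welldefined} and the sums-of-zeros property doing the same work in each). The paper merely packages this as the telescoping computation $x_i = x_i + 0 + 0 + \cdots = x_i + (\sum Y + x_i) + (\sum Y + x_i) + \cdots = (x_i + \sum Y) + (x_i + \sum Y) + \cdots = 0$ with $Y=\{x_j\}_{j\neq i}$, which is exactly your two partitions of $\Nat\times I$ (columns versus a singleton plus transversals) written in equational form.
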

\begin{proof}
For some $i\in I$, we define $Y=\{ x_j\}_{j\neq i\in I}$, so $x_i + \sum Y = 0 = \sum Y + x_i$ by weak partition associativity. Then by the full partition-associativity axiom, 
\begin{eqnarray*}
 x_i & = & x_i + 0 + 0 + 0 + \ldots  \quad \mbox{by 
 Proposition \ref{pcmsums}}\\
  & = & x_i + (\sum Y + x_i) + (\sum Y + x_i) + (\sum Y + x_i) + \ldots \\
  & & ( \mbox{by full partition
 associativity})\\
 & = & (x_i + \sum Y) + (x_i + \sum Y) + (x_i + \sum Y) + \ldots \\
 & = & 0 + 0 + \cdots ~ = ~ 0 , \mbox{by 
 Proposition \ref{pcmsums}} \end{eqnarray*}
 Hence $x_i=0$. However, as $i$ was chosen arbitrarily,
$x_k=0$ for all $k\in I$. 
\end{proof}

\subsection{Positivity, computation, and the PCM axioms}
From a certain point of view, positivity seems to be a natural property of notions of summation used in theoretical computer science. Taking the `sum' of a family of arrows in a category is often interpreted in a very domain-theoretic manner, as looking at the total information provided by all these arrows. The challenge to this intuition comes from the field of quantum computation, where summing amplitudes leads to both constructive and destructive interference effects. From \cite{DEL}

\begin{quotation} Amplitudes are complex numbers and may cancel each other, which is referred to as destructive interference, or enhance each other, referred to as constructive interference. The basic idea of quantum computation is to use quantum interference to amplify the correct outcomes and to suppress the incorrect outcomes of computations.
\end{quotation}

From this point of view, at least, enforcing positivity in models of quantum computation would seem to rule out the phenomena that distinguish quantum computation. A good example is provided by the quantum Fourier transform  (required in, for example, Shor's algorithm \cite{Shor} and quantum period-finding generally \cite{NC}), which is based on group homomorphisms $\chi:{\Bbb Z}_n \rightarrow \mbox{\bf Hilb}(H,H)$ satisfying $\sum_{j=0}^{n-1}\chi(j)= 0_H$. Clearly, assuming positivity will only allow for the trivial homomorphism.

Finally, we refer to \cite{TCS2} for an application of category theory to quantum circuits that relies on both summing linear maps, and composition based on convolved (i.e. Cauchy) products.

\subsection{Non-positive examples of PCMs}\label{analyticexamples}
The proof of positivity for Sigma monoids given in Proposition \ref{positive} does {\em not} apply to general PCMs, as it depends on the two-way implication in the (full) partition-associativity axiom. We give various examples of PCMs that need not be either Partial Additive Monoids or Sigma-monoids. Many of these are based on the theory of Cauchy sequences  \cite{EH,T}, and various analytic notions of summability, such as absolute convergence of real or complex sums. 

\begin{definition}\label{realsums} Let $ \sum_{j=0}^\infty 
 a_j$ be a formal (i. e. not necessarily convergent) series of real numbers. The $n^{th}$ partial sum is  
defined by $A_n=\sum_{j=0}^n a_j$. When $lim_{n\rightarrow \infty}(A_n)$ exists, then the infinite series is said to {\bf converge}. Note that convergence is {\em permutation-dependent}. Let $ \sum_{j=0}^\infty 
 a_j$ and $\sum_{k=0}^\infty b_k$, be series satisfying $b_k=a_{\sigma(j)}$, for some  permutation $\sigma: \Bbb N \rightarrow \Bbb N$. Then the convergence of $ \sum_{j=0}^\infty 
 a_j$ is not enough to guarantee the convergence of  $\sum_{k=0}^\infty b_k$. 
 
 A convergent series $ \sum_{j=0}^\infty 
 a_j$ is said to {\bf converge absolutely} when it satisfies the additional property that the non-negative series $ \sum_{j=0}^\infty |
 a_j|$ is convergent.  Alternatively, a convergent series $ \sum_{j=0}^\infty 
 a_j$  is said to be {\bf permutation-independent} when $\sum_{i=0}^\infty a_{\sigma(i)}$ converges for arbitrary permutations $\sigma:{\Bbb N} \rightarrow {\Bbb N}$. 
 \end{definition}
 The following is straightforward:
 \begin{lemma}The real line, with summation defined by convergence, is {\em not} a PCM.\end{lemma}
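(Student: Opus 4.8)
The plan is to exhibit a single summable-in-the-naive-sense family whose sum depends on the order of summation, thereby contradicting the permutation-independence forced on PCMs by Proposition \ref{welldefined}. First I would make precise what "summation defined by convergence" means: a countably indexed family $\{a_i\}_{i\in I}$ of reals, with $I$ countably infinite, would be declared summable precisely when, under some (equivalently, the obvious) enumeration of $I$ as $\Bbb N$, the sequence of partial sums converges, with $\Sigma$ returning that limit. (One must note that to even define $\Sigma$ this way requires a choice of enumeration, which is already a warning sign.)

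Next I would invoke Proposition \ref{welldefined}: if $(\Bbb R,\Sigma)$ were a PCM, then for any bijection $\phi:I\to J$ with $y\circ\phi = x$, the family $y$ is summable exactly when $x$ is, with equal sums. Specialising to $I=J=\Bbb N$ and $\phi$ an arbitrary permutation $\sigma:\Bbb N\to\Bbb N$, this says: summability and the value of the sum are invariant under all permutations. The contradiction then comes from the classical Riemann rearrangement phenomenon recalled in Definition \ref{realsums}: there exist conditionally convergent series, e.g. the alternating harmonic series $\sum_{n\ge 1} \frac{(-1)^{n+1}}{n}$, which converge under the identity enumeration but which can be rearranged to converge to a different value (or to diverge). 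Pick such a series together with a rearrangement $\sigma$ altering the sum; then $x = \{(-1)^{n+1}/n\}_{n\in\Bbb N}$ and $y = \{(-1)^{\sigma(n)+1}/\sigma(n)\}_{n\in\Bbb N}$ satisfy $y\circ\sigma = x$, both are "summable" in the convergence sense, yet $\Sigma(x)\neq\Sigma(y)$, contradicting Proposition \ref{welldefined}.

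I should decide between two slightly different failure modes and state which I am using: either (a) use a rearrangement that changes the limit, directly violating the "equal sums" clause, or (b) use a rearrangement that makes the partial sums diverge, violating the "summable exactly when" clause. Option (a) is cleaner and I would lead with it, remarking that (b) also works. It is also worth a one-line remark that this already fails at the level of the Weak Partition-Associativity axiom rather than needing the full strength of Proposition \ref{welldefined} — splitting the alternating harmonic series into its positive-index and negative-index subseries yields two subseries each of which diverges to $\pm\infty$, so WPA fails outright; but the Proposition \ref{welldefined} route is the most economical single contradiction.

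The main obstacle — really the only subtle point — is pinning down the intended definition of the putative summation operation so that the argument is not vacuous: one wants "summation defined by convergence" to mean something order-sensitive (ordinary sequential convergence), not something already order-independent like declaring summable only the absolutely convergent families. The lemma is precisely the observation that the naive, order-sensitive reading does not give a PCM; so the proof should open by fixing that reading explicitly, and then everything else is the short Riemann-rearrangement contradiction sketched above. No delicate estimates are required — the classical rearrangement theorem, cited via \cite{EH} or \cite{T}, supplies the needed series and permutation off the shelf.
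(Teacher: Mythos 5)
Your proof is correct and follows the same route as the paper: the paper's own argument is simply to invoke Proposition \ref{welldefined} (permutation independence of PCM summation) against the permutation dependence of ordinary convergence already noted in Definition \ref{realsums}. You merely make this explicit by supplying the alternating harmonic series and a sum-altering rearrangement as a concrete witness, which is a fine (and slightly more self-contained) presentation of the identical idea.
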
\begin{proof} From Proposition \ref{welldefined}, summation in a PCM must satisfy permutation independence.\end{proof}
 
 A standard result of analysis is that for real and complex numbers, absolute convergence is equivalent to permutation-independence\footnote{We emphasise that this is specific to the real and complex planes, and finite-dimensional spaces. In more general settings,  general these are distinct concepts. In particular, infinite-dimensional Banach spaces or abstract topological groups provide counterexamples, as discussed following Definition \ref{banachsums}.}.
Real numbers with summation defined by absolute convergence then provides our first example of a PCM not satisfying the positivity property.

\begin{proposition}
The real number line $\Bbb R$, together with summation defined by absolute convergence, satisfies the axioms for a PCM.
\end{proposition}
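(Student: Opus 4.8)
The plan is to verify the two PCM axioms directly, taking as the definition of summability of a countably indexed family $\{a_i\}_{i\in I}$ of reals that the associated series converges absolutely (equivalently, by the standard result cited, that it converges unconditionally — i.e.\ under every enumeration of $I$ — to the same value), and taking $\sum_{i\in I} a_i$ to be this common limit. The key point is that absolute convergence is exactly the right notion to make the weakened, one-directional partition-associativity axiom hold while avoiding the positivity trap that Proposition \ref{positive} exposes for the two-directional version.

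First I would dispose of the \textbf{Unary Sum axiom}: a one-term series $a_{i'}$ trivially converges absolutely with sum $a_{i'}$, so singletons are summable and sum to their unique entry. Next, for \textbf{Weak Partition Associativity}, suppose $\{a_i\}_{i\in I}$ is summable, so $\sum_{i\in I}|a_i| = S < \infty$, and let $\{I_j\}_{j\in J}$ be a countable partition of $I$ (with possibly empty cells). For each $j$, the family $\{a_i\}_{i\in I_j}$ has $\sum_{i\in I_j}|a_i| \le S$ by monotonicity of sums of nonnegatives over subsets, hence is absolutely convergent; write $s_j = \sum_{i\in I_j} a_i$. Then $|s_j| \le \sum_{i\in I_j}|a_i|$, and $\sum_{j\in J}|s_j| \le \sum_{j\in J}\sum_{i\in I_j}|a_i| = \sum_{i\in I}|a_i| = S$ (the middle equality is the corresponding statement for nonnegative series, which holds unconditionally with values in $[0,\infty]$), so $\{s_j\}_{j\in J}$ is absolutely convergent. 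It remains to check $\sum_{j\in J} s_j = \sum_{i\in I} a_i$; for this I would invoke unconditional convergence to reduce to the case $I = \Nat$, choose an enumeration of $\Nat$ that lists $I_1$, then $I_2$, etc.\ (interleaving to handle infinitely many infinite cells, using that tails of an absolutely convergent series are small), and pass to the limit — this is the familiar Fubini-type rearrangement theorem for absolutely convergent double series.

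The main obstacle is precisely this last limit computation: handling the case where both $J$ is infinite and infinitely many of the cells $I_j$ are themselves infinite. One cannot simply concatenate the enumerations of the $I_j$ into a single sequence. The clean way around it is to fix $\varepsilon > 0$, pick a finite $F \subseteq I$ with $\sum_{i\in I\setminus F}|a_i| < \varepsilon$, note $F$ meets only finitely many cells $I_{j_1},\dots,I_{j_m}$, and compare both $\sum_{i\in I}a_i$ and $\sum_{j\in J}s_j$ to the finite quantity $\sum_{k=1}^m \big(\text{partial sum of } a_i \text{ over } F\cap I_{j_k}\big)$, bounding all the discarded terms by $\varepsilon$ using the absolute-convergence tails established above; letting $\varepsilon \to 0$ gives equality. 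Finally I would remark that the permutation-independence required by Proposition \ref{welldefined} is built into the definition (absolute convergence is permutation-independent), so there is nothing further to check, and that no positivity property is imposed — e.g.\ $1 + (-1) = 0$ with neither summand zero — consistent with the discussion preceding Proposition \ref{positive}.
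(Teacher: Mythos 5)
Your proof is correct, but it takes a different route from the paper: where you verify weak partition associativity from first principles (extracting the subseries bounds, establishing absolute convergence of the family of cell-sums via the nonnegative interchange, and then closing the gap between $\sum_{j\in J} s_j$ and $\sum_{i\in I} a_i$ with the finite-set/$\varepsilon$-tail comparison), the paper simply observes that the unary sum axiom is trivial and cites the classical splitting theorem for absolutely convergent series -- the Hille quotation reproduced in Remark \ref{hilleQuote} (\cite{EiHi}, p.~108) and Theorem 8 of \cite{KK} -- which is precisely the statement of weak partition associativity for this summation. Your version buys self-containedness and makes explicit the only genuinely delicate point (a partition with infinitely many infinite cells, where naive concatenation of enumerations fails and the tail estimate is what saves the argument); the paper's version buys brevity and situates the proposition as an instance of a standard result rather than re-proving it. Your closing observation that positivity fails (e.g.\ $1+(-1)=0$) is not needed for the proposition itself, but it is consistent with, and indeed the point of, the surrounding discussion in Appendix \ref{appendix}.
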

\begin{proof} It is a triviality that the unary sum axiom is satisfied. To see that the weak partition associativity axiom is also satisfied, we refer to \cite{EiHi}, p. 108 (also quoted in Remark \ref{hilleQuote} of this paper). See also Theorem 8 of \cite{KK}, p. 84.
\end{proof}

The following corollary is then immediate:
\begin{corol}
the complex plane $\Bbb C$, together with the summation defined by absolute convergence, satisfies the PCM axioms.
\end{corol}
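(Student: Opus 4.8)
The plan is to reduce everything to the real case established in the preceding Proposition, by decomposing complex numbers into real and imaginary parts. Write each member of a countably indexed family $\{z_i\}_{i\in I}$ of complex numbers as $z_i = a_i + \sqrt{-1}\,b_i$ with $a_i,b_i\in\Bbb R$. The elementary inequalities $|a_i|,|b_i|\leq |z_i|\leq |a_i|+|b_i|$ show that $\sum_{i\in I}|z_i|$ converges if and only if both $\sum_{i\in I}|a_i|$ and $\sum_{i\in I}|b_i|$ converge. Hence $\{z_i\}_{i\in I}$ is summable in $\Bbb C$ (i.e.\ absolutely convergent) exactly when $\{a_i\}_{i\in I}$ and $\{b_i\}_{i\in I}$ are both summable in $\Bbb R$, and in that case $\sum_{i\in I} z_i = \left(\sum_{i\in I} a_i\right) + \sqrt{-1}\left(\sum_{i\in I} b_i\right)$, since the $n^{th}$ partial sum of $\{z_i\}$ is $A_n + \sqrt{-1}\,B_n$ and the continuous $\Bbb R$-linear maps $\Re,\Im$ commute with limits.

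With this equivalence in hand, the two PCM axioms are immediate. The unary sum axiom is a triviality: a singleton family $\{z_{i'}\}$ is absolutely convergent with sum $z_{i'}$. For weak partition associativity, suppose $\{z_i\}_{i\in I}$ is summable in $\Bbb C$ and $\{I_j\}_{j\in J}$ is a countable partition of $I$. By the equivalence above, $\{a_i\}_{i\in I}$ and $\{b_i\}_{i\in I}$ are summable in $\Bbb R$; applying the weak partition-associativity axiom for $\Bbb R$ (valid by the preceding Proposition) to each of these yields that every $\{a_i\}_{i\in I_j}$ and $\{b_i\}_{i\in I_j}$ is summable, that $\{\sum_{i\in I_j} a_i\}_{j\in J}$ and $\{\sum_{i\in I_j} b_i\}_{j\in J}$ are summable, and that $\sum_{i\in I} a_i = \sum_{j\in J}\sum_{i\in I_j} a_i$, similarly for $b$. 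Reassembling real and imaginary parts via the equivalence, each $\{z_i\}_{i\in I_j}$ is summable, the family $\{\sum_{i\in I_j} z_i\}_{j\in J}$ is summable, and $\sum_{i\in I} z_i = \sum_{j\in J}\sum_{i\in I_j} z_i$, as required.

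There is essentially no obstacle here; the only point needing (minor) care is the \emph{two-sided} nature of the equivalence in the first step --- that absolute summability of $\{z_i\}$ both implies and is implied by absolute summability of the two coordinate families --- which is precisely what allows the one-directional WPA axiom for $\Bbb R$ to be transported to $\Bbb C$. One could phrase this more structurally by observing that $\Bbb C$ with absolute-convergence summation is the coordinatewise PCM on $\Bbb R^2$, but since the product of PCMs per se has not been formalised in this paper I would present the direct argument above to keep the proof self-contained; alternatively one may simply cite \cite{KK}, which treats the Banach-space case directly.
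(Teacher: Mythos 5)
Your proof is correct, and it follows essentially the route the paper intends: the paper simply declares the complex case ``immediate'' from the preceding real proposition, and your real/imaginary decomposition (with the two-sided equivalence of absolute summability of $\{z_i\}$ and of its coordinate families) is exactly the detail being elided. Nothing further is needed.
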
  
 
The above results may be extended to finite-dimensional Hilbert and Banach spaces, with no substantial obstacles.  However, it is more satisfactory to consider summation in the general setting, and restrict to these as special cases. As a preliminary, we need additional analytic notions of summation. The following definition is taken from \cite{MD}:

\begin{definition}\label{banachsums} Let $X$ be an arbitrary Banach space. A series $\sum_{i=0}^\infty x_i$ is said to {\bf converge absolutely} when  $\sum_{i=0}^\infty \| x_i\| <\infty$. Alternatively, is is said to {\bf converge unconditionally} when the series $\sum_{j=0}^\infty x_{\sigma (j)}$ converges for arbitrary permutations $\sigma : {\Bbb N}\rightarrow {\Bbb N}$. (Unconditional summability can simply be thought of as permutation-independent convergence, in a more general setting.) It is standard that for any unconditionally convergent series, all rearrangements have the same sum;  also, every subseries of an unconditionally convergent series is itself unconditionally convergent \cite{BT}.
\end{definition}

\begin{theorem}\label{finitedim}
In an arbitrary Banach space, absolute convergence implies unconditional convergence, but the converse is not generally true. However, in finite-dimensional Banach spaces, absolute convergence and unconditional convergence are equivalent.
\end{theorem}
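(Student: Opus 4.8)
The plan is to establish the three assertions in turn, each resting on a different classical fact, and then combine them.

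\emph{Absolute convergence implies unconditional convergence, in any Banach space $X$.} This is the standard completeness argument. Given $\sum_{i=0}^\infty \|x_i\| < \infty$, fix $\varepsilon > 0$ and choose $N$ with $\sum_{i \geq N}\|x_i\| < \varepsilon$. For an arbitrary permutation $\sigma$ of $\mathbb{N}$, choose $K$ so large that $\{0,\dots,N-1\} \subseteq \{\sigma(0),\dots,\sigma(K-1)\}$; then for $m > n \geq K$ the partial sums of $\sum_i x_{\sigma(i)}$ satisfy $\big\| \sum_{i=n}^{m} x_{\sigma(i)} \big\| \leq \sum_{i \geq N}\|x_i\| < \varepsilon$, so the rearranged series is Cauchy and hence converges by completeness of $X$ (this may also be cited directly from \cite{MD}).

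\emph{The converse fails in general.} It suffices to produce one infinite-dimensional Banach space containing a series that converges unconditionally but not absolutely; I would take $X = \ell^2$ with standard orthonormal basis $(e_n)_{n\geq 1}$ and the series $\sum_n \tfrac{1}{n} e_n$. Here $\sum_n \|\tfrac1n e_n\| = \sum_n \tfrac1n = \infty$, so the series is not absolutely convergent. On the other hand, for any permutation $\sigma$ and any block $n \leq i \leq m$, orthogonality gives $\big\|\sum_{i=n}^m \tfrac{1}{\sigma(i)} e_{\sigma(i)}\big\|^2 = \sum_{i=n}^m \tfrac{1}{\sigma(i)^2}$; since $\{\sigma(n),\dots,\sigma(m)\}$ avoids any fixed finite set once $n$ is large, this is dominated by a tail of the convergent series $\sum_n \tfrac{1}{n^2}$, so every rearrangement is Cauchy and converges. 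Hence $\sum_n \tfrac1n e_n$ is unconditionally but not absolutely convergent. (If one wants the sharper fact that this occurs in \emph{every} infinite-dimensional Banach space, one invokes the Dvoretzky--Rogers theorem, see \cite{KK}; but the explicit example is all the statement requires.)

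\emph{Equivalence in finite dimensions.} Let $X$ be finite-dimensional. Since all norms on $X$ are equivalent, ``$\sum_i \|x_i\| < \infty$'' and ``unconditional convergence'' are each independent of the chosen norm, so we may identify $X$ with $\mathbb{R}^n$ carrying the norm $\|x\|_1 = \sum_{j=1}^n |x^{(j)}|$. The implication absolute $\Rightarrow$ unconditional is the first part above; for the converse, suppose $\sum_i x_i$ converges unconditionally. Each coordinate projection $\pi_j : \mathbb{R}^n \to \mathbb{R}$ is continuous and linear, so each real series $\sum_i x_i^{(j)}$ converges unconditionally, and by the Riemann rearrangement theorem an unconditionally convergent series of real numbers converges absolutely, i.e.\ $\sum_i |x_i^{(j)}| < \infty$ for each $j$. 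Summing over the $n$ coordinates gives $\sum_i \|x_i\|_1 = \sum_{j=1}^n\big(\sum_i |x_i^{(j)}|\big) < \infty$, which is absolute convergence in $X$. The only non-elementary step is the failure of the converse --- one must exhibit \emph{some} space where the two notions genuinely differ (or cite a theorem producing one); I expect that to be the main obstacle, as everything else reduces to completeness of the ambient space or to the one-dimensional Riemann rearrangement theorem applied coordinatewise.
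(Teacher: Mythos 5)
Your proof is correct, but it takes a different route from the paper in the simplest possible sense: the paper does not prove this theorem at all, it merely cites it as a standard result of analysis (Theorem 1.3.3 of \cite{KK}), whereas you give a self-contained argument. Your three steps are all sound: the Cauchy-tail-plus-completeness argument for absolute $\Rightarrow$ unconditional is the standard one (and is indeed in \cite{MD}); the example $\sum_n \tfrac1n e_n$ in $\ell^2$ correctly witnesses the failure of the converse, and you are right that the theorem as stated only asks that the converse fail \emph{somewhere}, so the explicit orthonormal-basis example suffices and the Dvoretzky--Rogers theorem (which gives such a series in \emph{every} infinite-dimensional space) is not needed; and the finite-dimensional equivalence via norm-equivalence, continuity of the coordinate projections, and the one-dimensional Riemann rearrangement theorem applied coordinatewise is a clean and complete reduction. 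What your approach buys is an elementary, verifiable proof whose only nontrivial external input is the scalar rearrangement theorem; what the paper's citation buys is brevity and a pointer to the sharper infinite-dimensional theory (rearrangements in Banach spaces, Dvoretzky--Rogers) that the paper does not need. One stylistic remark: in your $\ell^2$ argument it is worth saying explicitly, as you do implicitly, that given $\varepsilon$ one first picks $N$ with $\sum_{k>N}k^{-2}<\varepsilon^2$ and then $K$ so that $\{\sigma(0),\dots,\sigma(K-1)\}\supseteq\{1,\dots,N\}$, mirroring your step (1); as written the phrase ``avoids any fixed finite set once $n$ is large'' is correct but compressed.
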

\begin{proof} This is a standard result of analysis -- see, for example,  \cite{KK}, Theorem 1.3.3
\end{proof}

\begin{definition}\label{subseries}
Let $X$ be an arbitrary Banach space. A series $\sum_{i=0}^\infty a_i$ is {\bf subseries convergent} when the partial sums $A_n = \sum _{j=0}^n \alpha(j) a_j$ form a Cauchy sequence, for arbitrary choice of $\alpha : \Bbb N \rightarrow \{ 0,1 \}$. Subseries convergence is often defined informally as `each subseries of $\sum_{i=0}^\infty a_i$ converges'.
\end{definition}
In Banach spaces, subseries convergence provides a nice characterisation of unconditional convergence, as the following classic theorem demonstrates:
\begin{proposition}
Let $X$ be an arbitrary Banach space. A series $\sum_{i=0}^\infty x_i$ is unconditionally convergent if and only if it is subseries convergent.
\end{proposition}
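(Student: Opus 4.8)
The plan is to prove the two implications separately; the forward direction is nearly free given facts already recorded in the paper, while essentially all of the content sits in the converse.

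For the direction \emph{unconditional $\Rightarrow$ subseries}, I would simply invoke the fact noted immediately after Definition \ref{banachsums} (and attributed there to \cite{BT}) that every subseries of an unconditionally convergent series is itself unconditionally convergent, hence convergent. Spelling this out: given $\alpha:\Nat\rightarrow\{0,1\}$, the partial sums $A_n=\sum_{j=0}^n\alpha(j)x_j$ are exactly the partial sums of the subseries on the index set $\alpha^{-1}(1)$, which converges; and a convergent series in a complete space has Cauchy partial sums. So $\sum_i x_i$ is subseries convergent.

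For \emph{subseries $\Rightarrow$ unconditional}, I would argue by contradiction. Assume $\sum_i x_i$ is subseries convergent but some rearrangement $\sum_j x_{\sigma(j)}$ fails to converge. Then, by completeness of $X$, its partial sums $S_n=\sum_{j=0}^n x_{\sigma(j)}$ are not Cauchy, so I can extract $\varepsilon>0$ and indices $p_1<q_1\le p_2<q_2\le\cdots$ with $\bigl\|\sum_{j=p_k+1}^{q_k}x_{\sigma(j)}\bigr\|\ge\varepsilon$ for all $k$. Setting $F_k=\{\sigma(j):p_k<j\le q_k\}$ yields pairwise-disjoint finite subsets of $\Nat$ with $\bigl\|\sum_{i\in F_k}x_i\bigr\|\ge\varepsilon$. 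The crucial manoeuvre is then to thin out the sequence $(F_k)$ so that the blocks become \emph{order-separated}, $\max F_k<\min F_{k+1}$: this is possible precisely because $\sigma$ is a bijection, so only finitely many $j$ satisfy $\sigma(j)\le N$ for any fixed $N$, and therefore, after any finite stage, all sufficiently late windows $\{\sigma(j):p<j\le q\}$ lie entirely above the previously chosen blocks while still containing one of norm $\ge\varepsilon$. Having arranged this, let $G=\bigcup_k F_k$ and $\alpha=\mathbf{1}_G$; order-separation forces $G\cap[\min F_k,\max F_k]=F_k$, so the partial sums $A_n=\sum_{j=0}^n\alpha(j)x_j$ increase by exactly $\sum_{i\in F_k}x_i$ across the $k$th block and hence are not Cauchy, contradicting subseries convergence. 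Therefore every rearrangement of $\sum_i x_i$ converges, i.e.\ the series is unconditionally convergent.

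The step I expect to be the real obstacle is this passage to order-separated blocks. Without it, the natural candidate subseries $\sum_{i\in G}x_i$ need not ``see'' the quantities $\|\sum_{i\in F_k}x_i\|$ as increments of its natural-order partial sums, because the $F_k$ can be interleaved inside $\Nat$; the bijectivity of $\sigma$ is exactly what lets one separate them, and once separated the contradiction is pure bookkeeping. An alternative packaging of the same point would route through the net/Cauchy characterisation of unconditional convergence (for every $\varepsilon$ there is a finite $F_0$ with $\|\sum_{i\in F}x_i\|<\varepsilon$ whenever $F$ is finite and $F\cap F_0=\emptyset$), which isolates the identical combinatorial difficulty; I would mention it as a remark but carry out the block argument explicitly.
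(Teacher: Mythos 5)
Your proof is correct, but it takes a genuinely different route from the paper: the paper does not argue this proposition at all, deriving it as a corollary of the classic result of \cite{WO} and pointing to \cite{MD} and \cite{BLPD} for proofs, whereas you give an explicit argument. Your converse direction is sound and is the real content: extracting $\varepsilon$ and disjoint blocks from the non-Cauchy rearranged partial sums, and then the inductive thinning to order-separated blocks (valid because $\{j:\sigma(j)\le N\}$ is finite for every $N$, so late windows of the rearrangement lie entirely above any previously chosen $F_k$) is exactly the combinatorial heart of the classical proof, and the bookkeeping with $G=\bigcup_k F_k$ and $\alpha=\mathbf{1}_G$ correctly contradicts subseries convergence in the sense of Definition~\ref{subseries}. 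Two caveats. First, your forward direction is a quotation rather than a proof: the fact you invoke (every subseries of an unconditionally convergent series converges, recorded after Definition~\ref{banachsums} and attributed to \cite{BT}) \emph{is} that implication, so a fully self-contained treatment would need a symmetric block argument (from a subseries with non-Cauchy partial sums, build a divergent rearrangement by placing the offending blocks consecutively); this is acceptable here only because the paper itself records the fact as standard. Second, in that direction note explicitly that $A_n=\sum_{j=0}^n\alpha(j)x_j$ is the subseries partial-sum sequence with repetitions inserted (or eventually constant when $\alpha^{-1}(1)$ is finite), so its Cauchyness is equivalent to that of the subseries. What your approach buys is a self-contained proof that shows precisely where completeness and the bijectivity of $\sigma$ enter; what the paper's citation buys is brevity and a pointer to the stronger classical results of which this equivalence is a special case.
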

\begin{proof}
This is a corollary of the classic result of  \cite{WO}. We refer to \cite{MD} for a (English language) textbook proof, and \cite{BLPD} for a nice elementary proof.\end{proof}  

Note that the above result depends on the sequential completeness of Banach spaces, and thus in more general settings, subseries-convergence and unconditional convergence are not equivalent concepts. In particular, conditions equivalent to\footnote{We refer to \cite{CMA} for many conditions equivalent to subseries-convergence, including the definitions of \cite{IG}.} subseries convergence were introduced in \cite{IG} as `strong unconditional convergence'.

\begin{corol} Absolutely convergent series in finite-dimensional Banach spaces are subseries-convergent.\end{corol}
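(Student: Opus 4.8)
The plan is to chain together two results already established in the excerpt. The target statement is: \emph{absolutely convergent series in finite-dimensional Banach spaces are subseries-convergent.} First I would invoke Theorem~\ref{finitedim}, which tells us that in a finite-dimensional Banach space absolute convergence and unconditional convergence coincide; hence any absolutely convergent series $\sum_{i=0}^\infty x_i$ in such a space is unconditionally convergent. Second, I would apply the proposition immediately preceding this corollary (the Orlicz-type characterisation), which states that in an arbitrary Banach space a series is unconditionally convergent if and only if it is subseries convergent. Composing these, an absolutely convergent series in a finite-dimensional Banach space is unconditionally convergent, hence subseries convergent, which is exactly the claim.

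The only things to check are that the hypotheses line up: Theorem~\ref{finitedim} requires finite-dimensionality (which we have), and the unconditional$\,\Leftrightarrow\,$subseries proposition requires only that the ambient space be a Banach space (finite-dimensional Banach spaces are Banach spaces, so this is automatic). No computation is needed; the argument is a two-step implication.

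I do not anticipate a genuine obstacle here — the corollary is a direct consequence of the two cited results and is presumably stated mainly for the record, to make explicit that finite-dimensional settings (in particular $\mathbb{R}$ and $\mathbb{C}$, and finite-dimensional Hilbert spaces such as those arising in quantum computation) furnish PCMs via absolute/subseries convergence. If one wanted to be fully self-contained one could instead argue directly that absolute convergence of $\sum_i x_i$ forces $\sum_j \alpha(j)\|x_j\| \le \sum_i \|x_i\| < \infty$ for every $\alpha:\mathbb{N}\to\{0,1\}$, so each subseries is absolutely convergent and therefore Cauchy by completeness; but going through unconditional convergence as above is the cleanest route given what has been proved.
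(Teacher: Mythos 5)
Your proof is correct and follows essentially the same route as the paper, whose proof simply cites Theorem~\ref{finitedim}; you merely make explicit the appeal to the preceding proposition (unconditional $\Leftrightarrow$ subseries convergence) that the paper leaves implicit, and your remark that absolute convergence implies unconditional convergence without finite-dimensionality is also accurate.
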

\begin{proof}This follows from Theorem \ref{finitedim} above.\end{proof}
As may be expected, subseries-convergent series satisfy the weak partition associativity axiom. To prove this, we first need some unsurprising technical results.
\begin{definition} Let $\sum_{j=0}^\infty y_j$ be an infinite series, and $\sum_{i=0}^\alpha x_i$ be a finite or infinite series in some Banach space $X$. We say that  $\sum_{j=0}^\infty y_j$ is a 0-padding of $\sum_{i=0}^\alpha x_i$ when there exists some injection $\eta:\{0,\ldots \alpha\} \rightarrow {\Bbb N}$ such that for all $j\in {\Bbb N}$, 
\[ y_j =\left\{\begin{array}{lr} x(i) & j=\eta(i) \\ 0 & \mbox{otherwise.}\end{array}\right. \]
\end{definition}

\begin{lemma}\label{technical}Let  $\sum_{i=0}^\alpha x_i$ be a finite or countably infinite subseries-summable series in some Banach space $X$  and let $\sum_{j=0}^\infty y_j$ be a zero-padding of $\sum_{i=0}^\infty x_i$. Then $\sum_{j=0}^\infty y_j$ is subseries convergent and $\sum_{j=0}^\infty y_j = \sum_{i=0}^\alpha x_i$.
\end{lemma}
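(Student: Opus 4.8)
The plan is to reduce the claim to the rearrangement-invariance of unconditionally convergent series. First I would fix an arbitrary selector $\beta:\Nat\rightarrow\{0,1\}$ (in the sense of Definition \ref{subseries}) for the series $\sum_{j=0}^\infty y_j$ and observe that, since $y_j=0$ whenever $j$ lies outside the image of the defining injection $\eta$, the only possibly-nonzero terms of the subseries $\sum_{j=0}^\infty\beta(j)y_j$ occur at indices $j=\eta(i)$, where the term equals $\beta(\eta(i))x_i$. Thus $\sum_{j=0}^\infty\beta(j)y_j$ is a $0$-padding of $\sum_{i=0}^\alpha\beta(\eta(i))x_i$, and the latter is a subseries of $\sum_{i=0}^\alpha x_i$ (select index $i$ precisely when $\beta(\eta(i))=1$), hence subseries convergent by Definition \ref{subseries}, hence unconditionally convergent by the Proposition following that definition; write $S$ for its sum.

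Next I would examine the partial sums $B_n=\sum_{j=0}^n\beta(j)y_j$. Since $\eta$ is injective, increasing $n$ by one introduces at most one new nonzero term, namely $\beta(n)x_i$ when $n=\eta(i)$, and $B_n=B_{n-1}$ for every $n\notin\operatorname{im}(\eta)$. Enumerating $\{0,\dots,\alpha\}$ as $i^{(1)},i^{(2)},\dots$ in increasing order of $\eta$-value, the subsequence $\bigl(B_{\eta(i^{(k)})}\bigr)_k$ is precisely the sequence of partial sums of the rearrangement $\sum_k\beta(\eta(i^{(k)}))x_{i^{(k)}}$ of $\sum_{i=0}^\alpha\beta(\eta(i))x_i$; by unconditional convergence this rearrangement converges, and to the same sum $S$. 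As $B_n$ is constant on each block of indices strictly between consecutive $\eta(i^{(k)})$ (and eventually constant in the finite case), the full sequence $(B_n)_n$ converges to $S$, so in particular it is Cauchy. Since $\beta$ was arbitrary, $\sum_{j=0}^\infty y_j$ is subseries convergent; and taking $\beta\equiv 1$ identifies $S=\sum_{i=0}^\alpha x_i$, which is the asserted equality.

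The genuine difficulty is that $\eta$ need not be monotone, so $\sum_{j=0}^\infty\beta(j)y_j$ is a $0$-padding of a \emph{rearrangement} of a subseries of $\sum_{i=0}^\alpha x_i$ rather than of an honest subseries; this is exactly why plain convergence of $\sum_{i=0}^\alpha\beta(\eta(i))x_i$ does not suffice, and one must route through unconditional (equivalently, subseries) convergence and its invariance under rearrangement. Everything else — the block decomposition of $\Nat$ induced by $\operatorname{im}(\eta)$, the harmlessness of the interspersed zeros, and the degenerate finite case — is routine bookkeeping that I would dispatch briefly.
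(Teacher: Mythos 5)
Your proof is correct, and it follows the same overall outline as the paper's own argument (fix an arbitrary selector $\beta$, show the partial sums of $\sum_j \beta(j)y_j$ are Cauchy, then take $\beta\equiv 1$ to identify the sums), but the key step is handled differently — and more carefully. The paper's proof simply asserts that since the partial sums of $\sum_i x_i$ form a Cauchy sequence, so do those of $\sum_j y_j$; this transfer is only immediate when the padding injection $\eta$ is monotone, since otherwise the partial sums of the padded series correspond to sums of $x_i$ over finite sets $\{i : \eta(i)\le m\}$ that need not be initial segments, i.e.\ to partial sums of a \emph{rearrangement} of a subseries. You identify exactly this issue and resolve it by noting that the selected subseries $\sum_i \beta(\eta(i))x_i$ is itself subseries convergent, hence unconditionally convergent by the Orlicz equivalence quoted in the paper, so its rearrangement determined by the increasing enumeration of the image of $\eta$ converges to the same sum $S$; the block-constancy of the partial sums $B_n$ between consecutive points of the image then gives convergence of the whole sequence to $S$. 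What your route buys is validity for arbitrary (not necessarily order-preserving) injections $\eta$, which is what the paper's definition of $0$-padding actually allows; what the paper's terser argument buys is brevity, at the cost of silently eliding this point (and note that one cannot simply replace $\eta$ by the increasing enumeration of its image, since that changes $\sum_j y_j$ into a rearrangement of itself and so already requires the invariance you invoke).
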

\begin{proof} Let us assume that $\sum_{i=0}^\infty x_i$ is an infinite series, otherwise the result is trivial.  Consider arbitrary  $\beta : \Bbb N \rightarrow \{ 0,1 \}$, along with the sum   $\sum_{j=0}^\infty \beta(j)y_j$, and let $\eta:{\Bbb N}\rightarrow {\Bbb N}$ be the embedding satisfying $y_j =\left\{\begin{array}{lr} x(i) & j=\eta(i) \\ 0 & \mbox{otherwise.}\end{array}\right. $. Then since the partial sums $\left\{ \sum_{i=0}^n x_i\right\}_{n\in {\Bbb N}}$ form a Cauchy sequence so do the partial sums $\left\{ \sum_{j=0}^m y_j\right\}_{m\in {\Bbb N}}$. Thus, as $\beta : \Bbb N \rightarrow \{ 0,1 \}$ was chosed arbitrarily, we deduce that $\sum_{j=0}^\infty y_j$ is subseries convergent, as required. The equivalence of the two sums is then immediate.
\end{proof}

This technicality then allows us to appeal to standard results, in order to demonstrate that subseries-convergent series in Banach spaces satisfy the Weak Partition Associativity axiom.
\begin{theorem} Let $\sum_{i=0}^\infty x_i$ be a subseries-convergent series in an arbitrary Banach space $X$, and let 
$\{I_j\}_{j\in J}$ be a countable partition of $I$. Then 
$\{ x_i\}_{i\in I_j}$ is subseries-convergent, as is $\{ \sum_{i\in I_j}x_i
\}_{j\in J}$, and $ \sum_{i\in I} x_i = \sum_{j\in J} \left(\sum_{i\in I_j} x_i \right)$.
\end{theorem}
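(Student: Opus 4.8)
The plan is to reduce everything to the equivalence already established above (via Orlicz's theorem) that in a Banach space a series is subseries-convergent if and only if it is unconditionally convergent, and then to invoke the standard Cauchy criterion for unconditional convergence: $\sum_{i\in I}x_i$ converges unconditionally precisely when, for every $\varepsilon>0$, there is a finite $F_0\subseteq I$ with $\bigl\|\sum_{i\in F}x_i\bigr\|<\varepsilon$ for every finite $F\subseteq I$ disjoint from $F_0$.

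First I would handle the individual cells. Fix $j\in J$. If $I_j$ is finite (in particular empty) there is nothing to prove. If $I_j$ is denumerable, choose a bijection $I_j\cong\mathbb N$ and let $\alpha_j:\mathbb N\to\{0,1\}$ be the indicator of the image of $I_j$ inside a fixed enumeration of $I$; the partial sums $\sum_{k\le n}\alpha_j(k)x_k$ are Cauchy by subseries-convergence of $\sum_i x_i$, so $s_j:=\sum_{i\in I_j}x_i$ exists. Lemma \ref{technical} (zero-padding) identifies this with the genuine series over $I_j\cong\mathbb N$, and since any subseries of $\{x_i\}_{i\in I_j}$ is again a subseries of $\{x_i\}_{i\in I}$, the family $\{x_i\}_{i\in I_j}$ is itself subseries-convergent, which is claim (1).

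Next I would establish the single identity $\sum_{j\in K}s_j=\sum_{i\in I_K}x_i$ for every $K\subseteq J$, where $I_K:=\bigcup_{j\in K}I_j$. Granting this, $\{x_i\}_{i\in I_K}$ is a subseries of $\{x_i\}_{i\in I}$, hence convergent, so $\{s_j\}_{j\in J}$ is subseries-convergent (claim (2)); and taking $K=J$ yields the weak partition-associativity equation (claim (3)). To prove the identity I would run an $\varepsilon/3$ argument: given $\varepsilon>0$, take the finite $F_0\subseteq I$ from the Cauchy criterion; since $J$ is countable, enumerate it so that the finitely many cells meeting $F_0$, say $I_{j_1},\dots,I_{j_m}$, come first, and for each of these choose (using convergence of $s_{j_\ell}$) a finite $G_\ell\subseteq I_{j_\ell}$ beyond which the tail of $s_{j_\ell}$ has norm $<\varepsilon/m$; then compare a partial sum of $\sum_{j\in K}s_j$ with a partial sum of $\sum_{i\in I_K}x_i$ over a suitable matching finite index set, bounding the discrepancy by the leftover tails together with finitely many ``disjoint-from-$F_0$'' blocks, each controlled via the Cauchy criterion. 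Alternatively this cell-by-cell regrouping is a standard fact for unconditionally convergent series and can simply be cited (e.g.\ \cite{KK}).

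The main obstacle is precisely this last step, namely that an unconditionally convergent series may be summed ``cell by cell'' over a countable partition; everything else is bookkeeping on top of the already-proved equivalence with unconditional convergence and Lemma \ref{technical}. This grouping step carries the genuine analytic content, is the Banach-space analogue of the Hille quotation in Remark \ref{hilleQuote}, and is exactly where the sequential completeness of $X$ is used.
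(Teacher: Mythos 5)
Your argument is correct, but it is not the route the paper takes. The paper's proof is essentially a citation: the regrouping statement for partitions into countably infinite cells is taken directly from Lemma 2 of \cite{BT}, and the only work done in the paper is the observation that finite cells can be accommodated by zero-padding (Lemma \ref{technical}) so that \cite{BT} applies verbatim. You instead build a self-contained proof on top of the equivalence, already established via Orlicz's theorem, between subseries convergence and unconditional convergence: you use the finite-set Cauchy criterion for unconditional convergence to (i) get convergence of each cell sum $s_j$ (which, as in the paper, also needs the zero-padding lemma to identify the padded subseries with the genuine series over $I_j$), (ii) show that only finitely many cells meet the critical finite set $F_0$, so that the family $\{s_j\}_{j\in J}$ again satisfies the Cauchy criterion, and (iii) run an $\varepsilon$-comparison of partial sums to get $\sum_{j\in J}s_j=\sum_{i\in I}x_i$; your fallback of citing \cite{KK} for the cell-by-cell regrouping is closer in spirit to what the paper actually does. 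The trade-off: the paper's proof is shorter and delegates the analytic content to \cite{BT}, at the cost of being opaque about where completeness and unconditionality enter; your version makes that content explicit and keeps the argument internal to the machinery the paper has already set up, at the cost of some routine bookkeeping (the matching of finite index sets in the $\varepsilon$-argument, and the tacit use of the standard Cauchy-criterion characterisation of unconditional convergence) that you sketch rather than carry out. Both establish the theorem; yours would lengthen the appendix but remove the reliance on the implicit finite-cell assumption in \cite{BT} that the paper has to patch by hand.
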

\begin{proof} This result is proved, for partitions into countably infinite sets, in Lemma 2 of \cite{BT}.  The case where certain of these partitions are finite appears to be implicitly assumed in \cite{BT} -- for a formal justification, we may consider zero-padding the original sequence, to replace finite subsums by infinite sums with a finite number of non-zero summands, and appealing to Lemma \ref{technical} above.
\end{proof}

\noindent
We may now list a number of PCMs that do not, or are not required to, satisfy the positivity condition. 
\begin{enumerate}
\item {\em Absolute convergence of real or complex numbers}  Absolute convergence of countable sums in the real or complex plane is a motivating example for the theory of PCMs and PCM-categories. It arises as a special case of absolute convergence in finite-dimensional Hilbert spaces, as below:
\item {\em Absolute convergence in finite-dimensional Hilbert spaces} All finitely indexed families are summable, with the usual summation. A countably indexed family $\{ \psi_i \}_{i\in {\Bbb N}}$ is {\bf summable} exactly when the sequence $\{ \sum_{i=1}^n \| \psi_i \| \}_{n\in {\Bbb N}}$ is a Cauchy sequence.
\item {\em Subseries-summable summation in arbitrary Banach spaces} All finitely indexed families are summable, with the usual summation. A countably indexed family $\{ b_i \}_{i\in {\Bbb N}}$ is {\bf summable} exactly when the series $\sum_{i=0}^\infty b_i$ is subseries-summable in the sense of Definition \ref{subseries}, in which case the sum is the limit of the Cauchy sequence $\{ \sum_{i=1}^n b_i  \}_{n\in {\Bbb N}}$.
\item {\em The unit ball summation in finite-dimensional Banach spaces} Let ${\mathcal B}$ be a finite-dimensional  Banach space, and denote the unit ball by $Ball({\mathcal B}) =\{ b\in {\mathcal B}: \| b \| \leq 1\}$. An indexed family $\{ b_i \}_{i\in I}$ is summable exactly when $\sum_{i\in I} \| b_i \|\leq 1$, in which case its sum is the usual Banach space summation.
\item {\em Any abelian monoid} Given an abelian monoid $(M,+,0_M)$, then the following are distinct PCM-structures: 
\begin{itemize}
\item {\em The finite families summation}  An indexed set $\{  m_i\}_{i\in I}$ is summable exactly when the subfamily of non-zero elements $\{ m_j \}_{j\in J\subseteq I}$ is a finite family. The sum is defined by 
\[ \sum_{i\in I} m_i = \left\{ \begin{array}{clr} \sum_{j\in J} m_j & J \neq \{ \} \\ & \\ 0_M & \mbox{otherwise,} \end{array}\right. \]
where the finitary sum on the r.h.s. is the usual sum of the abelian group.
\item {\em The $K$-bounded summation} As above, but where the summable families are those with at most  $K$ non-zero elements.
\end{itemize}
It is almost immediate from the commutativity and associativity of composition in $M$ that these both satisfy the PCM axioms.
\item {\em Any abelian group}, with either of the above the finite families summation, or $K$-bounded summation, as a special case of the abelian monoid examples.
\end{enumerate}

\subsection{Examples of PCM-categories}
A PCM-category is defined in Definition \ref{PCMcats} to be a category $\CC$ where each hom-sets has a specified PCM structure, together with the {\em strong distributivity} axiom that connects composition and summation. This states that, given summable families $\{ g_j \in \CC(Y,Z)\}_{j\in I}$ and $\{ f_i \in \CC(X,Y) \}_{i\in I}$, then $\{ g_jf_i\in \CC(X,Z) \}_{(j,i)\in \CC(X,Z)}$ is summable and 
\[ \left( \sum_{j\in J} g_j \right) \left( \sum_{i\in I} f_i \right) \ = \ \sum_{(j,i)\in J\times I } g_jf_i \]

\begin{itemize}
\item {\em The real or numbers, with multiplication and absolute convergence}\\ 
 Given two absolutely convergent sums of real or complex numbers, $\sum_{i\in I} r_i $ and $\sum_{j\in J} s_j$, then by definition of absolute convergence, $\sum_{(i,j)\in I\times J} r_i s_j$ exists, and 
 \[ \left( \sum_{i\in I} r_i \right) \left( \sum_{j\in J} s_j\right) =\sum_{(i,j)\in I\times J} r_is_j \] 
 Therefore, $({\Bbb R},\times )$ or $({\Bbb C},\times)$, with this indexed summation, is a one-object PCM-category.
 \item {\em Linear maps on finite-dimensional Hilbert space, with composition and uniform convergence} \\
 This follows similarly to the above examples. Note that the hom-set of maps between two finite-dimensional Hilbert spaces is itself a finite-dimensional Hilbert space, and the distinct notions of convergence with respect to various operator norms all coincide in the finite-dimensional case. The strong distributivity property is a classic result of analysis (see, for example \cite{CS} for a general setting).
\item {\em Any ring, with the finite families summation}\\ 
 Let $(R,\times,+)$ be a ring. Then $(R,\times )$ is a monoid, and hence a one-object category. Its unique homset (i.e. the elements of $R$) is an abelian monoid, $(R,+)$. Hence we may take the the finite families summation $\Sigma_{<\infty}$ of Section \ref{analyticexamples} above to get the PCM $(R,\Sigma_{<\infty})$. Given two summable families, $\{ s_j \}_{j\in J}$ and $\{ r_i\}_{i\in I}$, then the family $\{ s_j r_i \}_{(j,i)\in J\times I}$ has a finite number of non-zero elements, and hence is summable. Given the summability of the required families, the identity 
\[ \left( \sum_{j\in I} s_j \right) \left( \sum _{i\in I} r_i \right) = \sum_{(j,i)\in J\times I} s_jr_i \]
is then straightforward from the definition of summation in terms of the addition in the ring $(R,\times ,+)$.
\end{itemize}

Note that, simply because the homsets of a category are PCMs, we no not necessarily have a PCM-category --- we also need the strong distributivity condition of Definition \ref{PCMcats}. For example, consider a unital ring $(R,\times,+,1,0)$. The multiplicative monoid $(R,\times,1)$  is trivially a one-object category, and as demonstrated in Section \ref{analyticexamples}, we may give the additive abelian monoid $(R,+,0)$  a PCM structure using the  K-bounded summations $\Sigma_{\leq K}$ where a family is summable exactly when it has no more than $K$ non-zero elements. However, for $K>1$, the K-bounded summation does {\em not} in general make $(R,\times , \Sigma_{\leq K})$ a one-object PCM category. Let us assume that $R$ has no zero-divisors, and consider two summable families containing $K$ non-zero elements $\{ s_j \}_{j\in J}$ and $\{ r_i \}_{i\in I}$. Then the strong distributivity law does not hold, since $\{ s_jr_i \}_{(j,i)\in J\times I}$ is not a summable family, as it contains $K^2>K$ non-zero elements.\\

We now demonstrate that there is a whole class of examples to be found within the field of algebraic program semantics. The proofs that these are PCM-categories arises from the following straightforward result:
\begin{proposition}\label{sdfromfpa}
Let $\CC$ be a category, together with, for all $X,Y\in Ob(\CC)$ a function $\Sigma^{(X,Y)}$ from indexed families over $\CC(X,Y)$ to $\CC(X,Y)$ such that:
\begin{enumerate}
\item $\left( \CC(X,Y), \Sigma^{(X,Y)} \right)$ is a PCM satisfying the additional {\em full partition-associativity axiom} of Definition \ref{sigmamon}. 
\item  The usual left and right distributivity conditions as satisfied:  that is, given a summable family $\{ g_i\in \CC(B,C) \}_{i\in I}$ and arbitrary $f\in \CC(A,B)$ and $h\in \CC(C,D)$, then the families $\{ hg_i\in \CC(B,D) \}_{i\in I}$ and $\{ g_if\in \CC(A,C) \}_{i\in I}$ are summable and 
\[ h\left( \sum_{i\in I} g_i \right) \ =\  \sum_{i\in I} (hg_i) \ \ \mbox{ and } \ \ \left( \sum_{i\in I} g_i \right)   f \ = \ \sum_{i\in I} (g_if) \] 
\end{enumerate}
then $\left( \CC,\Sigma^{(\underline{\ },\underline{\ })}\right)$ satisfies the strong distributivity condition of Definition \ref{PCMcats}, and thus is a PCM-category.
\end{proposition}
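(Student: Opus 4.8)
The plan is to assemble the strong distributivity identity out of the two one-sided distributivity laws, using the \emph{backward} direction of the full partition-associativity axiom as the glue that upgrades ``each row is summable and the row-sums are summable'' into ``the whole doubly-indexed family is summable''. So I would fix a summable family $\{f_i \in \CC(X,Y)\}_{i\in I}$ and a summable family $\{g_j \in \CC(Y,Z)\}_{j\in J}$ and work towards summability of $\{g_j f_i\}_{(j,i)\in J\times I}$ together with the product formula.

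First, for each fixed $i\in I$, apply the right distributivity hypothesis to the summable family $\{g_j\}_{j\in J}$ and the single arrow $f_i$: this yields that $\{g_j f_i\}_{j\in J}$ is summable and $\sum_{j\in J} g_j f_i = \left(\sum_{j\in J} g_j\right) f_i$. Next, set $g := \sum_{j\in J} g_j \in \CC(Y,Z)$ and apply the left distributivity hypothesis to the summable family $\{f_i\}_{i\in I}$ and the arrow $g$: this yields that $\{g f_i\}_{i\in I}$ is summable and $\sum_{i\in I} g f_i = g\left(\sum_{i\in I} f_i\right) = \left(\sum_{j\in J} g_j\right)\left(\sum_{i\in I} f_i\right)$. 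Combining with the previous step, which identifies $g f_i$ with $\sum_{j\in J} g_j f_i$, the family $\left\{\sum_{j\in J} g_j f_i\right\}_{i\in I}$ is summable.

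Now consider the partition of $J\times I$ into the cells $\{(j,i) : j\in J\}$ indexed by $i\in I$. We have just shown that every cell-family $\{g_j f_i\}_{j\in J}$ is summable and that the family of cell-sums $\left\{\sum_{j\in J} g_j f_i\right\}_{i\in I}$ is summable. Invoking the backward implication of the full partition-associativity axiom (Definition \ref{sigmamon}), the whole family $\{g_j f_i\}_{(j,i)\in J\times I}$ is summable, and
\[ \sum_{(j,i)\in J\times I} g_j f_i = \sum_{i\in I}\left(\sum_{j\in J} g_j f_i\right) = \left(\sum_{j\in J} g_j\right)\left(\sum_{i\in I} f_i\right), \]
which is exactly the strong distributivity condition of Definition \ref{PCMcats}.

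The one genuinely load-bearing ingredient is that full partition-associativity is a two-way implication: weak partition-associativity alone would let us pass from a summable doubly-indexed family to its rows and row-sums, but not back, so it could not establish summability of $\{g_j f_i\}_{(j,i)\in J\times I}$ from the one-sided laws — this is precisely where the hypothesis that $\CC(X,Z)$ is a $\Sigma$-monoid rather than merely a PCM is used. Everything else is routine bookkeeping. I would also sanity-check the degenerate cases where $I$ or $J$ is empty or a singleton, but these reduce immediately to the unary sum axiom and the zero-arrow facts from Corollary \ref{zeroarrows}, so I do not anticipate any obstruction there.
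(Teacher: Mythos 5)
Your proof is correct and follows essentially the same route as the paper's: use the one-sided distributivity laws to express the product of the two sums as an iterated sum, then invoke the backward implication of full partition-associativity to obtain summability of the doubly-indexed family $\{g_jf_i\}_{(j,i)\in J\times I}$ and the product identity. The only cosmetic differences are the order in which you apply the two distributivity laws and that the paper additionally records the other iterated order and cites Proposition \ref{welldefined} for the re-indexing of each cell $\{(j,i):j\in J\}$ by $J$, a step you make implicitly.
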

\begin{proof}
Let us write $f=\sum_{i\in I} f_i$ and $g=\sum_{j\in J} g_j$. Then, 
By left-distributivity, and the existence of $\sum_{i\in I} f_i$, we deduce $gf=\sum_{i\in I} gf_i$. However, $g=\sum_{j\in J} g_j$. Therefore, by full partition-associativity, $gf = \sum_{i\in I} \left( \sum_{j\in J} g_jf_i \right)$. 
Using the right distributivity law and full partition-associativity, $gf = \sum_{j\in J} \left( \sum_{i\in I} g_jf_i \right)$. 
Again by full partition-associativity, and Proposition \ref{welldefined}, we may replace the doubly-indexed sum by a single indexed sum, giving  $gf=\sum_{(j,i)\in J\times I} g_jf_i$ and hence
\[ gf \ = \ \left( \sum_{j\in J}g_j \right) \left( \sum_{i\in I} f_i \right) \  = \  \sum_{(j,i)\in J\times I} g_jf_i \]
Therefore $\CC$ satisfies strong distributivity, as required.
\end{proof}

\noindent Note that the converse is not true: weak partition-associativity, together with the strong distributivity law, does not, in general, imply the full partition-associativity axiom. This is clear from the failure of positivity in many of the examples given.

 \begin{corol}The Partially Additive Categories (PACs) of \cite{MA} are PCM-categories, as are the Unique Decomposition Categories (UDCs) of  \cite{HA}, \cite{AHS}, \cite{HS}.
 \end{corol}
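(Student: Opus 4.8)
The plan is to unwind the definitions of Partially Additive Categories and Unique Decomposition Categories and, in each case, to verify the two hypotheses of Proposition~\ref{sdfromfpa}: that every hom-set carries a $\Sigma$-monoid structure (in the sense of Definition~\ref{sigmamon}), and that composition on either side satisfies the usual one-sided distributive laws. Once both are checked, Proposition~\ref{sdfromfpa} immediately supplies the strong distributivity axiom, and hence the PCM-category structure, so no further work is needed.

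For Partially Additive Categories in the sense of \cite{MA}, the first step is to recall that by definition such a category is enriched over the category of partially additive monoids: each hom-set $\CC(X,Y)$ is a PAM, and a PAM is by Definition~\ref{sigmamon} a $\Sigma$-monoid. The enrichment condition is precisely that pre- and post-composition by a fixed arrow are PAM-homomorphisms, which unpacks to exactly the left- and right-distributive laws appearing as hypothesis~(2) of Proposition~\ref{sdfromfpa}. Thus both hypotheses hold, and a PAC is a PCM-category.

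For Unique Decomposition Categories in the sense of \cite{HA,AHS,HS}, the second step is to recall the quasi-injection/quasi-projection data: for each finite tensor $X=\bigoplus_k X_k$ one has $\iota_k : X_k \to X$ and $\rho_k : X \to X_k$ with $\rho_k\iota_k = 1$ and $\rho_k\iota_j = 0$ for $j\neq k$, and the induced notion of sum of morphisms (those arising as the components, via these quasi-projections, of a common arrow) makes each hom-set a $\Sigma$-monoid, with composition preserving such sums on both sides. Again both hypotheses of Proposition~\ref{sdfromfpa} are met, and the corollary follows.

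The only real obstacle is bookkeeping rather than mathematics: the precise axioms placed on the hom-set summation differ slightly between \cite{HA}, \cite{AHS} and \cite{HS}, so one must confirm in each formulation that the summation genuinely satisfies the \emph{full} partition-associativity axiom (and not merely its weak form), and that countably infinite index sets are admitted. In each case a countably-indexed sum in a UDC reduces, via the quasi-injection/projection structure, to exactly the $\Sigma$-monoid data analysed in \cite{MA}, so full partition-associativity holds for the reasons given there; the appeal to Proposition~\ref{sdfromfpa} then goes through verbatim.
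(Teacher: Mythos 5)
Your proposal is correct and takes essentially the same route as the paper, which obtains the corollary directly from Proposition \ref{sdfromfpa}: PACs and UDCs are by definition enriched in PAMs (resp.\ $\Sigma$-monoids), so every hom-set satisfies full partition-associativity and composition distributes over sums on both sides. The one small inaccuracy is in your description of UDCs: the $\Sigma$-monoid structure on hom-sets is primitive data of the enrichment (the quasi-injections and quasi-projections merely relate it to the monoidal tensor), not something induced by the quasi-projection components, so the verification you flag in your final paragraph is automatic rather than a point needing to be checked against \cite{HA}, \cite{AHS}, \cite{HS}.
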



\begin{thebibliography}{5}
\bibitem[Abramsky et. al. 2002]{AHS}{S. Abramsky, E. Haghverdi, P. Scott (2002)} Geometry of interaction and linear combinatory algebras, {\em Math. Struct. Comput. Sci. 12 (5)}  625-665 
\bibitem[Abramsky 2005]{SA} {S. Abramsky (2005)} Abstract Scalars, Loops, and Free Traced and Strongly Compact Closed Categories, {\em Springer LNCS Vol. 3629} 1- 29
\bibitem[Bahamonde 1985]{AB} {A. Bahamonde (1985)} Tensor Product of Partially-Additive Monoids, {\em Semigroup Forum (32)} 31-53 
\bibitem[Day 1973]{MD} {M. Day} {\em Normed Linear Spaces} ($3^{rd}$ edition), Springer-Verlag, New York
\bibitem[Deutsch et. al 1999]{DEL}{D. Deutsch, A. Ekert, R. Lupacchini (1999)} Machines, Logic, and Quantum Physics {\em arXiv:math.HO/9911150 v1}
\bibitem[Haghverdi 2000]{HA} {E. Haghverdi (2000)} A categorical approach to linear
logic, geometry of proofs and full completeness, {\em PhD Thesis, Univ. Ottawa}
\bibitem[Haghverdi, Scott 2006]{HS}{E. Haghverdi, P. Scott (2006)} A categorical model for the Geometry of Interaction, {\em Theoretical Computer Science 350(2)} 252-274
\bibitem[Gelfand 1938]{IG} I. Gelfand (1938)  Abstrakte Funktionen und lineare Operatoren {\em Rec. Math. [Mat. Sbornik] N.S.  4(46) vol. 2} 235-286 
\bibitem[Golan 1999]{JG}{J. Golan (1999)} {\em Power Algebras Over Semirings: With Applications in Mathematics and Computer Science}, 
Springer, Mathematics and Its Applications series, Vol. 488
\bibitem[Hille 1982]{EiHi}{Einar Hille (1982)} {\em Analytic Function Theory, Vol. 1 (2nd ed.)} AMS Chelsea publishing
\bibitem[Hines 2008a]{TCS}{P. Hines (2008)(a)} Machine Semantics, {\em Theoretical Computer Science 409(1)} 1-23
\bibitem[Hines 2008b]{IJUC}{P. Hines (2008)(b)} Machine Semantics: from Causality to Computational Models, {\em International Journal of Unconventional Computation 4(3)} 249-272
\bibitem[Hines 2010]{TCS2}{P. Hines (2010)} Quantum Circuit Oracles for Abstract Machine Computations {\em Theoretical Computer Science  411} 1501-1520
\bibitem[Hines, Scott]{HiSc}{P. Hines, P. Scott (2012)} Categorical Traces from Single-Photon Linear Optics {\em in S. Abramsky, M. Mislove (ed.s), AMS Proceedings of Symposia in Applied Mathematics (71)} 89-124
\bibitem[Hobson 1957]{EH}{E. W. Hobson (1957)} {\em The Theory of Functions of a Real Variable \& the Theory of Fourier's Series, Vol.1, $2^{nd}$ Edition}, Dover Publications, New York
\bibitem[Kadets, Kadets 1991]{KK}{ V. Kadets, M. Kadets 1991} {\em Rearrangements of Series in Banach Spaces} Am. Math. Soc., Providence
\bibitem[Kelly 1982]{GMK} {G. M. Kelly (1982)} Basic Concepts of Enriched Category Theory, {\em LMS Lecture notes 64}, Cambridge University Press, {\em Reprinted in \cite{GMK2}}
\bibitem[Kelly 2005]{GMK2} {G. M. Kelly (2005)} Basic Concepts of Enriched Category Theory, {\em Reprints in Theory and Applications of Categories (10)} 
\bibitem[McArthur 1961]{CMA}{C. McArthur (1961)} A note on Subseries Convergence {\em Proc. Am. Math. Soc. (12) 4}  540-545
\bibitem[Manes, Arbib 1986]{MA}  {E. Manes, M. Arbib (1986)} {\em Algebraic Approaches to
Program Semantics}  Springer-Verlag
\bibitem[Manes, Benson 1985]{EMDB}  {E. Manes, D. Benson (1985)} The inverse Semigroup of
a Sum-Ordered Semiring {\em Semigroup Forum 31} 129-152
\bibitem[Lahiri, Das 2002]{BLPD}{Benoy Kumar Lahiri, Pratulananda Das (2002)} Subseries in Banach spaces {\em Mathematica Slovaca (52) 3} 361-368
\bibitem[Laplaza 1977]{LA}{M. L. Laplaza (1977)} Coherence in Nonmonoidal Closed Categories, {\em Trans. Am. Math. Soc.}, V. 230  293-311
\bibitem[Nielsen, Chuang 2000]{NC}  {M. Nielsen, I. Chuang (2000)} {\em Quantum Computation and Quantum Information}, Cambridge University Press
\bibitem[Orlicz 1933]{WO} {W. Orlicz (1933)} \"Uber unbedingte Konvergenz in Functionenraumen I, {\em Studia Math. 4} 33-37
\bibitem[Swartz 1992]{CS}{C. Swartz (1992)} Iterated series and the Hellinger-Toeplitz theorem {\em Publicacions Matem\`atiques 36} 167-173.
\bibitem[Selinger 2004]{PS}{P. Selinger (2004)} Towards a quantum programming language, {\em Mathematical Structures in Computer Science 14(4)} 527-586
\bibitem[Shor 1999]{Shor}{P. Shor (1999)} Polynomial time algorithms for prime factorisation and discrete logarithms on a quantum computer {\em SIAM review 41} 303-332
\bibitem[Steinberger 1993]{MS}{M. Steinberger (1993)} {\em Algebra} Prindle, Weber and Schmidt {\em updated version (2006) available online as http://math.albany.edu/$\sim$mark/algebra.pdf}
\bibitem[Titchmarsh 1983]{T} {E. C.Titchmarsh (1983)} {\em The Theory of Functions, $2^{nd}$ Edition}, Oxford University Press
\bibitem[Thorpe 1968]{BT} {B. Thorpe (1968)} On the equivalence of certain notions of bounded variation {\em Journal London  Math. Soc. 43} 247-252
\end{thebibliography}
\end{document}